%
%
%
%
%

\documentclass[11pt]{amsart}
\usepackage{amsmath,amscd,amssymb,amsfonts,amsthm}

\textwidth15.8 cm
\oddsidemargin.4cm
\evensidemargin.4cm
%
\usepackage[cmtip,matrix,arrow]{xy}


\newtheorem{theorem}{Theorem}[section]
\newtheorem{corollary}[theorem]{Corollary}
\newtheorem{proposition}[theorem]{Proposition}

\newtheorem{lemma}[theorem]{Lemma}
\theoremstyle{definition}
\newtheorem{definition}[theorem]{Definition}
\theoremstyle{remark}

\newtheorem{remark}[theorem]{Remark}
\newtheorem{remarks}[theorem]{Remarks}
\newtheorem{example}[theorem]{Example}

%

\newcommand{\Cour}[1]      {[\![#1]\!]}

\newcommand\G{\mathcal{G}}

\newcommand{\T}{\mathbb{T}}

\newcommand{\ca}{\mathcal}

\newcommand{\E}{\ca{E}}

\newcommand{\R}{\mathbb{R}}
\newcommand{\C}{\mathbb{C}}

\renewcommand{\P}{\mathsf{P}}

\renewcommand{\a}{\mathsf{a}}

\newcommand{\on}{\operatorname}
\newcommand{\Aut}{ \on{Aut} }

\renewcommand{\subset}{\subseteq}

\renewcommand{\ker}{ \on{ker}}



\newcommand\qu{/\kern-.7ex/} 

\newcommand{\hra}{\hookrightarrow}

\renewcommand{\d}{{\mbox{d}}}
\newcommand{\ol}{\overline}

\newcommand{\dd}{\mf{d}}

\newcommand\sig{\sigma}
\newcommand\eps{\epsilon}
\newcommand\Om{\Omega}
\newcommand\om{\omega}

\newcommand{\f}{\frac}

\newcommand{\p}{\partial}
\renewcommand{\l}{\langle}
\renewcommand{\r}{\rangle}
\newcommand\hh{{\f{1}{2}}}

\newcommand{\eeq}{\end{eqnarray*}}
\newcommand{\beq}{\begin{eqnarray*}}

\newcommand{\End}{\on{End}}
\newcommand{\pr}{\on{pr}}

\newcommand{\wt}{\widetilde}
\newcommand{\mf}{\mathfrak}

\newcommand{\rra}{\rightrightarrows}

\newcommand{\aut}{\mf{aut}}
\newcommand{\gau}{\mf{gau}}
\newcommand{\da}{\dasharrow}


\setcounter{tocdepth}{1}

%
%

\begin{document}
\sloppy
\title{Splitting theorems for Poisson and related structures}
\author{Henrique Bursztyn}
\author{Hudson Lima}
\address{Instituto de Matem\'atica Pura e Aplicada,
Estrada Dona Castorina 110, Rio de Janeiro, 22460-320, Brasil }
\email{henrique@impa.br}
\email{hudsonnl@impa.br}
\author{Eckhard Meinrenken}
\address{Department of Mathematics,
University of Toronto,
40 St. George Street,
Toronto, Ontario, M5S 2E4}
\email{mein@math.toronto.edu}

\begin{abstract}
 According to the Weinstein splitting theorem, any Poisson manifold is locally, near any given point, a product of a symplectic manifold with another Poisson manifold whose Poisson structure vanishes at the point. Similar splitting results are known e.g. for Lie algebroids, Dirac structures and generalized complex structures. In this paper, we develop a novel approach towards these results that leads to various generalizations, including their equivariant versions as well as their formulations in new contexts.
\end{abstract}
\maketitle
\section{Introduction}\label{sec:intro}
Poisson manifolds and related geometric structures, such as Lie algebroids, Dirac structures and generalized complex structures, display an intricate local theory. The \emph{splitting theorems} to be discussed in this paper refer to a series of results that provide fundamental local information about these types of geometry.

In each of these contexts, the geometric structure on the given manifold $M$ determines a  generalized foliation of $M$, in the sense of Stefan and Sussmann. While the leaves of such a \emph{foliated manifold} need not be of constant dimension, the Stefan-Sussmann theory shows that they are arranged rather nicely: For every $m\in M$ there is an open neighborhood isomorphic to a product of foliated manifolds  $S\times N$, where $S$ has the trivial foliation (with $S$ itself as its only leaf)
while $N$ contains the point $m$ as a zero-dimensional leaf. The splitting theorems say that, in each case,
one can take this splitting $S\times N$ to be compatible with the given geometric structure. The following are some instances of such results:
\begin{enumerate}
\item
Weinstein's splitting theorem \cite{wei:loc} for \emph{Poisson manifolds} $(M,\pi)$, which  asserts the existence of a neighborhood of $m$ that is Poisson diffeomorphic to a product $(S,\pi_S)\times (N,\pi_N)$, where $\pi_S$ is non-degenerate while $\pi_N$ vanishes at $m$;

\item the splitting theorem for \emph{Dirac manifolds} \cite{cou:di}, obtained by Blohmann \cite{blo:rem} (see also Dufour-Wade \cite{duf:lo} for related results);

\item the splitting theorem for Lie algebroids $E\to M$, due to Dufour \cite{duf:nor}, Fernandes \cite{fer:lie},  and Weinstein  \cite{wei:alm},
which gives an isomorphism near $m$ with a product of Lie algebroids $TS \times F$,
where the anchor of the Lie algebroid $F\to N$ vanishes at $m$;

\item
the splitting theorem for generalized complex manifolds \cite{gua:ge}, due to
Abouzaid-Boyarchenko \cite{abo:lo}, which shows that up to a $B$-field transform, any generalized complex manifold is locally a product $S\times N$ of generalized complex manifolds, where $S$ is `of symplectic type' and $N$ is `of complex type' at $m$.
\end{enumerate}

In this article, we develop a novel approach towards splitting theorems, which allows us to generalize them in various directions and to new contexts. Rather than taking $N$ to be `small', we will allow transverse submanifolds $N\hookrightarrow M$ that may be quite large. Transversality implies that the normal bundle $\nu_N$ inherits  a `linear approximation' of the given geometry. Our local models will give tubular neighborhood embeddings, identifying the geometric structures  over the normal bundle $\nu_N$ and over an open neighborhood of the transversal $N\subset M$. (This is not to be confused with linearization problems around leaves.) In the Poisson case, we recover
the normal form theorem of Frejlich-M\u{a}rcu\c{t} \cite{fre:nor}.

Our main technical tool is a linearization lemma for vector fields $X$ vanishing along submanifolds $N\subset M$, with linear approximation given by the Euler vector field on the normal bundle $\nu_N$. Any such `Euler-like' vector field determines a tubular neighborhood embedding, and the strategy of the proof is to make the vector field,
and hence the tubular neighborhood, compatible with the given geometric data.
A key feature of our approach is that constructions are quite explicit, in the sense that  normal forms are fully determined by some specific choices, with a natural dependence on them. As a result, they have good functorial properties, so that one obtains the $G$-equivariant versions of the normal form theorems without extra effort. We remark that it is unclear how to obtain equivariant splitting theorems from the traditional (induction-based) proofs. Indeed, for Poisson manifolds $(M,\pi)$, a $G$-equivariant Weinstein splitting theorem was only recently proved by Frejlich-M\u{a}rcu\c{t} in \cite{fre:nor}, following partial results in Miranda-Zung \cite{mir:no}. The argument in \cite{fre:nor} towards Weinstein splittings, and more generally normal forms along cosymplectic transversals  $N\subset M$, uses `Poisson sprays' and the approach of Crainic-M\u{a}rcu\c{t} \cite{cra:exi} to symplectic realizations. In contrast, our normal form for  Poisson case is entirely determined by the
choice of a 1-form $\alpha\in \Om^1(M)$ whose image under the map $\pi^\sharp\colon T^*M\to TM$ is an Euler-like vector field along $N$.

The structure of this paper is as follows. In Section \ref{sec:euler} we discuss the linearization of Euler-like vector fields and the resulting tubular neighborhood embeddings.
In Section \ref{sec:anchored} we apply this to anchored vector bundles satisfying an involutivity condition. We obtain a  normal form theorem along transversals, which may be regarded as a version of the Stefan-Sussmann theorem. This is followed by similar transversal normal form theorems for transversals of Lie algebroids in Section \ref{sec:lialg}, and Dirac structures in Section \ref{sec:dirac}, which are new in these
contexts.

From our result for Dirac structures, we derive as direct consequences the transversality
results for Poisson structures in Section \ref{sec:poisson}, and generalized
complex structures in Section \ref{sec:gcs}. Similar results for generalized
complex structures have independently been obtained in recent work of Bailey-Cavalcanti-Duran \cite{bai:blo}, using a different approach.  Our method also leads to new results on transverse normal forms for
Courant algebroids, but this case is less straightforward and  will be
treated separately.

In future work, we plan to generalize some of these techniques to infinite-dimensional settings. Indeed, one of our inspirations was the proof of Frobenius' theorem for Banach manifolds, in the books \cite{ab:ma} and \cite{lan:dif}, and the realization that the geometry behind these proofs involves the flow of an Euler-like vector field. We expect that similar techniques can be used to prove versions of the Stefan-Sussmann theorem and other splitting theorems for infinite-dimensional manifolds.

\vskip.1in
{\bf Acknowledgements.} We would like to thank David Li-Bland, Pedro Frejlich, Ioan M\u{a}rcu\c{t}  and Shlomo Sternberg, for helpful comments on various aspects of our work. We also thank the anonymous referee for his/her comments and suggestions. H.B. and H.L. thank Faperj and CNPq for financial support; E.M. was supported by an NSERC Discovery Grant.

\newpage
\section{Euler-like vector fields and tubular neighborhoods}\label{sec:euler}
\subsection{Notation and conventions}\label{subsec:notation}
For a manifold $M$, we denote by $\on{Diff}(M)$ the group of diffeomorphisms, and by $\mf{X}(M)$ the Lie algebra of vector fields. For a complete vector field $X\in \mf{X}(M)$, we define
its flow to be the 1-parameter group of diffeomorphisms $\Phi_s\in \on{Diff}(M)$ such that
$X(f)=\f{d}{d s}\big |_{s=0} (\Phi_s^{-1})^*f$ for $f\in C^\infty(M)$. The description also holds
for the local flows of incomplete vector fields, taking care of the domains. The flow of a time-dependent vector field $X_s$ is defined in terms of the action on functions by $\f{d}{d s} \Phi_s^*=-\Phi_s^*\circ X_s$ with $\Phi_0=\on{id}$.

Given a vector bundle  $\pr \colon E\to M$, we denote by
$\on{Aut}(E)\subset \on{Diff}(E)$ its group of  automorphisms, and by $\mf{aut}(E)\subset \mf{X}(E)$ the Lie algebra of infinitesimal automorphisms. Any $\wt{\Phi}\in \on{Aut}(E)$
restricts to a diffeomorphism $\Phi\in \on{Diff}(M)$ with $\pr\circ\, \wt{\Phi}=\Phi\circ \pr$
; the kernel of this restriction map  is denoted $\on{Gau}(E)$. Likewise, any $\wt{X}\in\mf{aut}(E)$ restricts to a vector field $X\in\mf{X}(M)$ with
\begin{equation}\label{eq:xlift}
\wt{X}\sim_{\pr} X;
\end{equation}
the kernel of this restriction map is denoted by $\mf{gau}(E)$. According to \cite{gra:hig}, the elements $\wt{\Phi}\in \on{Aut}(E)$ are characterized as those  diffeomorphisms of the total space of $E$ that commute with the action of the group $\R_{>0}$ by scalar multiplication; it is automatic that such a diffeomorphism preserves fibers and is fiberwise linear. Similarly $\mf{aut}(E)$ consists of vector fields on $E$ that are invariant under the action of $\R_{>0}$. Any $\wt{\Phi}\in \Aut(E)$ determines an invertible linear operator $A\colon \Gamma(E)\to \Gamma(E)$, taking $\sigma\colon M\to E$ to  $\wt{\Phi}\circ \sigma\circ \Phi^{-1}$. This has the property
\begin{equation}\label{eq:autprop}
A(f\sigma)=((\Phi^{-1})^*f)\ A(\sigma);
\end{equation}
conversely, any invertible linear operator with this property
corresponds to a unique $\wt{\Phi}\in\Aut(E)$ lifting $\Phi$. Infinitesimally, for  $\wt{X}\in\aut(E)$ one obtains a  `Lie derivative' of sections $D\colon \Gamma(E)\to \Gamma(E)$, with the derivation property
\begin{equation}\label{eq:derprop}
D(f\sigma)=f\,D\sigma+X(f)\sigma
\end{equation}
for all $f\in C^\infty(M)$. Conversely, any linear operator $D$ with this property
corresponds to a unique $\wt{X}\in\mf{aut}(E)$ lifting $X$. If $\wt{X}\in\mf{gau}(E)$ then
the corresponding $D$ is a section of the bundle $E^*\otimes E$ of endomorphisms of $E$.

\begin{example}\label{ex:euler}
Let  $\kappa_t\colon E\to E$ denote scalar multiplication by $t\in \mathbb{R}$, and let
$\E\in\gau(E)$ be the \emph{Euler vector field}.
In local bundle coordinates on $E$, with $x^i$ the coordinates in the fiber direction and $y^j$ those in the base direction,
\[ \E=\sum_i x^i\f{\p}{\p x^i}.\]
The flow of $\E$ is $s\mapsto \kappa_{\exp(-s)}$; hence the endomorphism of $E$ corresponding to $\E$ is $D=\kappa_{-1}$.
\end{example}


\subsection{Normal bundles and linear approximation}


Given a manifold $M$ and a submanifold $N\subset M$, let $\nu(M,N)=TM|_N/TN$
be the normal bundle. We write $\nu_N=\nu(M,N)$ if the ambient manifold is clear. Throughout this paper, $p$ and $i$ will denote the following projection and inclusion:
\begin{equation}\label{eq:pi}
 \xymatrixcolsep{10pt}
\xymatrixrowsep{18pt}
\xymatrix{ \nu(M,N) \ar[d]_p & \\ N\ar[r]_i & M. }
\end{equation}

Given a smooth map of pairs $\varphi\colon (M',N')\to (M,N)$ (that is, $\varphi\colon M'\to M$ is a smooth map with $\varphi(N')\subset N$), one obtains a vector bundle
morphism
\begin{equation}\label{eq:mapofpairs}
\nu(\varphi)\colon \nu(M',N')\to \nu(M,N)
\end{equation}
over $\varphi|_{N'}\colon N'\to N$,
with the obvious functorial property under composition of such maps. If $\varphi$ is transverse to $N$, and $N'=\varphi^{-1}(N)$, then  $\nu(\varphi)$ is a fiberwise isomorphism.

The normal bundle functor is compatible with the tangent functor:  There is a canonical isomorphism
\begin{equation}\label{eq:tnu} \nu(TM,TN)\xrightarrow{\cong} T\nu(M,N) \end{equation}
identifying the structures as vector bundles over $\nu(M,N)$ and also as vector bundles over $TN$.  In other words, \eqref{eq:tnu} is an isomorphism between the following two \emph{double vector bundles}:
\[ \xymatrixcolsep{30pt}
\xymatrixrowsep{18pt}
\xymatrix{{\nu(TM,TN)} \ar[r]^{}\ar[d]_{}    & TN \ar[d]\\
{\nu(M,N)}\ar[r]_{} & N,}
\ \ \  \ \ \ \ \ \ \
\xymatrix{
{T\nu(M,N)} \ar[r]^{\;Tp}\ar[d]_{}    & TN \ar[d]\\
{\nu(M,N)}\ar[r]_{} & N.}
\]
For any map of pairs $\varphi\colon (M',N')\to (M,N)$, the following diagram commutes:
\[ \xymatrixcolsep{30pt}
\xymatrixrowsep{18pt}
\xymatrix{{\nu(TM',TN')} \ar[r]^{\cong}\ar[d]_{\nu(T\varphi)}    & T\nu(M',N') \ar[d]^{T(\nu(\varphi))}\\
{ \nu(TM,TN)}\ar[r]_{\cong} & T\nu(M,N).}
\]
See Appendix \ref{app:tnu}  for a  detailed discussion.

Suppose that $E\to M$ is a vector bundle, and
$\sigma\in \Gamma(E)$ is a smooth section with
$\sigma|_N=0$. Then $\sigma\colon (M,N)\to (E,M)$ induces a vector bundle map
$\nu(\sigma)\colon \nu(M,N)\to \nu(E,M)$. Making use of the natural identification $\nu(E,M)\cong E$,
we obtain a vector bundle map
\begin{equation}
\label{eq:normal}\d^N\sigma\colon \nu(M,N)\to E|_N,
\end{equation}
referred to as the \emph{normal derivative} (or \emph{intrinsic derivative} \cite{gu:gea})
of $\sigma$, since it codifies the
derivative of $\sigma$ in directions normal to $N$. Using a partition of unity, it is clear that every bundle map $\nu(M,N)\to E|_N$ arises in this way, as the normal derivative $\d^N\sigma$ of some section.

For a diffeomorphism $\Phi$ of $M$ preserving $N$, the map $\Phi\colon (M,N)\to (M,N)$  defines the \emph{linear approximation} $\nu(\Phi)\in \on{Aut}(\nu_N)$. Infinitesimally, for a vector field $X\in\mf{X}(M)$ tangent to $N$, the map $X\colon (M,N)\to (TM,TN)$ induces
$\nu(X)\colon \nu(M,N)\to \nu(TM,TN)$. Using the identification
\eqref{eq:tnu} this is a vector field on $\nu_N$, called the  \emph{linear approximation} of $X$:
\begin{equation}\label{eq:linapp}
\nu(X)\in \mf{aut}(\nu_N).
\end{equation}
\begin{remark}\label{rem:linearapprox}
The linear approximation $\nu(X)$ can be viewed in alternative ways:
\begin{enumerate}
\item[(i)] The local flow of $\nu(X)$ is the linear approximation of the local flow of $X$.
\item[(ii)] The operator $D\colon \Gamma(\nu_N)\to \Gamma(\nu_N)$, corresponding
to  $\nu(X)\in\mf{aut}(\nu_N)$ as in Sec.~\ref{subsec:notation},
has the following description:  If $\tau\in \Gamma(\nu_N)$ is represented by a vector field $Y\in \mf{X}(M)$ (modulo a vector field tangent to $N$), then
$D(\tau)$ is represented by the Lie bracket $[X,Y]$.
\item[(iii)]  If  $X|_N=0$, then $\nu(X)\in \mf{gau}(\nu_N)$ is given by $ - \d^N X\colon \nu_N\to TM|_N$, followed by the projection $TM|_N\to \nu_N$.
    \end{enumerate}
\end{remark}

Recall that the \emph{tangent lift} $X_T\in \mf{X}(TM)$ of a vector field $X\in\mf{X}(M)$ is obtained by applying the tangent functor to
$X\colon M\to TM$ (more precisely, $X_T=J\circ TX$, where $J$ is the canonical involution on $TTM$, see Appendix \ref{app:tnu}). Equivalently, its local flow is the differential $T\Phi_s$ of the local flow $\Phi_s$ of $X$.  If $X$ is tangent to $N$, then the infinitesimal version of the identification $\nu(T\Phi_s)=T(\nu(\Phi_s))$
shows that
\begin{equation}\label{eq:xtanlift}\nu(X_T)=\nu(X)_T\end{equation}
as vector fields on $\nu(TM,TN)=T\nu(M,N)$.


\subsection{Tubular neighborhood embeddings}

Let $N\subset M$ be a submanifold, with normal bundle $\nu_N=\nu(M,N)$.
We will work with the following strong notion of tubular neighborhood embeddings.
\begin{definition} A \emph{tubular neighborhood embedding} for $N\subset M$ is an
embedding
\[ \psi\colon \nu_N\to M,\]
taking the zero section of $\nu_N$ to $N$,  and such that the map $\nu(\psi)$ induced by $\psi\colon (\nu_N,N)\to (M,N)$ is the identity map on $\nu_N$.
\end{definition}
Here we are making use of the canonical identification $\nu(\nu_N,N)=\nu_N$
given by the vector bundle structure. Note that some authors only require that $\psi|_N$ is the identity, rather than also the linear approximation $\nu(\psi)$.
A vector field $X$ tangent to $N$ is called \emph{linearizable} if there exists a tubular neighborhood embedding
$\psi$ such that $\nu(X)$ agrees with $\psi^*X$ on a neighborhood of $N$. We will need linearizability for the following special case.
\begin{lemma}\label{lem:linearization}
Suppose that $X|_N=0$, with linear approximation $\nu(X)=\E$ the Euler vector field  on $\nu_N$. Then $X$ is linearizable.
\end{lemma}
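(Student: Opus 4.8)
The plan is to produce $\psi$ as a limit of explicit embeddings manufactured from the flow of $X$ together with the scaling flow of $\E$. Since the assertion is local near $N$, I would begin by invoking the classical tubular neighborhood theorem to fix one auxiliary embedding $\psi_0\colon \nu_N\to M$ with $\nu(\psi_0)=\on{id}$, defined on a neighborhood of the zero section; this $\psi_0$ serves only as a seed and will be corrected. Transporting $X$ back, set $W=\psi_0^*X\in \mf{X}(\nu_N)$; by naturality of the linear approximation one has $W|_N=0$ and $\nu(W)=\E$. Writing $\Psi_s$ for the local flow of $W$, Remark~\ref{rem:linearapprox}(i) gives $\nu(\Psi_s)=\kappa_{e^{-s}}$, so $\Psi_s$ contracts towards $N$ to first order as $s\to+\infty$. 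I then define
\[ \chi_s=\Psi_{-s}\circ\kappa_{e^{-s}}, \]
propose $\chi=\lim_{s\to\infty}\chi_s$, and finally set $\psi=\psi_0\circ\chi$.

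Second, I would check that such a limit does the job. Using $\kappa_{e^{-s}}\circ\kappa_{e^{-t}}=\kappa_{e^{-(s+t)}}$ and the flow property of $\Psi$, one obtains the identity $\chi_s\circ\kappa_{e^{-t}}=\Psi_t\circ\chi_{s+t}$ for all $s,t$; letting $s\to\infty$ yields $\chi\circ\kappa_{e^{-t}}=\Psi_t\circ\chi$. Thus $\chi$ conjugates the flow $\kappa_{e^{-t}}$ of $\E$ to the flow $\Psi_t$ of $W$, which is exactly $\chi_*\E=W$, i.e. $\chi^*W=\E$; hence $\psi^*X=\chi^*\psi_0^*X=\chi^*W=\E$ on a neighborhood of $N$. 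Moreover each $\chi_s$ is already \emph{tubular}: by functoriality of the linear approximation, $\nu(\chi_s)=\nu(\Psi_{-s})\circ\nu(\kappa_{e^{-s}})=\kappa_{e^{s}}\circ\kappa_{e^{-s}}=\on{id}$, since $\kappa_{e^{-s}}$ is itself linear. Consequently $\nu(\chi)=\on{id}$ and $\nu(\psi)=\on{id}$, so $\psi$ is a local diffeomorphism along $N$, hence a tubular neighborhood embedding on a smaller neighborhood.

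The crux, and the step I expect to be the main obstacle, is the convergence and smoothness of $\chi_s$ on a fixed neighborhood of $N$. Here the decisive point is that, because $\nu(W)=\E$ and $W|_N=0$, the difference $Z:=W-\E$ has no scaling-weight-zero component: all of its homogeneous parts (in the fiber coordinates, with $\partial_{x^i}$ of weight $-1$ and $x^i$ of weight $+1$) have strictly positive weight. The map $\chi_s$ first contracts its argument to distance $O(e^{-s})$ from $N$ via $\kappa_{e^{-s}}$ and then expands by $\Psi_{-s}$, whose differential grows like $e^{s}$; the leading linear parts cancel exactly against the scaling, while each positive-weight component of $Z$ is damped by at least one extra factor $e^{-s}$, so that $\tfrac{d}{ds}\chi_s=O(e^{-s})$ uniformly near $N$. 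This makes $s\mapsto\chi_s$ Cauchy in $C^\infty$ with a smooth limit, and a Gronwall-type bound keeps all orbits inside a fixed neighborhood for $s\ge 0$, controlling the domains. Equivalently, one can package this estimate as a Moser-type homological equation $[\E,V_\tau]=Z$ along the path $W_\tau=\E+\tau Z$: since $[\E,-]=\mathcal{L}_\E$ acts on fiberwise-homogeneous vector fields by multiplication by their weight, it is invertible precisely on the positive-weight part where $Z$ lives, which is the algebraic reason the linearization succeeds for the Euler field specifically.
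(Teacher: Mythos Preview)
Your argument is correct, but it takes a genuinely different route from the paper. The paper gives a Moser-type deformation: with $Z=\E-X$ it sets $Z_t=t^{-1}\kappa_t^*Z$, observes that this family extends smoothly to $t=0$ (this is the precise analytic content of your ``positive weight'' observation), and integrates the time-dependent field $Z_t$ over the \emph{finite} interval $[0,1]$. A direct calculation then gives $\varphi_1^*X=\E$. No limits are taken and no growth estimates are needed beyond the Taylor-expansion check that $Z_t$ is smooth at $t=0$, so the proof is short and self-contained.

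Your conjugacy-limit construction $\chi_s=\Psi_{-s}\circ\kappa_{e^{-s}}$ is the classical Sternberg-style approach; the intertwining identity $\chi_s\circ\kappa_{e^{-t}}=\Psi_t\circ\chi_{s+t}$ and the verification $\nu(\chi_s)=\on{id}$ are correct, and the limit (when it exists) does linearize $W$. What this route costs is exactly what you flag as the main obstacle: you must show that $\Psi_{-s}$ is defined on the range of $\kappa_{e^{-s}}$ for all $s\ge 0$ over a fixed neighborhood of $N$, and then prove $C^\infty$-convergence of $\chi_s$ via a Gronwall estimate. This can be carried out, but it is substantially more work than the paper's finite-time argument. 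Incidentally, the alternative you sketch in your last sentence---solving the homological equation $[\E,V_\tau]=Z$ along the path $\E+\tau Z$---is essentially the paper's proof, so you already have the cleaner version in hand.
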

\begin{proof}
By choosing an initial tubular neighborhood embedding $\nu_N\hra M$, we may assume that $M=\nu_N$, and  that the difference
\[
Z=\E-X\in\mf{X}(M)
\]
has linear approximation equal to zero.  The family of vector fields
\[ Z_t=\f{1}{t} \kappa_t^*Z,\ \ \ \ t>0,\]
extends smoothly to $t=0$. \footnote{In local bundle coordinates on $\nu_N$,
with $x^i$ the coordinates in the fiber direction and $y^j$ those in the base direction, we have
$Z=\sum_i  g^i(x,y)\f{\p}{\p x^i}+\sum_j h^j(x,y)\f{\p}{\p y^j}$,
where $x\mapsto g^i(x,y)$ vanishes to second order at $x=0$ (as a consequence of
$\nu(Z)=0$), and $x\mapsto h^j(x,y)$ vanishes to first order (since $Z|_N=0$).  Hence $Z_t=\f{1}{t^2}\sum_i  g^i(tx,y)\f{\p}{\p x^i}+\f{1}{t}\sum_j h^j(tx,y)\f{\p}{\p y^j}$ extends to $t=0$.}
Let $\varphi_t$ be the flow of the time-dependent vector field $Z_t$, with $\varphi_0=\on{id}$. Since $Z_t|_N=0$, the set of points $m$ such that the integral curve $\varphi_t(m)$ exists for time $0\le t\le 1$, is an open neighborhood of $N$ in $\nu_N$.
By the scaling property $\kappa_a^*Z_t=a Z_{at}$ for $0<a<1$, this neighborhood is invariant under $\kappa_t$ for $0\le t\le 1$. Using that $\kappa_t^*\E=\E$,  and
$t\f{d}{d t}\kappa_t^*Y=\kappa_t^*[\E,Y]$ for all vector fields $Y$, we obtain
\[ \begin{split}
\f{d}{d t}\varphi_t^*(\E-t\,Z_t)
&=\f{d}{d t}\varphi_t^*(\E-\kappa_t^* Z)\\
&=\varphi_t^*\Big(-[Z_t,\E-\kappa_t^*Z]-\f{1}{t} \kappa_t^*[\E,Z]\Big)\\
&=\varphi_t^*(-[Z_t,\E]-[\E,Z_t])=0.
\end{split}
\]
Hence $\varphi_t^*(\E-t Z_t)$ does not depend on $t$. Equality of the values at $t=1$ and $t=0$ gives $\varphi_1^*(X)=\E$.
Hence, any tubular neighborhood embedding that agrees with $\varphi_1$ near $N$ will give the desired linearization.
\end{proof}
\begin{remark}
The question of linearizability of vector fields is subtle, and has been extensively studied.
(See e.g. \cite{bas:lin} for a quick overview and recent results.) The classical result of Sternberg \cite{ste:loc,ste:str} gives $C^\infty$-linearizability of vector fields at critical points $m$, provided the endomorphism of $T_mM$ describing this linear approximation has \emph{non-resonant} eigenvalues. If the linear approximation is the Euler vector field, then this endomorphism is $-\on{id}$, and the non-resonance condition is satisfied.  Thus, for $N=\{m\}$, Lemma \ref{lem:linearization} reduces to a very special case of Sternberg's theorem.
\end{remark}

\begin{definition}
Let $N\subset M$ be a submanifold.
A vector field $X\in\mf{X}(M)$
is called \emph{Euler-like} (along  $N$) if it is complete, with $X|_N=0$, and its linear approximation is the Euler vector field: $\nu(X)=\E$.
\end{definition}
Given a tubular neighborhood embedding, the push-forward of  $\E$ under $\psi$ is an Euler-like vector field $X$ on the image $U=\psi(\nu_N)$. The tubular neighborhood
embedding itself can be recovered from $X$, by using its flow. In fact, we have the following precise result:
\begin{proposition}\label{prop:euler}
Suppose that  $X\in\mf{X}(M)$ is Euler-like along $N\subset M$. Then there exists a  unique tubular neighborhood embedding $\psi\colon \nu_N\to M$ such that
\[ \E\sim_\psi X.\]
Given an action of a Lie group $G$ on $M$, preserving the submanifold $N$ and the vector field $X$, then the tubular neighborhood embedding $\psi$ is $G$-equivariant.
\end{proposition}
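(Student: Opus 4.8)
The plan is to build $\psi$ by combining the local linearization of Lemma \ref{lem:linearization} with the globally defined flow of the complete vector field $X$, using completeness to extend over all of $\nu_N$. Write $\Phi_s$ for the flow of $X$ (defined for all $s$ since $X$ is complete), and recall from Example \ref{ex:euler} that the flow of $\E$ is $\kappa_{\exp(-s)}$. Since $X$ is Euler-like, Lemma \ref{lem:linearization} provides a tubular neighborhood embedding $\psi_0$ together with a $\kappa_t$-invariant ($0\le t\le 1$) neighborhood $U_0$ of $N$ on which $\psi_0^*X=\E$, equivalently $\psi_0\circ\kappa_{\exp(-s)}=\Phi_s\circ\psi_0$ on $U_0$. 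For arbitrary $v\in\nu_N$ I would then set
\[ \psi(v)=\Phi_{-s}\big(\psi_0(\kappa_{\exp(-s)}v)\big) \]
for any $s$ large enough that $\kappa_{\exp(-s)}v\in U_0$; such $s$ exist because $\kappa_{\exp(-s)}v$ tends to the zero section as $s\to\infty$. The intertwining relation on $U_0$ shows that this is independent of the choice of $s$, so $\psi$ is well defined on all of $\nu_N$, smooth (on a compact set a single $s$ works), and equal to $\psi_0$ on $U_0$.

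Next I would record the defining properties. Differentiating the relation $\psi\circ\kappa_{\exp(-r)}=\Phi_r\circ\psi$ (which follows from the formula by the group law) at $r=0$ gives $T\psi\circ\E=X\circ\psi$, i.e. $\E\sim_\psi X$. Because $\psi=\psi_0$ near $N$, the map sends the zero section to $N$ and satisfies $\nu(\psi)=\nu(\psi_0)=\on{id}$. Injectivity follows from injectivity of $\psi_0$ on $U_0$ together with the intertwining relation, and since $\dim\nu_N=\dim M$, an injective immersion is an open map; hence $\psi$ is an embedding onto an open neighborhood of $N$.

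For uniqueness, suppose $\psi'$ is another such embedding. Both satisfy $\psi^*X=\E$, resp. $(\psi')^*X=\E$, near $N$, so $\chi=(\psi')^{-1}\circ\psi$ is a diffeomorphism near $N$ with $\chi^*\E=\E$ and $\nu(\chi)=\on{id}$. Preservation of $\E$ means $\chi$ commutes with the scaling flow $\kappa_{\exp(-s)}$, so by the characterization of \cite{gra:hig} it lies in $\Aut(\nu_N)$; under the identification $\nu(\nu_N,N)=\nu_N$ the linear approximation of a bundle automorphism recovers the automorphism itself, so $\nu(\chi)=\on{id}$ forces $\chi=\on{id}$ and $\psi=\psi'$ near $N$. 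The intertwining relation then propagates this to all of $\nu_N$: for any $v$ and $s$ large, $\psi(v)=\Phi_{-s}(\psi(\kappa_{\exp(-s)}v))=\Phi_{-s}(\psi'(\kappa_{\exp(-s)}v))=\psi'(v)$.

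Finally, for a $G$-action preserving $N$ and $X$, the induced linear action on $\nu_N$ preserves the canonical vector field $\E$, while $G$ commutes with $\Phi_s$ because $X$ is $G$-invariant. I would then check that $v\mapsto g^{-1}\cdot\psi(g\cdot v)$ is again a tubular neighborhood embedding satisfying $\E\sim X$ (all three conditions being preserved by conjugation with the linear $G$-action), so that uniqueness forces it to equal $\psi$; this is precisely $G$-equivariance. The main obstacle is the existence step: one must leverage completeness of $X$ together with the contracting flow of $\E$ to promote the purely local linearization of Lemma \ref{lem:linearization} to a globally defined embedding of all of $\nu_N$, and verify that the flow-theoretic formula is smooth and independent of the auxiliary parameter $s$.
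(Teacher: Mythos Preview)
Your proof is correct, but the route to uniqueness is genuinely different from the paper's.

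For \emph{existence}, you are more explicit than the paper: Lemma~\ref{lem:linearization} only yields $\psi_0^*X=\E$ on a neighborhood $U_0$ of $N$, and you spell out how completeness of $X$ lets one propagate this to all of $\nu_N$ via $\psi(v)=\Phi_{-s}\big(\psi_0(\kappa_{\exp(-s)}v)\big)$. The paper simply says existence ``is clear from Lemma~\ref{lem:linearization}''.

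For \emph{uniqueness}, the paper takes a constructive route: it shows that any $\psi$ with $\E\sim_\psi X$ must have image $U=\{m\in M\mid \lim_{t\to 0}\lambda_t(m)\in N\}$ (where $\lambda_t=\Phi_{-\log t}$) and inverse given explicitly by
\[
\psi^{-1}(m)=\Big(\tfrac{d}{dt}\Big|_{t=0}\lambda_t(m)\Big)\!\!\mod T_xN,\qquad x=\lambda_0(m).
\]
This formula simultaneously proves uniqueness and makes $G$-equivariance immediate. Your argument is instead structural: $\chi=(\psi')^{-1}\circ\psi$ commutes with $\kappa_t$, hence is a bundle automorphism with $\nu(\chi)=\on{id}$, forcing $\chi=\on{id}$; then $G$-equivariance follows from uniqueness by the conjugation trick. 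Both are valid. The paper's formula has the advantage that the explicit description of $\psi^{-1}$ via $\lambda_t$ feeds directly into the later normal-form constructions (e.g.\ the maps $\wt\lambda_t$ in Theorem~\ref{th:anchored}); your argument is cleaner as a pure uniqueness statement.

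One small imprecision: the Grabowski--Rotkiewicz characterization applies to diffeomorphisms of the \emph{total space} of $\nu_N$, whereas your $\chi$ is a priori defined only on a neighborhood of the zero section. This is harmless: from $\chi\circ\kappa_t=\kappa_t\circ\chi$ for $0<t\le 1$ one gets $\chi(tv)=t\chi(v)$, and differentiating at $t=0$ shows $\chi$ is fiberwise linear on its domain, which is all you need.
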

\begin{proof}
The existence of such a tubular neighborhood embedding is clear from
Lemma \ref{lem:linearization}. To prove uniqueness, suppose that a tubular neighborhood
embedding $\psi$ satisfying $\E\sim_\psi X$ is given.

Let $\Psi_s$ be the flow of $\E$ and $\Phi_s$ the flow of $X$. Recall that $\Psi_s=\kappa_{\exp(-s)}$, where  $\kappa_t\colon \nu_N\to \nu_N$ denotes the scalar multiplication by $t\in\R$. Thus $\kappa_t=\Psi_{-\log(t)}$ for $t>0$;
accordingly we define $\lambda_t=\Phi_{-\log(t)}$. Since $\nu_N$ is invariant under
$\kappa_t$ for all $t>0$, its image $U=\psi(\nu_N)$ is invariant under $\lambda_t$
for all $t>0$. Furthermore, since $\lim_{t\to 0}\kappa_t$ is the retraction
$p$ from $\nu_N$ onto $N\subseteq \nu_N$, we have
\begin{equation}\label{eq:U}
 U=\{m\in M|\ \lim_{t\to 0}\lambda_t(m)\mbox{ exists and lies in } N\subset M\}.\end{equation}
Let $v\in \nu_N$, with image point $m=\psi(v)$, and put $x=\kappa_0(v)=\lambda_0(m)$.
Then $\kappa_t(v)$ is a smooth curve in $\nu_N$, defined for $t\ge 0$, and
$\lambda_t(m)$ its image under $\psi$. Thus
\[ T_x\psi\Big(\f{d}{d t}\Big|_{t=0} \kappa_t(v)\Big)=
\Big(\f{d}{d t}\Big|_{t=0} \lambda_t(m)\Big).\]
Since $\psi$ is a tubular neighborhood embedding, the map $\nu(\psi)$ on normal bundles is the identity map. Hence
\[ \Big(\f{d}{d t}\Big|_{t=0} \lambda_t(m)\Big)\!\!\mod T_xN
=\Big(\f{d}{d t}\Big|_{t=0} \kappa_t(v)\Big)\!\!\mod T_xN.\]
But the element on the right hand side is just $v\in \nu_N|_x$. Since $\psi(v)=m$,  this shows that
\begin{equation}\label{eq:psinv}
 \psi^{-1}(m)=\Big(\f{d}{d t}\Big|_{t=0} \lambda_t(m)\Big)\mod T_xN.
\end{equation}
Equations \eqref{eq:U} and \eqref{eq:psinv} express $U=\psi(\nu_N)$ and the inverse map
$\psi^{-1}\colon U\to \nu_N$, hence also $\psi$ itself,  in terms of the flow of the vector field $X$. This shows that $\psi$ is unique.

In the $G$-equivariant setting, it is immediate that \eqref{eq:psinv} is $G$-equivariant, hence so is $\psi$.
\end{proof}

\begin{remark}
A result similar to Proposition \ref{prop:euler} may be found in  \cite[Theorem 2.2]{gra:hig}. (One issue to be pointed out, however, is that the argument in \cite{gra:hig}, based on Shoshitaishvili's theorem on topological normal forms for vector fields,  does not apply to the $C^\infty$-case.)
\end{remark}

\begin{remark}\label{rem:bump}
Suppose that $X|_N=0$ with linearization $\nu(X)=\E$. Then we may multiply
 $X$ by a bump function supported on a neighborhood of $N$, and equal to $1$ on a smaller neighborhood, to arrange that $X$ is complete (and hence Euler-like). Indeed,
 by Lemma \ref{lem:linearization}  there is an open neighborhood of $N$ consisting of points $m$ with $\lim_{s\to \infty}\Phi_s(m)\in N$, and one only needs to take the bump function to be supported in such a neighborhood.
\end{remark}

\subsection{Functoriality}\label{subsec:functorialityTNE}
The following functorial property is immediate from the construction.
Suppose $\varphi\colon (M',N')\to (M,N)$ is a smooth map of pairs, defining a
vector bundle morphism $\nu(\varphi)$  as in \eqref{eq:mapofpairs}. Let $X,X'$ be
Euler-like vector fields along $N,N'$, respectively, with
\[ X'\sim_\varphi X.\]
Then the resulting tubular neighborhood embeddings give a commutative diagram:
\begin{equation}\label{diag:funct1} \xymatrixcolsep{50pt}\xymatrix{
{\nu(M',N')} \ar[r]^{\ \ \psi'}\ar[d]_{\nu(\varphi)}    & M'\ar[d]^\varphi\\
{\nu(M,N)}\ar[r]_{\ \ \psi} & M.
}
\end{equation}

\begin{example}\label{ex:taneuler}
If $X$ is Euler-like along $N$, then its tangent lift $X_T$ is Euler-like along $TN$.
Indeed,
\[ \nu(X_T)=\nu(X)_T=\E_T,\]
the tangent lift of the Euler vector field $\E\in\mf{X}(\nu_N)$.  Letting $\psi\colon \nu(M,N)\to M$ and $\psi_T\colon T\nu(M,N)\cong \nu(TM,TN)\to TM$
be the tubular neighborhood embeddings, we obtain a commutative diagram
\begin{equation}\label{diag:funct3} \xymatrixcolsep{50pt}\xymatrix{
{\nu(TM,TN)} \ar[r]^{\ \ \psi_T}\ar[d]_{}    & TM\ar[d]\\
{\nu(M,N)}\ar[r]_{\ \ \psi} & M.
}
\end{equation}
But, upon the identification $\nu(TM,TN)\cong T\nu(M,N)$, we see that $\E_T$  is just the Euler vector field on $\nu(TM,TN)$, because the tangent lift of scalar multiplication on $\nu(M,N)$ is scalar multiplication on $\nu(TM,TN)$.  It follows that
 $\psi_T$ is simply the differential $T\psi$.
\end{example}

\section{Anchored vector bundles}\label{sec:anchored}

As our first application of Euler-like vector fields, we will obtain a normal form theorem for integrable anchored vector bundles.  This result may be regarded as a version of the Stefan-Sussmann theorem for generalized distributions on manifolds.

\subsection{Basic definitions}
A {\em smooth generalized distribution} on $M$, in the sense of Stefan \cite{ste:int} and Sussmann \cite{sus:orb}, is a collection
$D=\bigcup_{m\in M} D_m$ of subspaces $D_m\subset T_mM$, with the following property:
There exists a  submodule $\mf{C}\subset \mf{X}(M)$ of the $C^\infty(M)$-module of vector fields, such that $D_m$ is the image of $\mf{C}$ under
evaluation $\mf{X}(M)\to T_mM$.  If $m\mapsto \dim(D_m)$ is constant, then $D$ is a vector subbundle of $TM$, referred to as a \emph{regular distribution}.

Given a vector bundle $E\to M$ equipped with an {\em anchor}, i.e., a bundle map $\a\colon E\to TM$ covering the identity map, the image $D=\a(E)$ is always a smooth generalized distribution with $\mf{C}=\a(\Gamma(E))$. By a result of \cite{dra:smo}, any smooth generalized distribution arises in this way, though in general there is no canonical choice for the  vector bundle and anchor. In many geometric situations, however, vector bundles and anchors are naturally present.

\begin{definition}
An \emph{anchored vector bundle} is a vector bundle $E\to M$  together
with a vector bundle morphism (called the \emph{anchor}) $\a\colon E\to TM$, with base map the identity map.  A {\em morphism} from an anchored vector bundle $F\to N$ to
an anchored vector bundle $E\to M$ is  a bundle map $\wt{\varphi}\colon F \to E$, with base map $\varphi\colon N \to M$, such that the following diagram commutes:
\begin{equation}\label{eq:morphismAVB}
\xymatrix{ F \ar[r]^{\wt{\varphi}}\ar[d]_{}& E\ar[d]^{}\\
T N\ar[r]_{{T\varphi}}& T M}
\end{equation}
Here the vertical maps are the anchors.
\end{definition}

Anchored vector bundles often arise as parts of more elaborate structures, such as Lie algebroids (see Section \ref{sec:lialg}) or Courant algebroids.

\begin{example}\label{ex:reg}
An anchored vector bundle with injective anchor is the same as a regular distribution.
\end{example}

\begin{example}
A \emph{bisubmersion} \cite{and:hol} is a manifold $Q$ with two surjective submersions
$\mathsf{s},\mathsf{t}\colon Q\to M$. Given a \emph{bisection}  $j\colon M\to Q$
 (that is, $\mathsf{s}\circ j=\mathsf{t}\circ j=\on{id}_M$),
the normal bundle $E=j^*(TQ)/TM$ has the structure of an anchored vector bundle, with the anchor induced by the difference $T\mathsf{s}-T\mathsf{t}\colon TQ\to TM$.
\end{example}

\begin{example}
Let $(E,\a)$ be an anchored vector bundle over $M$, and $F\subset E$ an anchored subbundle along a submanifold $N\subset M$. Then $\nu(E,F)$ is an anchored vector bundle over
$\nu(M,N)$, with anchor $\nu(\a)\colon \nu(E,F)\to \nu(TM,TN)\cong T\nu(M,N)$.
\end{example}

\begin{example}
Let $\dd$ and $V$ be vector spaces, and $\varrho\colon \dd\to \End(V)$ a linear map. Then $E=V\times \dd$ is an anchored vector
bundle with $\a(v,\zeta)=(v,\varrho(\zeta).v)\in TV$.  If $(E,\a)$ is an anchored vector bundle with $\a(E_m)=0$ at some point $m$, then its
\emph{linear approximation}  $\nu(E,E_m)$ at $m$ is of this type, with $V=T_mM,\ \dd=E_m$, and $\varrho$ the normal derivative of
the anchor, viewed as a section of the bundle $\on{Hom}(E,TM)$.
\end{example}

The group of automorphisms of an anchored vector bundle $(E,\a)$
will be denoted by $\on{Aut}_{AV}(E)$, and the Lie algebra of infinitesimal automorphisms by $\mf{aut}_{AV}(E)$.
Thus $\wt{X}\in \mf{aut}_{AV}(E)$ are the infinitesimal vector bundle automorphisms
satisfying
\[ \wt{X}\sim_{\a} X_T,\]
where $X_T\in\mf{X}(TM)$ is the  tangent lift of $X$. Equivalently, the corresponding operator $D$ on sections (cf.~ \eqref{eq:derprop}) satisfies
\begin{equation}\label{eq:anchaut}
 \a(D\tau)= [X,\a(\tau)]\end{equation}
for all $\tau\in\Gamma(E)$. The local flow defined by $\wt{X}\in \mf{aut}_{AV}(E)$
is by local automorphisms of the anchored vector bundle $(E,\a)$.
\begin{example}
Suppose that the anchor map $\a$ is injective, defining an inclusion $E\hra TM$. Then
$\a$ determines an isomorphism from $\mf{aut}_{AV}(E)\subset \mf{X}(E)$
to $\Gamma(E)\subset \mf{X}(M)$; the lift  $\wt{X}$ of $X\in \Gamma(E)$ is the
tangent lift $X_T\in \mf{X}(TM)$, restricted to $E\subset TM$.
\end{example}

\subsection{Pull-backs of anchored vector bundles}\label{subsec:pbAVB}
Suppose that $(E,\a)$ is an anchored vector bundle  over $M$, and  $\varphi\colon N\to M$ is a smooth map transverse to $\a$. Then the fiber product
\[
\xymatrix{ \varphi^! E \ar[r]^{}\ar[d]_{}& E\ar[d]^{\a}\\
T N\ar[r]_{{T\varphi}}& T M}
\]
defines an anchored vector bundle $\varphi^!E$ over $N$, such that the diagonal map  $\varphi^!E\to E\times TN$ is a morphism of anchored vector bundles.
The upper horizontal map is a morphism of anchored vector bundles, with base map $\varphi$.
Notable special cases include:
\begin{enumerate}
\item $\varphi^!TM=TN$;
\item if $N=M\times Q$, with $\varphi$ the projection to $M$, then
$\varphi^!E=E\times TQ$;
\item if $i\colon N\hra M$ is a submanifold transverse to $\a$, then $i^!E=\a^{-1}(TN)$;
\item if $\a$ is injective, so that $E\subset TM$,  then $\varphi^!E=(T\varphi)^{-1}(E)\subset TN$;
\item if $\varphi$ is a diffeomorphism, then $\varphi^!E=\varphi^*E$, the usual pull-back as a vector bundle.
\end{enumerate}
Under composition of maps, one has that $\psi^!(\varphi^! E) = (\varphi\circ \psi)^! E$, provided that the appropriate transversality conditions are satisfied.

\subsection{Transversals}
Let $(E,\a)$ be an anchored vector bundle over $M$.
\begin{definition}
A \emph{transversal} for $(E,\a)$ is a submanifold $i\colon N\hra M$ transverse to the anchor.
\end{definition}
Given a transversal, we can form the anchored vector bundle $i^!E=\a^{-1}(TN)$. Its pull-back
to the normal bundle $p\colon \nu_N\to N$ has the structure of a double vector bundle,
\[
\xymatrix{ p^!i^! E \ar[r]^{}\ar[d]_{}& i^! E\ar[d]\\
\nu_N \ar[r]_{}& N.}\]
Here, the vector bundle structure for the upper horizontal arrow is obtained by restriction from the
vector bundle structure on $i^!E\times T\nu_N\to i^!E\times TN$. In particular, the corresponding Euler vector field
is the restriction of $(0,\E_T)$ to $p^!i^!E\subset i^!E\times T\nu_N$.

The following Lemma shows that $p^!i^!E$ may be regarded as a linear approximation of $E$ along $N$.
%
\begin{lemma}\label{lem:fibprod}
Given a transversal $i\colon N\hra M$ for $(E,\a)$, there is a canonical isomorphism  of double vector bundles
\begin{equation}\label{eq:note}
\nu(E,\,i^!E) \cong p^!i^!E\end{equation}
intertwining the anchor maps.
\end{lemma}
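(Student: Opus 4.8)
The goal is to construct a canonical double vector bundle isomorphism $\nu(E, i^!E) \cong p^!i^!E$ intertwining anchors. My plan is to build the map directly and then verify it is an isomorphism using the transversality hypothesis, exploiting the functoriality of the normal bundle construction established earlier in the section.

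First I would set up the relevant maps. We have the anchor $\a\colon E \to TM$, which by transversality of $N$ is transverse to $TN$ (this is exactly the transversal condition $\a \pitchfork N$). The subbundle $F := i^!E = \a^{-1}(TN)$ sits over $N$, and the pair $(E,F)$ lies over $(M,N)$, so we get a normal bundle $\nu(E,F)$, which is a double vector bundle over $\nu(M,N) = \nu_N$ and over $i^!E = F$. On the other side, $p^!i^!E$ is the fiber product of $p^!(\text{anchor})$; concretely it is $\{(\xi, w) \in i^!E \times T\nu_N : \a_F(\xi) = Tp(w)\}$ where $\a_F$ is the restriction of the anchor. The plan is to apply the functor $\nu(-)$ to the anchor map $\a\colon (E,F) \to (TM,TN)$, yielding
\[
\nu(\a)\colon \nu(E,F) \lra \nu(TM,TN) \cong T\nu_N,
\]
where the last isomorphism is the canonical one from \eqref{eq:tnu}. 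Together with the projection $\nu(E,F) \to \nu(M,N) \to N$ and the base-component of $\nu(E,F) \to F = i^!E$, this produces a map from $\nu(E,F)$ into $i^!E \times T\nu_N$. I would then check that its image lands in the fiber product $p^!i^!E$, i.e. that the two composites to $T\nu_N$ (namely $\nu(\a)$ followed by identification, versus $Tp$ applied after projecting) agree. This compatibility is precisely the content of the commuting square relating $\nu(T\varphi)$ and $T(\nu(\varphi))$ displayed after \eqref{eq:tnu}, applied to $\varphi = \a$.

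Next I would verify that this canonical map is a fiberwise isomorphism. The key input is transversality: since $N = i^{-1}$ is transverse to $\a$ with $F = \a^{-1}(TN)$, the induced map $\nu(\a)\colon \nu(E,F) \to \nu(TM,TN)$ is a fiberwise isomorphism by the general principle recorded right after \eqref{eq:mapofpairs} (if $\varphi$ is transverse to $N$ and $N' = \varphi^{-1}(N)$, then $\nu(\varphi)$ is a fiberwise isomorphism). Here the roles are: $\a$ plays the role of $\varphi\colon (E,F) \to (TM, TN)$, with $F = \a^{-1}(TN)$ exactly the preimage. So $\nu(\a)$ is a fiberwise iso onto $\nu(TM,TN) \cong T\nu_N$. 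I then argue that the combined map into $p^!i^!E$ is an isomorphism by a dimension count or, more cleanly, by exhibiting it as compatible with the two vector bundle projections: over $i^!E$ the map is $\nu(\a)$ (an iso), and over $\nu_N$ it is induced by the identity on $\nu_N$, so a double-vector-bundle morphism that is an iso on one structure and identity on the other is an iso. Finally, intertwining of anchors is automatic, since the anchor of $\nu(E,F)$ is by definition $\nu(\a)$ (as in the example preceding the lemma), and the anchor of $p^!i^!E$ is the projection to $T\nu_N$, which is exactly the $T\nu_N$-component of our map.

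I expect the main obstacle to be the bookkeeping of the \emph{double} vector bundle structures: one must check that the canonical map respects both vector bundle structures simultaneously (the "horizontal" one over $i^!E$ and the "vertical" one over $\nu_N$), including matching the linear (Euler) structures described just before the lemma — namely that the Euler vector field on $p^!i^!E$ is the restriction of $(0, \E_T)$, and that under the isomorphism this corresponds to the Euler structure coming from $\nu(E,F)$ as a bundle over $i^!E$. Establishing this compatibility rigorously requires unwinding the identification \eqref{eq:tnu} in local double-bundle coordinates, which is the technical heart; the transversality-driven fiberwise isomorphism statement is comparatively formal. I would handle the coordinate verification by choosing adapted bundle charts (as in the footnote of Lemma \ref{lem:linearization}) in which both sides become manifestly the same fiber product, deferring the detailed naturality of \eqref{eq:tnu} to the appendix referenced in the excerpt.
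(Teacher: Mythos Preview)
Your proposal is correct and follows essentially the same route as the paper: build the map $\zeta\mapsto (p_{i^!E}(\zeta),\nu(\a)(\zeta))$ into $i^!E\times T\nu_N$, use transversality to see that $\nu(\a)$ is a fiberwise isomorphism, and identify the image with the fiber product $p^!i^!E$. The paper streamlines your last step by observing directly that the commuting square (coming from functoriality of $\nu$ applied to $\a$, not from the square after \eqref{eq:tnu} as you cite) is a fiber product diagram once the left vertical map is a fiberwise isomorphism, so no local-coordinate check of the double vector bundle compatibility is needed.
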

\begin{proof}
The normal bundle functor, applied to
$\a\colon (E,i^!E)\to (TM,TN)$,  gives a commutative diagram
\begin{equation}\label{eq:first}
 \xymatrixcolsep{50pt}\xymatrix{
{\nu(E,i^!E)} \ar[r]^{p_{i^!E}}\ar[d]_{\nu(\a)}    & i^!E \ar[d]_{\a} \\
{\nu(TM,TN)}\ar[r]^{{p}_{TN}} & TN.
}
\end{equation}
It follows from transversality (see the comment after Equation \eqref{eq:mapofpairs}) that the left vertical map is a fiberwise vector bundle isomorphism, with base map the right vertical map. We conclude that \eqref{eq:first} is a fiber product diagram. By  \eqref{eq:tnu}, the lower left corner of the diagram can be replaced with $T\nu(M,N)$. Then the lower horizontal map becomes $Tp$, and the left vertical map an anchor map for $\nu(E,i^!E)$.  But the fiber product of $ T\nu(M,N)$ and $i^!E$ over $TN$ is exactly $p^!i^!E$, by definition. We conclude that
\begin{equation}\label{eq:19}
 \nu(E,i^!E)\to i^!E\times T\nu_N,\ \ \xi\mapsto \big(p_{i^!E}(\xi),\ \nu(\a)(\xi)\big)
\end{equation}
defines an injective morphism of double vector bundles
\[
{\xymatrix{ \nu(E,i^!E) \ar[r]^{}\ar[d]_{}& i^! E\ar[d]\\
\nu_N \ar[r]_{}& N}} \ \ \ \raisebox{-20pt}{$\longrightarrow$} \ \ \ {\xymatrix{i^! E\times T\nu_N \ar[r]^{}\ar[d]_{}& i^! E\times TN\ar[d]\\
N\times \nu_N \ar[r]_{}& N\times N}}
\]
with image the double vector bundle $p^!i^!E\subset i^!E\times T\nu_N$
\end{proof}

In the next sections, we formulate a condition under which $(E,\a)$ is isomorphic near $N$ to its linear approximation.
The proofs will involve the following fact.
\begin{lemma}\label{lem:epsilon}
Let $(E,\a)$ be an anchored vector bundle over $M$, and $N\subset M$ a transversal.
Then there exists a section $\epsilon \in\Gamma(E)$
with $\epsilon|_N=0$, such that  $X=\a(\epsilon)$ is Euler-like.
Given an action of a Lie group $G$ by automorphisms of $(E,\a)$, such that the action on the base is proper, one can take the section $\epsilon$ to be $G$-invariant.
\end{lemma}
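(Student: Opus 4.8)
I want to produce a section $\epsilon\in\Gamma(E)$ vanishing on $N$ whose image under the anchor is Euler-like along $N$. Recall that Euler-like means $\a(\epsilon)|_N=0$ (which follows from $\epsilon|_N=0$), that $\nu(\a(\epsilon))=\E$, and completeness. The completeness is not the real issue: by Remark \ref{rem:bump}, once I have a section vanishing on $N$ with the correct linear approximation, I may multiply by a $G$-invariant bump function supported near $N$ and equal to $1$ on a smaller neighborhood to arrange completeness without disturbing the linear approximation. So the heart of the matter is to construct $\epsilon$ with $\epsilon|_N=0$ and $\nu(\a(\epsilon))=\E$.

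\textbf{Reduction to a normal-derivative condition.} The key observation is Remark \ref{rem:linearapprox}(iii): for a vector field $X=\a(\epsilon)$ with $X|_N=0$, the linear approximation $\nu(X)\in\mf{gau}(\nu_N)$ is computed from the normal derivative $\d^N X\colon \nu_N\to TM|_N$ followed by projection $TM|_N\to\nu_N$, and $\nu(X)=\E$ precisely when this composite is the identity on $\nu_N$. Since the normal derivative of $\a\circ\epsilon$ factors through the normal derivative $\d^N\epsilon\colon\nu_N\to E|_N$ composed with $\a|_N$, the condition I must arrange is that
\[
\nu_N \xrightarrow{\ \d^N\epsilon\ } E|_N \xrightarrow{\ \a|_N\ } TM|_N \xrightarrow{\ \pr\ } \nu_N
\]
is the identity on $\nu_N$. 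Now the transversality hypothesis is exactly what makes this solvable: $N$ being a transversal means $\a(E|_N)+TN=TM|_N$, so the composite $\pr\circ\a|_N\colon E|_N\to\nu_N$ is surjective as a bundle map over $N$. Hence it admits a right splitting $s\colon\nu_N\to E|_N$ with $(\pr\circ\a|_N)\circ s=\on{id}_{\nu_N}$ (choose, say, a metric and take the minimal-norm right inverse). Since every bundle map $\nu_N\to E|_N$ arises as the normal derivative of some section vanishing on $N$ (stated just after \eqref{eq:normal}, via a partition of unity), I can pick $\epsilon\in\Gamma(E)$ with $\epsilon|_N=0$ and $\d^N\epsilon=s$. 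This $\epsilon$ then has $\a(\epsilon)$ Euler-like after the bump-function correction.

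\textbf{The equivariant refinement.} The genuinely delicate point is the $G$-equivariant case, where the action on the base is proper. Here I want $\epsilon$ to be $G$-invariant, i.e. invariant under the induced $G$-action on $\Gamma(E)$. The strategy is to average. Properness of the base action guarantees the existence of invariant tubular-neighborhood data and invariant Riemannian metrics on $M$ and on $E$; with an invariant metric the minimal-norm splitting $s\colon\nu_N\to E|_N$ above is canonical and hence $G$-equivariant. To upgrade the choice of $\epsilon$ itself to an invariant one, I average a locally finite, $G$-invariant partition of unity against the $G$-action: concretely, start from any $\epsilon_0$ with $\d^N\epsilon_0=s$, and set $\epsilon=\int_G g\cdot\epsilon_0\,\rho(g)\,dg$ using a cutoff $\rho$ supplied by properness so that the integral converges and defines a smooth $G$-invariant section. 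Because $s$ is $G$-equivariant and $\d^N$ commutes with the $G$-action, averaging preserves the relation $\d^N\epsilon=s$ and the vanishing $\epsilon|_N=0$. Finally I apply the $G$-invariant bump function from Remark \ref{rem:bump} to secure completeness. The main obstacle, then, is not any single step but ensuring that every choice—metric, splitting, partition of unity, and bump function—can be made simultaneously $G$-invariant; this is where properness of the base action is used in an essential way, and it is the part of the argument that must be set up carefully rather than the non-equivariant construction, which is routine.
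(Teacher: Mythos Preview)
Your non-equivariant argument is correct and coincides with the paper's: the paper phrases it as choosing a splitting of the short exact sequence $0\to i^!E\to E|_N\to \nu_N\to 0$, which is exactly your right inverse $s$ of the surjection $\pr\circ\a|_N\colon E|_N\to\nu_N$, and then realizing $s$ as the normal derivative of a section vanishing on $N$. (One small point you gloss over: by Remark~\ref{rem:linearapprox}(iii) the endomorphism corresponding to $\nu(X)$ is \emph{minus} $\pr\circ\d^N X$, and by Example~\ref{ex:euler} the endomorphism corresponding to $\E$ is $\kappa_{-1}=-\on{id}$; the two minus signs cancel, so your conclusion that one needs $\pr\circ\d^N X=\on{id}$ is correct, but it would be good to say so.)

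For the equivariant part your approach differs from the paper's and, as written, has a gap. The formula $\epsilon=\int_G (g\cdot\epsilon_0)\,\rho(g)\,dg$ with $\rho$ a function of $g\in G$ alone does \emph{not} produce a $G$-invariant section when $G$ is non-compact: applying $h\in G$ gives $\int_G (g'\cdot\epsilon_0)\,\rho(h^{-1}g')\,dg'$, which equals $\epsilon$ only if $\rho$ is left-invariant, impossible for compactly supported $\rho$ on a non-compact group. The correct ``cutoff supplied by properness'' lives on $M$, not on $G$: one takes $c\in C_c^\infty(M)$ with $\int_G c(g^{-1}x)\,dg=1$ for all $x$ and sets $\bar\epsilon(x)=\int_G c(g^{-1}x)\,(g\cdot\epsilon_0)(x)\,dg$; this is genuinely invariant, and your observation that averaging preserves $\epsilon|_N=0$ and $\d^N\epsilon=s$ (since $s$ is equivariant) then goes through. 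The paper instead uses the slice theorem: over each slice one averages by the compact stabilizer, extends to the $G$-saturation, and patches with an invariant partition of unity. Both routes work, but your write-up needs the corrected averaging formula.
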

\begin{proof}Consider the exact sequence
\begin{equation}\label{eq:eexact}
 0\to i^!E\to E|_N\to \nu_N\to 0,
\end{equation}
where the last map is the anchor map $E|_N\to TM|_N$ followed by the quotient map.
A bundle map $\nu_N\to E|_N$ defines a splitting of \eqref{eq:eexact} if and only if its composition with the anchor defines a splitting of
\begin{equation}\label{eq:texact}
 0\to TN\to TM|_N\to \nu_N\to 0.
 \end{equation}
Choose $\epsilon\in \Gamma(E)$ with $\epsilon|_N=0$, such that
the normal derivative of $\epsilon$ defines a splitting of \eqref{eq:eexact}. Then
$X=\a(\epsilon)$ satisfies $X|_N=0$, and since $\d^N X = \d^N \a(\epsilon)=\a(\d^N\epsilon)$, the normal derivative of $X$ defines a splitting of \eqref{eq:texact}. That is,
$\d^N X\colon \nu_N\to TM|_N$ followed by projection $TM|_N\to \nu_N$ is the identity.
By Remark~\ref{rem:linearapprox} (iii), the linear approximation $\nu(X)$ equals
\emph{minus} the normal derivative, $-\d^N X$, followed by the projection to $\nu_N$.
Thus $\nu(X)=-\on{id}=\kappa_{-1}$,  which agrees with $\E$ by Example~\ref{ex:euler}.
Multiplying $\epsilon$ by a bump function, we may arrange that $X=\a(\epsilon)$ is complete (see Remark \ref{rem:bump}).

In the $G$-equivariant setting, if the action on the base is proper,
choose a $G$-equivariant open cover consisting of flow-outs of slices for the action.
Over each slice, one can make $\epsilon$ invariant by averaging  (using that the stabilizer
groups are compact). This then extends to an invariant section on the flow-out of the slice.
Finally, one patches the local definitions by using a $G$-invariant partition of unity.
\end{proof}


\subsection{Normal form theorem}
One of several versions of the Stefan-Sussmann theorem asserts that if a smooth generalized distribution $D\subset TM$ is spanned by a locally finitely generated submodule $\mf{C}\subset \mf{X}(M)$,
such that $\mf{C}$ is closed under Lie brackets,  then $D$ defines a generalized foliation. Stefan-Sussmann \cite{ste:int,sus:orb} also gave integrability criteria in terms of the submodule $\mf{D}\subset \mf{X}(M)$ of \emph{all} vector fields tangent to $D$, but these contain errors; see Balan \cite{bal:not} for counter-examples and corrections. In the case of anchored vector bundles, we take $\mf{C}=\a(\Gamma(E))$.
\begin{definition}
An anchored vector bundle $(E,\a)$ will be called \emph{involutive} if $\a(\Gamma(E))$
is a Lie subalgebra of $\mf{X}(M)$.
\end{definition}

\begin{example}\label{ex:bracket}
Let $(E,\a)$ be an anchored vector bundle equipped with an additional map
$[\cdot,\cdot]\colon \Gamma(E)\times \Gamma(E) \to \Gamma(E)$ such that $\a([\sigma,\tau])=[\a(\sigma),\a(\tau)]$, for all $\sigma, \tau \in \Gamma(E)$.
Then $\a(\Gamma(E))$ is a Lie subalgebra, and hence $(E,\a)$ is involutive. This applies to Lie algebroids, Courant algebroids, and various kinds of Leibniz algebroids.
\end{example}

\begin{proposition}\label{prop:pullbackinvolutive}
Suppose that $(E,\a)$ is an anchored vector bundle over $M$, and $\varphi\colon N\to M$ is a smooth map transverse to $\a$. If $E$ is involutive, then the anchored vector bundle $\varphi^!E$ over $N$ is involutive.
\end{proposition}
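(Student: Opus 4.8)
The plan is to reduce the statement to a purely local computation of Lie brackets, organized around the structure functions of the involutive bundle $(E,\a)$. Recall that $\varphi^!E$ consists of pairs $(e,v)$ with $e\in E_{\varphi(n)}$, $v\in T_nN$ and $\a(e)=T\varphi(v)$, and that its anchor is $\a_{\varphi^!E}(e,v)=v$. Writing $\varphi^*E$ for the ordinary pullback vector bundle, involutivity of $\varphi^!E$ amounts to showing that the $C^\infty(N)$-submodule
\[ \mf{C}'=\a_{\varphi^!E}(\Gamma(\varphi^!E))=\{v\in\mf{X}(N)\mid \exists\, e\in\Gamma(\varphi^*E),\ \a\circ e=T\varphi\circ v\} \]
is closed under the Lie bracket. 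Since this is a local condition on $N$, and since a vector field that admits a local lift to $\varphi^!E$ near every point admits a global one (patch the local lifts with a $C^\infty(N)$-partition of unity, using that $\a_{\varphi^!E}$ is $C^\infty(N)$-linear), it suffices, given $v_1,v_2\in\mf{C}'$, to produce near each point a local section of $\varphi^!E$ whose anchor is $[v_1,v_2]$.

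First I would fix $n_0\in N$ and a local frame $\sigma_1,\dots,\sigma_k$ of $E$ near $\varphi(n_0)$, and set $X_l=\a(\sigma_l)$. Involutivity of $(E,\a)$ gives $[X_l,X_m]\in\a(\Gamma(E))$, so locally
\[ [X_l,X_m]=\sum_p c_{lm}^p\, X_p,\qquad c_{lm}^p\in C^\infty(M). \]
Next, choosing local lifts $(e_i,v_i)\in\Gamma(\varphi^!E)$ of $v_1,v_2$ and expanding $e_i=\sum_l a_l^i\,\varphi^*\sigma_l$, the defining relation of $\varphi^!E$ reads $T\varphi(v_i)=\sum_l a_l^i\,(X_l\circ\varphi)$ as sections of $\varphi^*TM$; equivalently, for every $h\in C^\infty(M)$ one has $v_i(\varphi^*h)=\sum_l a_l^i\,\varphi^*(X_l h)$.

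The heart of the argument is then a direct computation of $[v_1,v_2]$ acting on pulled-back functions $\varphi^*h$. Applying the relation twice and invoking the structure functions $c_{lm}^p$, the second-order terms combine into $\varphi^*([X_l,X_m]h)=\sum_p \varphi^*(c_{lm}^p)\,\varphi^*(X_ph)$, while the first-order terms contribute the $v_i$-derivatives of the coefficients $a_l^i$. The upshot is an identity of the form $[v_1,v_2](\varphi^*h)=\sum_p d_p\,\varphi^*(X_p h)$ for explicit $d_p\in C^\infty(N)$ built from the $a_l^i$, their $v_i$-derivatives, and $\varphi^*(c_{lm}^p)$. This says precisely that $T\varphi([v_1,v_2])=\a(e')$ for $e'=\sum_p d_p\,\varphi^*\sigma_p\in\Gamma(\varphi^*E)$, so that $(e',[v_1,v_2])$ is a local section of $\varphi^!E$ with anchor $[v_1,v_2]$, as required.

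I expect the main obstacle to be organizational rather than conceptual: the content is the familiar fact that $\varphi$-relatedness is compatible with Lie brackets, but here the lifts $v_i$ are only ``$\varphi$-related'' to the $\a$-images of \emph{sections} of $\varphi^*E$ (with non-constant coefficients $a_l^i$) rather than to honest vector fields on $M$. The bookkeeping must therefore track how the coefficient derivatives and the pulled-back structure functions assemble into a single section $e'$; the decisive point that makes everything close up is that the $c_{lm}^p$, produced by involutivity on $M$, pull back under $\varphi$ to supply exactly the coefficients needed on $N$. A secondary, routine point is the globalization from local lifts to a global section, for which the $C^\infty(N)$-linearity of $\a_{\varphi^!E}$ together with a partition of unity suffices.
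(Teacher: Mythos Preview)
Your argument is correct. You reduce to a local computation in a frame, use involutivity on $M$ to obtain structure functions $c_{lm}^p$ with $[X_l,X_m]=\sum_p c_{lm}^p X_p$, and then verify directly that $[v_1,v_2]$ acts on pulled-back functions through an expression of the form $\sum_p d_p\,\varphi^*(X_p h)$, which is exactly what is needed to exhibit a lift $(e',[v_1,v_2])\in\Gamma(\varphi^!E)$. The partition-of-unity step for globalization is routine and correctly justified by the $C^\infty(N)$-linearity of the anchor.

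The paper takes a different, more coordinate-free route. It regards $\varphi^!E$ as the anchored subbundle $F\subset E\times TN$ along $\on{Gr}(\varphi)$ consisting of elements whose anchor is tangent to the graph. Given $\sigma_1,\sigma_2\in\Gamma(F)$, one extends them to sections $\tau_1,\tau_2$ of $E\times TN$ over $M\times N$; their anchors $Y_1,Y_2$ are tangent to $\on{Gr}(\varphi)$, hence so is $[Y_1,Y_2]$, and involutivity of $E\times TN$ lets one lift $[Y_1,Y_2]$ to a section $\tau$ whose restriction to the graph lands in $F$. The paper's argument thus hides all the bookkeeping inside two general facts (brackets of tangent vector fields are tangent; involutivity of a product), whereas your computation makes the lift completely explicit via the formula $d_p=v_1(a_p^2)-v_2(a_p^1)+\sum_{l,m}a_l^1 a_m^2\,\varphi^*(c_{lm}^p)$. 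Your approach has the advantage of being self-contained and giving a concrete formula; the paper's has the advantage of avoiding frames and making the argument manifestly global from the start.
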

\begin{proof}
By construction, $\varphi^!E$ is identified with the anchored subbundle
$F\subset E\times TN$ along the graph of $\varphi$, consisting of elements
of $(E\times TN)|_{\on{Gr}(\varphi)}$ whose image under the anchor
is tangent to $\on{Gr}(\varphi)$. Given two sections $\sigma_1,\sigma_2$
of $F$, choose extensions to sections $\tau_1,\tau_2\in \Gamma(E\times TN)$. Their images $Y_1,Y_2$ under the anchor are tangent to
$\on{Gr}(\varphi)$, and so is the Lie bracket $Y=[Y_1,Y_2]$. Since $E\times TN$ is involutive, $Y$ lifts to a section
$\tau\in\Gamma(E\times TN)$.  Its restriction $\sigma=\tau|_{\on{Gr}(\varphi)}$ is a
section
of $F$, satisfying $\a(\sigma)=[\a(\sigma_1),\a(\sigma_2)]$.
\end{proof}

 The main result of this section is the following normal form result, showing that in the involutive case $E$ is locally isomorphic near a given transversal
to its linear approximation.
\begin{theorem}[Transversals for anchored vector bundles]\label{th:anchored}
Let $(E,\a)$ be an anchored vector bundle over $M$, and $N\subset M$ a transversal.
%
\begin{enumerate}
\item \label{it:tildex}
Suppose $(E,\a)$ is involutive. Then there exists  $\wt{X}\in\aut_{AV}(E)$ vanishing along $i^!E$, such that the base vector field $X$ is Euler-like along $N$.
\item \label{it:family}
Any $\wt{X}\in\aut_{AV}(E)$ as in \eqref{it:tildex} determines an isomorphism of anchored vector bundles
\[ \wt{\psi}\colon p^!i^!E\to E|_U,\]
with base map a tubular neighborhood embedding $\psi\colon \nu_N\to U\subset M$.
\end{enumerate}
If a Lie group $G$ acts on $(E,\a)$ by automorphisms, such that the action on the base $M$ is proper and preserves $N$, then $\wt{X}$ in \eqref{it:tildex}
can be chosen $G$-invariant, and for any such $\wt{X}$ the resulting map $\wt{\psi}$ in \eqref{it:family} is $G$-equivariant.
\end{theorem}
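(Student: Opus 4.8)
The strategy is to recognize the vector field $\wt X$ of part~\eqref{it:tildex} as an Euler-like vector field along the submanifold $i^!E\subset E$ of the total space, so that part~\eqref{it:family} becomes a direct application of the tubular neighborhood machinery of Section~\ref{sec:euler} to $\wt X$. Thus the real content lies in part~\eqref{it:tildex}, which is where involutivity enters; once $\wt X$ is in hand, everything else is essentially formal. Accordingly I would first produce $\wt X$, then verify it is Euler-like along $i^!E$, then feed it into Proposition~\ref{prop:euler} and read off $\wt\psi$ via functoriality, tracking equivariance throughout.

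For~\eqref{it:tildex}, apply Lemma~\ref{lem:epsilon} to obtain $\epsilon\in\Gamma(E)$ with $\epsilon|_N=0$ and $X=\a(\epsilon)$ Euler-like along $N$. By \eqref{eq:anchaut}, lifting $X$ to $\wt X\in\aut_{AV}(E)$ amounts to producing a derivation $D\colon\Gamma(E)\to\Gamma(E)$ over $X$ with $\a(D\tau)=[X,\a(\tau)]$ for all $\tau$. Involutivity is precisely what guarantees that $[X,\a(\tau)]=[\a(\epsilon),\a(\tau)]$ lies in $\a(\Gamma(E))$, hence admits smooth lifts through $\a$; local lifts patched by a partition of unity yield a global $D$, since a convex combination of derivations over the same base field $X$ is again such a derivation. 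This produces an element $\wt X\in\aut_{AV}(E)$ covering $X$.

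The crux of~\eqref{it:tildex} is to arrange $\wt X|_{i^!E}=0$. Since $X|_N=0$, along $E|_N$ the field $\wt X$ is vertical, and on each fibre it is the linear vector field of $-D_m\in\End(E_m)$. As $\d_mX$ annihilates $T_mN$, one finds, for $\tau$ with $\tau(m)=e\in(i^!E)_m$, that $[X,\a(\tau)](m)=-\d_mX(\a(e))=0$; hence $D_m$ maps $(i^!E)_m$ into $\ker\a_m\subset(i^!E)_m$, so that a priori $\wt X|_{i^!E}$ is only $\ker\a$-valued. Removing this term is the main obstacle. I would do so by choosing the lift $D$ to vanish on $i^!E|_N$ already at the local stage and then patching, the relevant data being organized by the exact sequence \eqref{eq:eexact} and the splitting furnished by $\epsilon$. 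The delicate point is that $\ker\a$ need not be a vector subbundle when the anchor has non-constant rank, so the smooth (and, in the equivariant setting, $G$-invariant) choice of lift is what requires care.

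Granting~\eqref{it:tildex}, I would check that $\wt X$ is Euler-like along $i^!E$ inside $E$: completeness follows since $\wt X$ is a fibrewise-linear lift of the complete field $X$; it vanishes on $i^!E$ by construction; and its linear approximation is the Euler field of $\nu(E,i^!E)$. The last point uses that $\wt X\sim_\a X_T$ forces $\nu(\wt X)\sim_{\nu(\a)}\nu(X_T)=\nu(X)_T=\E_T$ by \eqref{eq:xtanlift}, while $\wt X|_{i^!E}=0$ forces $\nu(\wt X)$ to project to zero on $i^!E$; under the identification $\nu(E,i^!E)\cong p^!i^!E$ of Lemma~\ref{lem:fibprod} these pin $\nu(\wt X)$ down as the Euler field $(0,\E_T)|_{p^!i^!E}$. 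Proposition~\ref{prop:euler} then yields the tubular neighborhood embedding $\wt\psi\colon p^!i^!E\cong\nu(E,i^!E)\to E$, with open image $E|_U$. Functoriality \eqref{diag:funct1} applied to $\pr\colon(E,i^!E)\to(M,N)$, where $\wt X\sim_{\pr}X$, identifies its base map with $\psi\colon\nu_N\to U$, while applied to $\a\colon(E,i^!E)\to(TM,TN)$, where $\wt X\sim_\a X_T$ and the embedding of $X_T$ is $T\psi$ by Example~\ref{ex:taneuler}, it gives $\a\circ\wt\psi=T\psi\circ\nu(\a)$, i.e.\ that $\wt\psi$ intertwines anchors; together with its $\R_{>0}$-equivariance, which makes it a bundle map by \cite{gra:hig}, this exhibits $\wt\psi$ as an isomorphism of anchored vector bundles over $\psi$. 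Finally, for the equivariant statement I would take $\epsilon$ to be $G$-invariant via the proper case of Lemma~\ref{lem:epsilon} and use $G$-invariant partitions of unity and averaging to make $D$, hence $\wt X$, $G$-invariant; the $G$-equivariance of $\wt\psi$ for any such $\wt X$ then follows directly from the equivariant part of Proposition~\ref{prop:euler}.
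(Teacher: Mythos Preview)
Your overall strategy---recognize $\wt X$ as Euler-like along $i^!E\subset E$ and then invoke Proposition~\ref{prop:euler} plus functoriality---is exactly the paper's, and your treatment of part~\eqref{it:family} is essentially the same as the paper's (the paper constructs the map \eqref{eq:direct} explicitly and checks it is an anchored-bundle isomorphism, then observes in the subsequent remark that this \emph{is} the tubular neighborhood embedding of $\wt X$; you go in the reverse order).

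The gap is in part~\eqref{it:tildex}. You correctly identify that an arbitrary lift $D$ of $X$ need not vanish on $i^!E|_N$, only take values in $\ker\a$ there, and you flag the difficulty of correcting this when $\ker\a$ is not a subbundle---but you do not actually resolve it. The paper's key move, which you are missing, is to use not just \emph{some} lift but a lift of a specific algebraic form: by Proposition~\ref{prop:invol}, involutivity yields a lift $\wt\a\colon\Gamma(E)\to\aut_{AV}(E)$ (built from a torsion-free $\a$-connection) satisfying the Leibniz-type rule \eqref{eq:fsig1},
\[
D_{f\sigma}\tau=f\,D_\sigma\tau-(\a(\tau)f)\,\sigma.
\]
One then sets $\wt X=\wt\a(\epsilon)$. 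Since $\epsilon|_N=0$, locally $\epsilon=\sum f_k\sigma_k$ with $f_k|_N=0$, and for $\tau$ with $\a(\tau)|_N$ tangent to $N$ both terms $f_kD_{\sigma_k}\tau$ and $(\a(\tau)f_k)\sigma_k$ vanish on $N$. Thus $D_\epsilon\tau|_N=0$ for all such $\tau$, i.e.\ $\wt X|_{i^!E}=0$ automatically, with no need to manipulate $\ker\a$. This is the missing idea; once you have it, the rest of your argument goes through.
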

The proof will be given in Section \ref{subsec:proof}, but here is an outline. Using that $N$ is transverse to the anchor,  we may choose a section $\epsilon \in \Gamma(E)$ such that $X=\a(\epsilon)$ is Euler-like. In Section \ref{subsec:lift}, we show that the involutivity of $(E,\a)$ implies the existence of a lift $\wt{\a}\colon \Gamma(E)\to \aut_{AV}(E)$ of the anchor map; we define $\wt{X}=\wt{\a}(\epsilon)$. We then argue that the vector field $\wt{X}$ on the total space $E$ is Euler-like  along $i^!E$. The map $\wt{\psi}$ is then obtained as a tubular neighborhood embedding, after identifying $\nu(E,i^!E)=p^!i^!E$.

If the normal bundle is trivial, the normal form in Theorem \ref{th:anchored} simplifies: For any choice of trivialization $\nu_N = N\times \P$ one gets
\[ p^!i^!E=i^!E\times T\P\]
as anchored vector bundles.  As a special case we obtain:
\begin{corollary}[Local splitting of anchored vector bundles]\label{cor:model}
Let $(E,\a)$ be an involutive anchored vector bundle over $M$. Let $m\in M$, and
$N\subset M$ a submanifold containing $m$, with $\a(E_m)\oplus T_mN=T_mM$. Let $\P=\a(E_m)$. Then $E$ is isomorphic, near $m$, to the direct product
\[
i^!E\times T\P.
\]
If a compact Lie group $G$ acts by automorphisms of $(E,\a)$, such that
the action on $M$ fixes $m$ and preserves $N$, then one can take this isomorphism to be $G$-equivariant.
\end{corollary}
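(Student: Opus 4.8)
The plan is to deduce this from Theorem \ref{th:anchored} together with the remark preceding the corollary; the only real work is to arrange the hypotheses of the theorem and to trivialize the normal bundle with the correct model fibre.

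First I would observe that the condition $\a(E_m)\oplus T_mN = T_mM$ says exactly that $N$ is transverse to $\a$ at $m$. Since transversality is an open condition, after replacing $N$ by a sufficiently small open neighborhood $N_0$ of $m$ I may assume that $N_0$ is a transversal in the sense of Section \ref{sec:anchored}. The direct sum decomposition moreover identifies the fibre $\nu_N|_m = T_mM/T_mN$ with $\a(E_m)=\P$. Shrinking $N_0$ to a contractible chart makes its normal bundle $\nu_{N_0}$ trivializable, and I would choose a trivialization $\nu_{N_0}\cong N_0\times\P$ compatible with this identification at $m$.

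Next I would apply Theorem \ref{th:anchored}: since $(E,\a)$ is involutive, part \eqref{it:tildex} supplies a $\wt X\in\aut_{AV}(E)$ vanishing along $i^!E$ whose base field is Euler-like along $N_0$, and part \eqref{it:family} produces an isomorphism of anchored vector bundles $\wt\psi\colon p^!i^!E\to E|_U$ over a tubular neighborhood embedding $\psi\colon \nu_{N_0}\to U$. Under the chosen trivialization $\nu_{N_0}\cong N_0\times\P$, the remark preceding the corollary identifies $p^!i^!E$ with $i^!E\times T\P$ as anchored vector bundles. Composing, $\wt\psi$ becomes the desired isomorphism $i^!E\times T\P\xrightarrow{\cong} E|_U$ defined on a neighborhood $U$ of $m$, which proves the first assertion.

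For the equivariant statement, compactness of $G$ guarantees that the action on $M$ is proper, so the $G$-invariant hypotheses of Theorem \ref{th:anchored} apply once $N$ has been shrunk $G$-equivariantly. Because $m$ is a fixed point, I can take the neighborhood $N_0$ to be $G$-invariant, and $G$ then acts linearly on $\P=\a(E_m)$, making it a $G$-representation. The step requiring care is the equivariant trivialization of $\nu_{N_0}$: here I would invoke Bochner linearization at the fixed point $m$ to make $N_0$ $G$-equivariantly contractible onto $m$, whence the $G$-vector bundle $\nu_{N_0}$ is $G$-equivariantly trivial with trivializing fibre $\P$ as a $G$-representation. With such a trivialization in hand, the remark and the $G$-equivariant clauses of Theorem \ref{th:anchored} yield a $G$-equivariant isomorphism exactly as above. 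I expect this equivariant trivialization of the normal bundle near the fixed point to be the main (and essentially the only) point demanding attention, the remainder being a direct translation of the theorem.
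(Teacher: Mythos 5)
Your proposal is correct and follows the same route the paper intends: the corollary is stated there as an immediate special case of Theorem \ref{th:anchored}, using precisely the observation that a trivialization $\nu_N\cong N\times\P$ (after shrinking $N$ so that transversality, which holds at $m$ by hypothesis, holds throughout) identifies $p^!i^!E$ with $i^!E\times T\P$ as anchored vector bundles. The one detail the paper leaves implicit---producing a $G$-equivariant trivialization of $\nu_N$ with fibre the $G$-representation $\P$ near the fixed point---is handled correctly by your appeal to Bochner linearization and equivariant contractibility onto $m$.
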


In particular, one obtains a version of the Stefan Sussmann-theorem: the generalized distribution $D=\a(E)\subset TM$ defined by the involutive anchored vector bundle $(E,\a)$ is integrable.
Indeed, in the local model of Corollary \ref{cor:model}
it is immediate that $\{m\}\times \P$ is a leaf of the distribution. More generally, the leaves of the singular foliation are seen to have a local product form $L\times \P$, where $L$ is a leaf of
$\a(i^!E)\subset TN$. For regular distributions  (see Example~\ref{ex:reg}) one recovers the (local) Frobenius theorem.

\begin{remark}
The standard proof of  the Stefan-Sussmann theorem constructs the leaves
by induction, using the flows of vector fields spanning $D\subset TM$.   See \cite[Proposition 1.12]{and:hol} or \cite[Section 1.5]{duf:po}. While our approach is not shorter, it has better functorial properties and, being coordinate-free, seems better suited to infinite-dimensional generalizations.
Indeed, for regular distributions  our argument is similar to the proof of Frobenius' theorem for
Banach manifolds in \cite[Chapter VI]{lan:dif} and \cite[Section 4.4]{ab:ma}.
\end{remark}

\subsection{Lift of the anchor map}\label{subsec:lift}
The space of sections $\sigma\in\Gamma(E)$ such that the vector field $\a(\sigma)$ lifts to $\aut_{AV}(E)$, is a $C^\infty(M)$-submodule of $\Gamma(E)$:
\begin{lemma}
Suppose that $X=\a(\sigma)$ lifts to $\wt{X}\in \aut_{AV}(E)$ with corresponding operator
$D=D_\sigma$, and let $f\in C^\infty(M)$. Then
\begin{equation}
 D_{f\sigma}\tau:=f D_\sigma\tau-(\a(\tau)f)\sigma
\end{equation}
defines a lift of $fX=\a(f\sigma)$ to $\wt{fX}\in \aut_{AV}(E)$.
\end{lemma}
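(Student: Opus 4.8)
The plan is to verify directly that the operator $D_{f\sigma}$ fulfills the two characterizing properties recalled in Section~\ref{subsec:notation}: first, that it satisfies the derivation rule \eqref{eq:derprop} relative to the base vector field $fX$, so that by the correspondence of that section it comes from a unique element $\wt{fX}\in\aut(E)$ lifting $fX=\a(f\sigma)$; and second, that it obeys the anchor compatibility \eqref{eq:anchaut}, which is exactly the condition placing $\wt{fX}$ in $\aut_{AV}(E)$. Both are short computations relying on nothing beyond the $C^\infty(M)$-linearity of the anchor and the Leibniz rule for the Lie bracket.

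For the derivation property, I would expand $D_{f\sigma}(g\tau)$ for $g\in C^\infty(M)$. The first term $f\,D_\sigma(g\tau)$ is handled by the derivation property of $D_\sigma$, whose base field is $X$, producing $fg\,D_\sigma\tau + fX(g)\tau$; the correction term $-(\a(g\tau)f)\sigma$ simplifies via $\a(g\tau)=g\,\a(\tau)$ to $-g(\a(\tau)f)\sigma$, so it scales $C^\infty(M)$-linearly and contributes no derivative of $g$. Collecting terms yields $g\,D_{f\sigma}\tau + fX(g)\tau = g\,D_{f\sigma}\tau + (fX)(g)\tau$, which is precisely \eqref{eq:derprop} for the base field $fX$, thereby identifying the lift $\wt{fX}$.

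For the anchor compatibility, I would apply $\a$ to the defining formula and use that $\a$ is $C^\infty(M)$-linear, together with the identity $\a(\sigma)=X$ and the hypothesis $\a(D_\sigma\tau)=[X,\a(\tau)]$. This gives $\a(D_{f\sigma}\tau)=f[X,\a(\tau)] - (\a(\tau)f)\,X$, and I would then recognize the right-hand side as $[fX,\a(\tau)]$ by the standard identity $[fY,Z]=f[Y,Z]-(Zf)Y$. This is exactly \eqref{eq:anchaut} for $fX$, so $\wt{fX}\in\aut_{AV}(E)$, as claimed.

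The computation presents no genuine obstacle; the only point deserving attention is the role of the correction term $-(\a(\tau)f)\sigma$, which is forced by the two requirements simultaneously. Under the derivation test it behaves purely $C^\infty(M)$-linearly, so it leaves the base-field term undisturbed, while under the anchor test it reproduces exactly the $-(\a(\tau)f)X$ correction that the Leibniz rule demands of $[fX,\a(\tau)]$. Matching these two conditions at once is what pins down the precise form of $D_{f\sigma}$.
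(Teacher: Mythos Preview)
Your proposal is correct and follows precisely the approach indicated in the paper, which simply states that ``by an easy computation, one verifies that $D_{f\sigma}$ satisfies the derivation property \eqref{eq:derprop} and is compatible with the anchor \eqref{eq:anchaut}.'' You have carried out exactly these two verifications in detail.
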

\begin{proof}
By an easy computation, one verifies that $D_{f\sigma}$ satisfies the  derivation property
\eqref{eq:derprop} and is compatible with the anchor \eqref{eq:anchaut}.
\end{proof}
The involutivity of an anchored vector bundle $(E,\a)$ is equivalent to the existence of lifts
of $\a(\sigma)$ for \emph{all} sections $\sigma$.

\begin{proposition}\label{prop:invol}
The anchored vector bundle $(E,\a)$ is involutive if and only if the anchor map $\a\colon \Gamma(E)\to \mf{X}(M)$
admits a lift $\wt{\a}\colon \Gamma(E)\to \aut_{AV}(E)$, such that the following diagram commutes:
\begin{equation}\label{eq:lift}
\xymatrix{ & \aut_{AV}(E)\ar[d]\\ \Gamma(E)\ar[ru]^{\wt{\a}}\ar[r]_{\a} & \mf{X}(M).}
\end{equation}
In this case, one can arrange that the operators $D_\sigma\colon \Gamma(E)\to \Gamma(E)$
defined by the lifts $\wt{\a}(\sigma)$ satisfy
\begin{equation}\label{eq:fsig1}
 D_{f\sigma}\tau=f D_\sigma\tau-(\a(\tau)f)\sigma
\end{equation}
for all $f\in C^\infty(M),\ \sigma,\tau \in\Gamma(E)$. Given an action of a Lie group $G$  by automorphisms of $(E,\a)$, such that the action on the base $M$ is proper,
one can take the lift $\wt{\a}$ to be $G$-equivariant.
\end{proposition}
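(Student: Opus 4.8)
The plan is to prove the two implications separately and then to upgrade the construction to the equivariant setting. For the easy implication, suppose a lift $\wt{\a}$ as in \eqref{eq:lift} exists, and let $D_\sigma$ be the operator associated to $\wt{\a}(\sigma)\in\aut_{AV}(E)$. Since the base vector field of $\wt{\a}(\sigma)$ is $\a(\sigma)$, the compatibility condition \eqref{eq:anchaut} reads $\a(D_\sigma\tau)=[\a(\sigma),\a(\tau)]$. In particular $[\a(\sigma),\a(\tau)]\in\a(\Gamma(E))$ for all $\sigma,\tau$, so $\a(\Gamma(E))$ is a Lie subalgebra of $\mf{X}(M)$ and $(E,\a)$ is involutive.

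For the converse, assume $(E,\a)$ is involutive; the task is to produce, for every $\sigma$, an operator $D_\sigma$ satisfying the derivation property \eqref{eq:derprop} relative to $X=\a(\sigma)$ and the compatibility \eqref{eq:anchaut}, depending on $\sigma$ so that \eqref{eq:fsig1} holds. First I would do this locally. Over a trivializing open set $U$ with a frame $e_1,\dots,e_k$, involutivity (which is a local condition, as one checks by multiplying frame sections by bump functions) lets me choose $c_{ab}\in\Gamma(E|_U)$ with $\a(c_{ab})=[\a(e_a),\a(e_b)]$. I set $D_{e_a}e_b:=c_{ab}$ and extend by the Leibniz rule \eqref{eq:derprop}; a direct check using $[X,fY]=f[X,Y]+X(f)Y$ shows that the resulting $D_{e_a}$ satisfies \eqref{eq:anchaut} on all sections. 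Then, for $\sigma=\sum_a g_a e_a$, I define $D_\sigma\tau:=\sum_a\big(g_a D_{e_a}\tau-(\a(\tau)g_a)\,e_a\big)$, which is forced by additivity together with the previous lemma. A short computation confirms that this $D_\sigma$ is $\R$-linear in $\sigma$, satisfies \eqref{eq:derprop} and \eqref{eq:anchaut}, and obeys \eqref{eq:fsig1}.

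To globalize, I would choose a locally finite cover $\{U_i\}$ of trivializing sets with local lifts $D^{(i)}$ as above, together with a subordinate partition of unity $\{\rho_i\}$, and set $D_\sigma:=\sum_i\rho_i\,D^{(i)}_\sigma$. The point is that all three properties survive this averaging: the derivation property holds because $\sum_i\rho_i=1$ absorbs the inhomogeneous term $\a(\sigma)(f)\tau$, while the compatibility \eqref{eq:anchaut} and the transformation rule \eqref{eq:fsig1} survive because each $\rho_i$ is a scalar function and again $\sum_i\rho_i=1$. This yields the global lift $\wt{\a}$, establishing the converse.

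The remaining point is equivariance, which I regard as the main subtlety. Two lifts satisfying \eqref{eq:fsig1} differ by a tensor: the difference $B_\sigma\tau=D_\sigma\tau-D_\sigma'\tau$ is $C^\infty(M)$-linear in both $\sigma$ and $\tau$ (linearity in $\sigma$ is exactly what \eqref{eq:fsig1} forces) and takes values in $\ker\a$; thus the set of lifts is an affine space modeled on the $C^\infty(M)$-module of such tensors $B$, on which $G$ acts linearly and for which $\a\circ B=0$ is a $G$-invariant condition. Starting from any lift, I would produce a $G$-invariant one by averaging, using the slice argument from the proof of Lemma \ref{lem:epsilon}: cover $M$ by flow-outs of slices for the proper $G$-action, average over the compact stabilizers on each slice, and patch with a $G$-invariant partition of unity. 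The main obstacle throughout is that the anchor need not be injective, so $D_\sigma$ is never determined by \eqref{eq:anchaut} alone; all the work lies in making consistent choices that globalize and depend on $\sigma$ through \eqref{eq:fsig1}, with the partition-of-unity averaging as the device that reconciles the local choices.
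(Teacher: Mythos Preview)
Your argument is correct, but it takes a different route from the paper. The paper introduces the auxiliary notion of an \emph{$\a$-connection} $\nabla\colon \Gamma(E)\times\Gamma(E)\to\Gamma(E)$ (obtained, e.g., from an ordinary connection by precomposing with $\a$), defines its \emph{torsion} $\mathcal{T}_\nabla(\sigma,\tau)=\a(\nabla_\sigma\tau)-\a(\nabla_\tau\sigma)-[\a(\sigma),\a(\tau)]$, and uses involutivity to lift $\mathcal{T}_\nabla$ to a tensor $S\in\Gamma(\wedge^2E^*\otimes E)$. Replacing $\nabla$ by $\nabla-\tfrac12 S$ kills the torsion, and one sets $D_\sigma\tau=\nabla_\sigma\tau-\nabla_\tau\sigma$; the transformation rule \eqref{eq:fsig1} and the anchor compatibility are then immediate from the connection axioms and torsion-freeness. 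For equivariance the paper simply starts with a $G$-equivariant connection, so that every subsequent object is automatically equivariant.

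Your construction is more elementary: you bypass $\a$-connections entirely, build $D$ by hand from local frames and arbitrary lifts $c_{ab}$ of $[\a(e_a),\a(e_b)]$, and patch with a partition of unity. This has the virtue of being very concrete, and your verification that the three properties survive convex combinations is exactly the right observation. The cost is that you carry out two rounds of patching (once to get the $c_{ab}$ over a trivializing set, once to globalize $D$), whereas the connection approach packages the first step into the choice of $\nabla$ and only needs a partition of unity to lift the torsion. A further small difference: the paper's $D$ is automatically antisymmetric ($D_\sigma\tau=-D_\tau\sigma$), while yours need not be, since you do not impose $c_{ab}=-c_{ba}$; this is harmless for the proposition as stated. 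Your equivariance argument, identifying the lifts as an affine space over a $G$-stable $C^\infty(M)$-module and averaging via slices, is correct and essentially dual to the paper's strategy of averaging the input connection.
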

\begin{proof}
It is convenient to consider the notion of an \emph{$\a$-connection},
given by a bilinear map
\[ \nabla\colon \Gamma(E)\times \Gamma(E)\to \Gamma(E),\ \ (\sigma,\tau)\mapsto \nabla_\sigma\tau\]
that is $C^\infty(M)$-linear in $\sigma$ and satisfies
\[ \nabla_\sigma(f\tau)=f\nabla_\sigma(\tau)+(\a(\sigma)f)\tau\]
for $f\in C^\infty(M),\ \ \sigma,\tau\in \Gamma(E)$.  (An ordinary vector bundle connection $\nabla'$ on
$E$ defines an $\a$-connection, by setting $\nabla_\sigma=\nabla'_{\a(\sigma)}$.) We define the \emph{torsion tensor} $\ca{T}_\nabla\in \Gamma(\wedge^2 E^*\otimes TM)$
by
\[  \ca{T}_\nabla(\sigma,\tau)
=\a(\nabla_\sigma\tau)-\a(\nabla_\tau\sigma)-[\a(\sigma),\a(\tau)].
\]

Suppose that $\a(\Gamma(E))$ is a Lie subalgebra. Then the last term in the formula for $\ca{T}_\nabla(\sigma,\tau)$ lifts to a section of $E$, and hence
$\ca{T}_{\nabla}$ lifts to a tensor $S\in \Gamma(\wedge^2 E^*\otimes E)$. (One first defines the lift locally,
using a basis of sections, and then uses a partition of unity.) The new $\a$-connection
$\ol{\nabla}_\sigma\tau=\nabla_\sigma\tau-\hh S(\sigma,\tau)$ has vanishing torsion. But for any torsion-free $\a$-connection $\nabla$, the
formula
\[ D_\sigma\tau=\nabla_\sigma\tau-\nabla_\tau\sigma\]
has the property
\eqref{eq:fsig1}, and
defines $\wt{\a}(\sigma)\in\aut_{AV}(E)$ lifting the vector field $\a(\sigma)$. In the presence of a $G$-action on $(E,\a)$ for which the action on the base is proper, one may take $\nabla$ to be $G$-equivariant, resulting in a $G$-equivariant lift $\wt{\a}$.

Conversely, given the lift $\wt{\a}$, with corresponding operators $D_\sigma$ on sections, we have that $\a(D_\sigma\tau)=[\a(\sigma),\a(\tau)]$. Hence $\a(\Gamma(E))$ is a Lie subalgebra.
\end{proof}

\begin{remark}
The lift $\wt{\a}$ in \eqref{eq:lift} is not a $C^\infty(M)$-module homomorphism, in general.
For example, if $E=TM$, with $\a$ the identity map, one can take $\wt{\a}$ to be the tangent lift of vector fields.
More generally, for Lie algebroids (treated in the next section) there is a natural lift \eqref{eq:lalift}
defined by the  bracket. These illustrate lifts which are not $C^\infty(M)$-linear.
\end{remark}

\subsection{Proof of the normal form theorem }\label{subsec:proof}
\begin{proof}[Proof of Theorem \ref{th:anchored}]
\begin{enumerate}
\item
By Lemma \ref{lem:epsilon}, we may choose
$\epsilon\in \Gamma(E)$ vanishing on $N$ and such that $X=\a(\epsilon)$ is Euler-like.
By Proposition \ref{prop:invol}, since $(E,\a)$ is involutive, there exists a lift $\wt{\a}\colon \Gamma(E)\to \aut_{AV}(E)$ satisfying \eqref{eq:fsig1}. Put $\wt{X}=\wt{\a}(\epsilon)\in\aut_{AV}(E)$. We claim
that $\wt{X}$ vanishes along $i^!E=\a^{-1}(TN)$, as a consequence of property  \eqref{eq:fsig1}. Indeed, the condition $\wt{X}|_{\a^{-1}(TN)} =  0$ is equivalent to saying that the flow of $\wt{X}$ restricts to the identity map on the subbundle $\a^{-1}(TN)\to N$ of $E\to M$. In terms of the operator $D_\epsilon: \Gamma(E)\to \Gamma(E)$ corresponding to $\wt{X}$, this  translates into the following condition: for all $\tau\in \Gamma(E)$ such that $\tau|_N \in \Gamma(\a^{-1}(TN))$, we have $D_\epsilon(\tau)|_N=0$. Since $\epsilon|_N=0$, we can assume that $\epsilon$ is of the form $f \sigma$, where $\sigma\in \Gamma(E)$ and $f\in C^\infty(M)$ vanishes on $N$. Then
$$
D_\epsilon(\tau) = D_{f\sigma}(\tau) = f D_\sigma(\tau) - (\a(\tau)f)\sigma
$$
implies that $D_\epsilon(\tau)|_N$ vanishes  when $\tau|_N \in \Gamma(\a^{-1}(TN))$.

In the $G$-equivariant situation, assuming that the action on $M$ is proper, one may take the section $\epsilon$ to be $G$-equivariant (cf. Lemma \ref{lem:epsilon}), and similarly for the  lift $\wt{\a}$. It then follows that $\wt{X}$ is
$G$-invariant.

\item
Suppose that $\wt{X}\in\aut_{AV}(E)$  vanishes along $i^!E\subset E$, and is a lift of an  Euler-like vector field $X\in\mf{X}(M)$.
Since $\wt{X}$ is $\a$-related to the tangent lift  $X_T$, the linear approximations are related under the bundle map  $\nu(\a)$ in \eqref{eq:first}:
\[ \nu(\wt{X})\sim_{\nu(\a)} \nu(X_T).\]
By Example \ref{ex:taneuler}, the tangent lift $X_T$ is Euler-like, thus $\nu(X_T)$ is the Euler vector field of $\nu(TM,TN)$.  Since the bundle map $\nu(\a)$ is a fiberwise isomorphism,  it follows that $ \nu(\wt{X})$ is the Euler vector field
for $\nu(E,i^!E)$. That is, $\wt{X}$ is Euler-like.
Let $\Phi_s$ be the flow of $X$, and $\wt{\Phi}_s$ the flow of $\wt{X}$. Write
$\lambda_t=\Phi_{{-\log(t)}}$, so that $\lambda_t\circ \psi=\psi\circ \kappa_t$, where $\psi: \nu_N\to U\subseteq M$ is the tubular neighborhood embedding determined by $X$.
 For all $t>0$, the map
$\wt{\lambda}_t=\wt{\Phi}_{{-\log(t)}}$ restricts to an automorphism of the anchored vector bundle $E|_U$,
with base map the restriction $\lambda_t|_U$. Since $\wt{X}$ is Euler-like, this family extends smoothly to $t=0$. Since $\wt{\lambda}_t$ preserves anchors for all $t>0$, the same is true for the limit $t=0$. Hence, $\wt{\lambda}_0\colon E|_U\to E|_U$ is a morphism of anchored vector bundles projecting onto
$i^!E\subset E|_U$. The morphism of anchored vector bundles
\begin{equation}\label{eq:direct}
 E|_U\to i^!E\times T\nu_N,\ \ \xi\mapsto \Big(\wt{\lambda}_0(\xi),\ T\psi^{-1}(\a(\xi))\Big),
\end{equation}
is injective, and since
\[ \a(\wt{\lambda}_0(\xi))=T\lambda_0(\a(\xi))=Tp(T\psi^{-1}(\a(\xi))),\]
we find that its image is exactly $p^!i^!E$. We take $\wt{\psi}\colon p^!i^!E\to E|_U$ to be the inverse map. In the $G$-equivariant case, if the vector field $\wt{X}$ is $G$-invariant, then all maps in this construction are $G$-equivariant, hence so is $\wt{\psi}$.
\end{enumerate}
\end{proof}

\begin{remark}
Note that \eqref{eq:direct} relates $\wt{X}$ with the vector field  $(0,X_T)$ on $i^!E\times T\nu_N$, which therefore restricts to $p^!i^!E$. In turn, the isomorphism $\nu(E,i^!E)\cong p^!i^!E$ from Lemma \ref{lem:fibprod} intertwines  this vector field on $p^!i^!E$ with the Euler vector field for $\nu(E,i^!E)$.
(See Equation \eqref{eq:19}.)  We conclude that the isomorphism $\wt{\psi}\colon  p^!i^!E\cong \nu(E,i^!E)\to E|_U$ takes
the Euler vector field to $\wt{X}$. It hence follows from the uniqueness part in
Proposition \ref{prop:euler} that $\wt{\psi}$ is exactly the tubular neighborhood embedding defined by $\wt{X}$.
\end{remark}

\begin{remark}\label{rem:directdesc}
The maps \eqref{eq:direct} generalize to injective morphisms of anchored vector bundles, for all $t\ge 0$,
\begin{equation}\label{eq:fam1a}
E|_U\to E|_U\times T\nu_N,\ \
\xi\mapsto \big(\wt{\lambda}_t(\xi),T\psi^{-1}(\a(\xi))\big).
\end{equation}
Since $\a(\wt{\lambda}_t(\xi))=T\lambda_t(\a(\xi))=T(\lambda_t\circ \psi)(T\psi^{-1}(\a(\xi))$, we see that the image of
\eqref{eq:fam1a} is $\psi^! \lambda_t^!(E|_U)=\kappa_t^!\psi^!E$. Let
\begin{equation}\label{eq:wtpsit}
\wt{\psi}_t\colon \kappa_t^!\psi^!E\to E|_U
\end{equation}
be the inverse map. Note that $\wt{\psi}_t$ has base map $\psi$, for all $t\ge 0$. For $t=0$ it is the isomorphism $\wt{\psi}$ constructed
above, noting that $\psi\circ \kappa_t=i\circ p$.
 For $t>0$, it can be described as the `obvious' isomorphism $\kappa_t^!\psi^!E\to E|_U$ (with base $\psi\circ \kappa_t=\lambda_t\circ \psi
\colon \nu_N\to U$), given by the ordinary vector bundle pullback with respect to the diffeomorphism
$\psi\circ \kappa_t=\lambda_t\circ \psi\colon \nu_N\to U$,
followed by the inverse of the map $\wt{\lambda}_t\colon E|_U\to E_U$ (with base map the inverse of $\lambda_t$).
\end{remark}

\subsection{Functorial properties}\label{subsec:funav}
Let $\Phi\colon (M',N')\to (M,N)$ be a map of pairs, lifting to a morphism of anchored vector bundles $\wt{\Phi}\colon E'\to E$. Suppose that the anchor maps of
$E',E$ are transverse to $i'\colon N'\to M',\ i\colon N\to M$, respectively. The map $\wt{\Phi}$ restricts to a morphism of anchored vector bundles $i'^!E'\to i^!E$, giving rise
to a morphism of anchored vector bundles
\begin{equation}\label{eq:nuPhi}
 \nu(\wt{\Phi})\colon \nu(E',i'^!E')\to \nu(E,i^!E)\end{equation}
with base map $\nu(\Phi)\colon \nu(M',N')\to \nu(M,N)$. On the other hand, the
map $i'^!E'\times \nu_{N'}\to i^!E\times \nu_N$ restricts to a morphism $p'^!i'^!E'\to p^!i^!E$, which coincides with \eqref{eq:nuPhi} under the isomorphism from Lemma \ref{lem:fibprod}. Suppose now that $\wt{X}'\in\aut_{AV}(E')$ is as in the theorem, with $\wt{X}'\sim_{\wt{\Phi}} \wt{X}$.  Then the corresponding tubular neighborhood embeddings give a commutative diagram
\begin{equation}\label{diag:funct2} \xymatrixcolsep{50pt}
\xymatrix{{ {p'}^!{i'}^!E' \cong \nu(E',{i'}^!E')} \ar[r]\ar[d]_{\nu(\wt{\Phi})}    & E'\ar[d]^{\wt{\Phi}}\\{p^!i^!E \cong \nu(E,i^!E)}\ar[r] & E}\end{equation}
where all maps are morphisms of anchored vector bundles.
A similar functorial property holds relative to \emph{co}morphisms of anchored vector bundles. Recall that a {\em comorphism} of (anchored) vector bundles, with base map  $\Phi\colon M'\to M$  is given by a bundle map $\Phi^*E\to E'$ whose graph in $E\times E'$ is an (anchored) subbundle along the graph $\on{Gr}(\Phi)\subset M\times M'$. We write $\wt{\Phi}\colon E'\da E$ for such a `wrong-way' morphism, and $\on{Gr}(\wt{\Phi})\subset E\times E'$ for its graph.  By a discussion similar to that for morphisms,
 one obtains comorphisms $p'^!i'^!E'\da p^!i^!E$, and in the case of
$\wt{X}'\sim_{\wt{\Phi}} \wt{X}$ there is a commutative diagram
\begin{equation}\label{diag:funct4} \xymatrixcolsep{50pt}
\xymatrix{{ {p'}^!{i'}^!E' \cong \nu(E',{i'}^!E')} \ar[r]\ar@{-->}[d]_{\nu(\wt{\Phi})}    & E'\ar@{-->}[d]^{\wt{\Phi}}\\{p^!i^!E \cong \nu(E,i^!E)}\ar[r] & E.}\end{equation}
In fact, one can consider the result for comorphisms as a special case of the result for morphisms, applied to the inclusion map $\on{Gr}(\wt{\Phi})\hra E\times E'$. Observe that if
$\wt{\Phi}\colon E'\to E$ is a comorphism, and $N\subset M$ is a transversal for $E$ which is also transverse to the map $\Phi$, then it is automatic that $N':=\Phi^{-1}(N)$ is a transversal for $E'$.
To see this, let $v'\in TM'|_{N'}$ be given, and let $v=T\Phi(v')\in TM|_N$ its image.
Write $v=v_1+v_2$ where $v_1\in TN$ and $v_2=\a(\xi),\ \xi\in E|_N$.
Let $\xi'$ be the image of $\Phi^*\xi$ under the map $\Phi^*E\to E'$,  with the same base point as $v'$. Then $T\Phi(\a(\xi'))=\a(\xi)$. Putting $v_2'=\a(\xi')$, it follows that $v_1':=v'-v_2'$ is tangent to $N'$.

\subsection{Uniqueness of transverse structures}\label{subsec:uni1}
Let $\psi\colon N\to Q$ be a submersion, with fibers $N_q=\psi^{-1}(q)$.
A \emph{family of anchored vector bundles} $F_q\to N_q$ is an anchored vector bundle $F\to N$ whose anchor is tangent to $\ker(T\psi)$, with $F_q=F|_{N_q}$ the restriction. We will call such a family \emph{infinitesimally trivial} if every
$Z\in  \mf{X}(Q)$ admits a lift $Y\in\mf{X}(N)$ (i.e.,  $Y\sim_\psi Z$) which is
the base vector field of an infinitesimal automorphism
$\wt{Y}\in \aut_{AV}(F)$. Note that in this case, the (local) flow of $Y$ preserves
the fibers of $\psi$, and the (local) flow of $\wt{Y}$ is by (local) isomorphisms of anchored vector bundles.

We interested in the following situation. Suppose that $(E,\a)$ is an anchored vector bundle over $M$, and $\phi\colon N\to M$ a smooth map such that all $i_q=\phi|_{N_q}\colon N_q\to M$  are transversals,  i.e., transverse to the anchor map of $E$. Then $F_q=i_q^!E$ is a family of anchored vector bundles. Here $F\subset \phi^!E$ is the subbundle given as the pre-image of
$\ker(T\psi)$ under the anchor.
\begin{proposition}\label{prop:uniq}
Suppose that $(E,\a)$ is an involutive anchored vector bundle over $M$, and that $i_q\colon N_q\subset M$ is a smooth family of transversals, as above.
 Then the family of anchored vector bundles
$i_q^!E$ is infinitesimally trivial.
\end{proposition}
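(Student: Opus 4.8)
\emph{The plan is to reduce the statement to the already-proven existence of lifts of the anchor map in the involutive case (Proposition \ref{prop:invol}), combined with the pullback construction of Section \ref{subsec:pbAVB}.}

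First I would set up the geometry precisely. We are given the submersion $\psi\colon N\to Q$ and the smooth map $\phi\colon N\to M$ such that each restriction $i_q=\phi|_{N_q}\colon N_q\to M$ is transverse to $\a$. By Proposition \ref{prop:pullbackinvolutive}, since $(E,\a)$ is involutive and $\phi$ is transverse to $\a$, the pullback $\phi^!E$ is an involutive anchored vector bundle over $N$. The family $F\subset \phi^!E$ is the subbundle given as the pre-image of $\ker(T\psi)$ under the anchor of $\phi^!E$, and the goal is to produce, for each $Z\in\mf{X}(Q)$, a lift $Y\in\mf{X}(N)$ with $Y\sim_\psi Z$ that is the base vector field of some $\wt{Y}\in\aut_{AV}(F)$.

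The key step is the following. Given $Z\in\mf{X}(Q)$, choose any $Y_0\in\mf{X}(N)$ with $Y_0\sim_\psi Z$; this is possible since $\psi$ is a submersion (using a partition of unity to patch local horizontal lifts). I claim that $Y_0$ lifts to $\phi^!E$ as the base vector field of an infinitesimal automorphism. Indeed, since $\phi^!E$ is involutive, Proposition \ref{prop:invol} supplies a lift $\wt{\a}\colon\Gamma(\phi^!E)\to\aut_{AV}(\phi^!E)$ of its anchor map. The vector field $Y_0$ need not lie in the image of the anchor of $\phi^!E$, but I would instead work with the family directly: the relevant module is $\a(\Gamma(F))$, consisting of vector fields tangent to the fibers $N_q$. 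To get the lift of a general $Y_0$, the natural approach is to lift through the whole structure $\phi^!E$ rather than through $F$ alone, and then check that the resulting infinitesimal automorphism restricts to $F$. Concretely, I would observe that an element $\wt{Y}\in\aut_{AV}(\phi^!E)$ preserves the anchor, so its base flow preserves the generalized distribution $\a(\Gamma(\phi^!E))$; one must additionally ensure the flow preserves $\ker(T\psi)$ and hence the subbundle $F=\a^{-1}(\ker(T\psi))$, which follows once $Y_0\sim_\psi Z$, because then the base flow of $\wt{Y}$ covers the flow of $Z$ on $Q$ and therefore maps fibers of $\psi$ to fibers of $\psi$.

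The main obstacle, as I see it, is producing an \emph{honest} infinitesimal automorphism $\wt{Y}\in\aut_{AV}(F)$ lifting $Y_0$, not merely a lift of some vector field already in $\a(\Gamma(F))$; the lift $\wt{\a}$ from Proposition \ref{prop:invol} only directly handles vector fields of the form $\a(\sigma)$. I expect the resolution to use the standard technique of choosing an $\a$-connection (as in the proof of Proposition \ref{prop:invol}) relative to a decomposition of $\phi^!E$ that is adapted to the submersion $\psi$: pick a torsion-free $\a$-connection $\nabla$ on $\phi^!E$, and then build $\wt{Y}$ from the horizontal lift of $Y_0$ together with the covariant-derivative operator $D\tau=\nabla_{Y_0}\tau$, which satisfies the derivation property \eqref{eq:derprop} and is compatible with the anchor \eqref{eq:anchaut} precisely because $\nabla$ is torsion-free. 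Verifying that this $D$ restricts to sections of $F$ — i.e. that $D_{Y_0}$ preserves $\Gamma(F)$ whenever $Y_0\sim_\psi Z$ — is the point where the hypothesis $Y\sim_\psi Z$ is genuinely used, and is the step I would write out carefully, the rest being routine. Patching the local constructions with a partition of unity gives the global lift, completing the proof.
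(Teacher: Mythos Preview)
Your proposal has a genuine gap at the step you yourself flag as ``the main obstacle.''  The operator $D\tau=\nabla_{Y_0}\tau$ does not make sense for an $\a$-connection: by definition, $\nabla$ is indexed by sections of $\phi^!E$, not by vector fields on $N$, so $\nabla_{Y_0}$ is undefined unless $Y_0=\a(\sigma)$ for some $\sigma\in\Gamma(\phi^!E)$ --- which is exactly what you said you could not assume.  If instead you mean an ordinary vector bundle connection $\nabla'$ on $\phi^!E$, then $D\tau=\nabla'_{Y_0}\tau$ is well-defined and satisfies the derivation property, but it does \emph{not} satisfy the anchor compatibility \eqref{eq:anchaut}: there is no reason for $\a(\nabla'_{Y_0}\tau)=[Y_0,\a(\tau)]$ to hold.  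Attempting to correct this by a bundle endomorphism requires $[Y_0,\a(\tau)]$ to land pointwise in the image of the anchor, which is not guaranteed for an arbitrary $Y_0$.

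The paper resolves this by proving one extra fact that you do not mention: the fibers $N_q\subset N$ are themselves transversals for $\phi^!E$.  This follows from the transversality of each $i_q$ together with the fiber-product description of $\phi^!E$, and it is equivalent to saying that $T\psi\circ\a_{\phi^!E}\colon \phi^!E\to TQ$ is fiberwise surjective.  Once you have this, any $Z\in\mf{X}(Q)$ lifts to $Y=\a(\sigma)$ for some section $\sigma\in\Gamma(\phi^!E)$, and then Proposition~\ref{prop:invol} directly supplies $\wt{Y}=\wt{\a}(\sigma)\in\aut_{AV}(\phi^!E)$.  Your argument that such a $\wt{Y}$ then restricts to $\aut_{AV}(F)$ (because $Y\sim_\psi Z$ forces the flow to preserve fibers and hence $\ker(T\psi)$) is correct and is essentially what the paper does.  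So the missing idea is precisely that transversality lets you take $Y$ in the image of the anchor from the start, after which no connection-based workaround is needed.
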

\begin{proof} By Proposition \ref{prop:pullbackinvolutive}, the bundle $\phi^!E$ is involutive. We claim that the fibers $N_q\subset N$ are transversals for $\phi^!E$. Indeed, given $y\in N$, with image $x=\phi(y)$, and any $w\in T_yN$,
the image $v=T\phi(w)$ can be written as a sum $v_1+v_2$, where
$v_1\in \a(E)_x$ and $v_2\in T_x(i_q(N_q))$. Let $w_2\in T_y N_q$
be the pre-image. Then $w_1=w-w_2$ satisfies $T\phi(w_1)=v-v_2=v_1$, hence
$w_1\in \a(\phi^!E)_y$, proving the claim.

The transversality implies that the bundle map $T\psi\circ \a_{\phi^!E}\colon \phi^!E\to TQ$ is fiberwise onto. Hence, for any given $Z\in\mf{X}(Q)$ there is a section  $\sigma\in \Gamma(\phi^!E)$ such that its image under the anchor, denoted by $Y$, satisfies  $Y\sim_\psi Z$. Furthermore, by Proposition \ref{prop:invol} it admits a lift
 $\wt{Y}\in \aut_{AV}(\phi^!E)$. Since $\wt{Y}$ is related under the anchor map to the
 tangent lift $Y_T$, and the latter is tangent to $\ker(T\psi)$ (due to $Y\sim_\psi Z$), it is automatic that $\wt{Y}$ is tangent to $F
 \subset \phi^!E$. Hence, $\wt{Y}$ restricts to an element of $\aut_{AV}(F)$.
\end{proof}
As a special case, we obtain:
\begin{corollary}\label{cor:uniqlocal}
Let $(E,\a)$ be an involutive anchored vector bundle over $M$, $S$ a leaf, and $m_0, m_1 \in S$. Let $N_0$ and $N_1$ be transversals of dimension $\dim M-\dim S$, with inclusions $i_0$ and $i_1$, and such that $N_0\cap S=\{m_0\}$ and $N_1\cap S=\{m_1\}$. Then, after replacing the $N_i$ with smaller neighborhoods of $m_i\in N_i$ if necessary,  there is an isomorphism of the induced anchored vector bundles $i_0^! E \to i_1^! E$, taking $m_0$ to $m_1$.
\end{corollary}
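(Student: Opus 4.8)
The plan is to realize the two transversals $N_0,N_1$ as the extreme fibers of a single smooth family of transversals interpolating along a path in the leaf $S$, and then to transport one onto the other using the infinitesimal triviality supplied by Proposition \ref{prop:uniq}. In other words, I would manufacture data $\psi\colon N\to Q$ and $\phi\colon N\to M$ exactly as in Subsection \ref{subsec:uni1}, with $N_0,N_1$ occurring as the fibers over the endpoints, and then flow the lift of a vector field on $Q$ to produce the desired isomorphism $i_0^!E\to i_1^!E$.

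First I would build the family. Since $S$ is connected, choose a smooth path $\gamma\colon Q\to S$, with $Q$ an open interval containing $[0,1]$, such that $\gamma(0)=m_0$, $\gamma(1)=m_1$, and $\gamma$ is constant near $t=0$ and $t=1$. At each $\gamma(t)$ the transversality hypothesis means that $T_{\gamma(t)}S=\a(E_{\gamma(t)})$ admits complements $V_t\subset T_{\gamma(t)}M$ of dimension $\dim M-\dim S$; since such complements form an affine, hence contractible, bundle over $Q$, I can choose them to vary smoothly and to equal $T_{m_0}N_0$ and $T_{m_1}N_1$ at the endpoints. Exponentiating small disks in $V_t$ (for an auxiliary metric) yields a smooth family of transversal submanifolds $N_t\subset M$ through $\gamma(t)$; on the constant pieces near $t=0,1$ I take $N_t$ to be neighborhoods of $m_i$ in $N_0$ and $N_1$ themselves. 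Then $N=\{(t,x):t\in Q,\ x\in N_t\}$ carries the submersion $\psi\colon N\to Q$, $(t,x)\mapsto t$, and the map $\phi\colon N\to M$, $(t,x)\mapsto x$ restricts on each fiber to a transversal $i_t=\phi|_{N_t}$ near the marked point $\gamma(t)$ (shrinking the disks if needed). These marked points assemble into a section $s\colon Q\to N$, $s(t)=(t,\gamma(t))$, with $\phi\circ s=\gamma$.

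Next I would apply Proposition \ref{prop:uniq} to the family $F_t=i_t^!E$, which is infinitesimally trivial. For $Z=\p/\p t\in\mf{X}(Q)$ the proposition produces a section $\sigma\in\Gamma(\phi^!E)$ whose anchor image $Y=\a(\sigma)$ satisfies $Y\sim_\psi Z$ and lifts to $\wt{Y}\in\aut_{AV}(F)$. Here I would exploit the freedom in the choice of $\sigma$: since $\dot\gamma(t)\in T_{\gamma(t)}S=\a(E_{\gamma(t)})$, I can prescribe $\sigma$ along the curve $s$ so that $Y(s(t))=s_*(\p/\p t)$, i.e. so that the marked section $s$ is itself an integral curve of $Y$; the global condition $Y\sim_\psi Z$ is compatible with this prescription along $s$ and can be met by a partition-of-unity extension. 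The flow of $\wt{Y}$ for time $1$ is then an isomorphism of anchored vector bundles $F_0\to F_1$, that is $i_0^!E\to i_1^!E$, whose base map sends $s(0)=(0,m_0)$ to $s(1)=(1,m_1)$, hence $m_0\mapsto m_1$.

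The remaining point, and where I expect the main technical care, is the completeness of this flow: the lift $Y$ need not be complete, so its time-$1$ flow may not be globally defined on $N$. Because $s([0,1])$ is compact and, by the tracking property above, is an integral curve of $Y$ defined for all $t\in[0,1]$, a tube-lemma argument produces an open neighborhood of $s([0,1])$ in $N$ on which the time-$1$ flow of $\wt{Y}$ is defined. Restricting to this neighborhood is precisely the replacement of $N_0,N_1$ by the smaller neighborhoods of $m_0,m_1$ permitted in the statement, and gives the claimed isomorphism. Thus the genuine obstacle is the construction of the interpolating family together with the tracking lift (interpolating two a priori unrelated transversals based at different points of $S$, and arranging $\sigma$ along $s$); once these are in place, the transport via the flow and the completeness reduction by shrinking are routine.
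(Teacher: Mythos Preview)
Your proposal is correct and follows the same overall strategy as the paper: build a one-parameter family of transversals $N_s$ interpolating between $N_0$ and $N_1$ along a path $\gamma$ in $S$, apply Proposition~\ref{prop:uniq} to lift $\partial/\partial t$ to $Y$ and $\wt{Y}\in\aut_{AV}(F)$, and use the time-$1$ flow of $\wt{Y}$ (defined on a neighborhood of the marked section by compactness) to produce the isomorphism.

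The one point where you diverge from the paper is in arranging that $Y$ tracks the marked section $s\colon t\mapsto (t,\gamma(t))$. You do this by hand: you prescribe $\sigma$ along $s$ so that $\a(\sigma(s(t)))=s_*(\partial/\partial t)$, and then extend by a partition of unity. This works, but the paper avoids the extra construction by observing that the tracking is \emph{automatic}. Since $Y=\a(\sigma)$ lies in the image of the anchor of $\phi^!E$, it is tangent to the leaves of $\phi^!E$; and the marked section $s$ is itself a single leaf of $\phi^!E$, because at each point $(t,m_t)$ the anchor image of $\phi^!E$ is exactly $\R\cdot s_*(\partial/\partial t)$ (this uses $T_{m_t}N_t\cap T_{m_t}S=0$). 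Hence $Y|_{s(Q)}$ is already tangent to $s(Q)$, and together with $Y\sim_\psi\partial/\partial t$ this forces $Y|_{s(Q)}=s_*(\partial/\partial t)$ without any adjustment of $\sigma$. The paper's route is thus a bit cleaner and more conceptual, while yours is more hands-on; both reach the same conclusion.
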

\begin{proof}
Given a smooth path $\R\to S,\ s\mapsto m_s$, taking on the given values at $s=0,1$, one can find a family of transversals $i_s\colon N_s\to M$ with $N_s\cap S=\{m_s\}$ for all $s$. That is, the union of $N_s\subset M\times\{s\}$ defines a submanifold $N\subset M\times \R$. The maps $\phi\colon N\to M$ and $\psi\colon N\to \R$ are given by projections to the two factors.  By Proposition \ref{prop:uniq}, or rather its proof, there exists a $\wt{Y}\in \aut_{AV}(\phi^!E)$ such that the base vector field $Y$ is tangent to the leaves of $\phi^!E$ and satisfies $Y:=\a(\sigma)\sim_\psi \f{\p}{\p s}$. As argued above, $\wt{Y}$  preserves $F\subset \phi^!E$, where $F_s=i_s^!E$.

The path $s\mapsto m_s\in N_s\cap S$ defines a section $\R\to N$ of the submersion $\psi$. By construction, this is a single leaf of $\phi^!E$.
Since $Y$ is tangent to the leaves,  its restriction to $\R\subset N$ is just
$\f{\p}{\p s}$. In particular, there exists a neighborhood of $m_0$ in $N_0$ over which the  flow of $Y$ (and hence also of $\wt{Y}$) is defined for time $1$. The time 1-flow of
$\wt{Y}$ gives the desired isomorphism of anchored vector bundles $i_0^!E\to i_1^!E$ over possibly smaller neighborhoods of $m_i$ in $N_i$.
\end{proof}

\section{Lie algebroids}\label{sec:lialg}
Suppose that $(E,\a,[\cdot,\cdot])$ is a Lie algebroid over $M$. Thus $(E,\a)$ is an anchored vector bundle, with a Lie bracket
$[\cdot,\cdot]$ on its space $\Gamma(E)$ of sections satisfying the compatibility property
\[ [\sigma,f\tau]=f[\sigma,\tau]+(\a(\sigma)f)\, \tau,\]
for $\sig,\tau\in\Gamma(E)$ and $f\in C^\infty(M)$.
As is well-known, this implies that $\a\colon \Gamma(E)\to \mf{X}(M)$ preserves Lie brackets.
We denote by $\on{Aut}_{LA}(E)$ the automorphisms of $E$ preserving the Lie algebroid
structure, and by $\aut_{LA}(E)$ the infinitesimal automorphisms, consisting of all $\wt{X}\in \aut_{AV}(E)$ such that the corresponding operator $D$ on sections is a derivation of the Lie bracket. In particular, the operators $D_{\sigma}=[\sigma,\cdot]$ define infinitesimal automorphisms $\wt{\a}(\sigma)\in \aut_{LA}(E)$. The resulting lift
\begin{equation}\label{eq:lalift}
\wt{\a}\colon \Gamma(E)\to \aut_{LA}(E).
\end{equation}
has the property \eqref{eq:fsig1}.

\subsection{Normal form theorem}
Given a smooth map $\varphi\colon N\to M$ transverse to $\a$, the anchored vector bundle $\varphi^!E$ over $N$ inherits a unique Lie algebroid structure, in such a way that the diagonal map $\varphi^!E\to E\times TN$ is an inclusion as a Lie subalgebroid.  See \cite[Section 4.3]{mac:gen} or \cite{lib:cou}. For any transversal  $i\colon N\hra M$,  with normal bundle $p\colon \nu_N\to N$, we obtain a Lie algebroid $p^!i^!E\to \nu_N$.
Theorem \ref{th:anchored} has the following refinement:
\begin{theorem}\label{th:liealgebroids}
Let $(E,\a,[\cdot,\cdot])$ be a Lie algebroid over $M$, and let $N\subset M$ be a transversal. Choose $\epsilon\in \Gamma(E)$ with $\eps|_N=0$, such that $X=\a(\epsilon)$ is Euler-like. The choice of $\epsilon$ determines a tubular neighborhood embedding
$\psi\colon \nu_N\to M$ with an  isomorphism of Lie algebroids
\[ \wt{\psi}\colon p^!i^!E\to E|_U.\]
Given a $G$-action by Lie algebroid automorphisms of $E$, and a $G$-equivariant
choice of  $\epsilon$, the isomorphism $\wt{\psi}$ is $G$-equivariant.
\end{theorem}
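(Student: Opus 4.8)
The plan is to upgrade the anchored vector bundle isomorphism from Theorem \ref{th:anchored} to a Lie algebroid isomorphism, essentially for free, by exploiting the functoriality built into the construction. First I would invoke Theorem \ref{th:anchored} with the specific lift $\wt{\a}\colon \Gamma(E)\to \aut_{LA}(E)$ given by the Lie bracket, namely \eqref{eq:lalift}, which we already know satisfies property \eqref{eq:fsig1}. Setting $\wt{X}=\wt{\a}(\epsilon)$, Theorem \ref{th:anchored}\eqref{it:tildex} tells us that $\wt{X}$ vanishes along $i^!E$ and that the base field $X=\a(\epsilon)$ is Euler-like; part \eqref{it:family} then produces the tubular neighborhood embedding $\psi$ together with the isomorphism $\wt{\psi}\colon p^!i^!E\to E|_U$ of anchored vector bundles. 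The entire content of the refinement is to check that this same $\wt{\psi}$ is in fact a morphism of Lie algebroids.

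The key point is that $\wt{X}=\wt{\a}(\epsilon)$ now lies in $\aut_{LA}(E)$, not merely in $\aut_{AV}(E)$: its corresponding operator $D_\epsilon=[\epsilon,\cdot]$ is a derivation of the Lie bracket. Consequently its flow $\wt{\Phi}_s$, and hence each $\wt{\lambda}_t=\wt{\Phi}_{-\log(t)}$ for $t>0$, acts by \emph{Lie algebroid} automorphisms of $E|_U$, not just anchored vector bundle automorphisms. Since $\aut_{LA}(E)$ is a closed condition (preservation of the bracket passes to limits of automorphisms), the limit $\wt{\lambda}_0$ at $t=0$ is again a Lie algebroid morphism. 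The map \eqref{eq:direct} building $\wt{\psi}^{-1}$ is assembled from $\wt{\lambda}_0$ together with the tangent-lift data $T\psi^{-1}\circ\a$, and the target $p^!i^!E\subset i^!E\times T\nu_N$ carries its canonical Lie algebroid structure as a fiber-product/subalgebroid of the product Lie algebroid $i^!E\times T\nu_N$ (using Proposition \ref{prop:pullbackinvolutive} and the remarks on $\varphi^!E$). So one need only verify that \eqref{eq:direct} intertwines brackets.

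I would organize this bracket-compatibility check through the one-parameter family $\wt\psi_t\colon \kappa_t^!\psi^!E\to E|_U$ of Remark \ref{rem:directdesc}. For each $t>0$, $\wt\psi_t$ is the `obvious' isomorphism: an ordinary vector-bundle pullback along the diffeomorphism $\psi\circ\kappa_t$ followed by $\wt{\lambda}_t^{-1}$, and since both $\kappa_t^!\psi^!E$ and $E|_U$ carry their natural pullback Lie algebroid structures and $\wt{\lambda}_t$ is a Lie algebroid automorphism, $\wt\psi_t$ is manifestly a Lie algebroid isomorphism for $t>0$. Letting $t\to 0$ and using that $\wt\psi_t$ depends smoothly on $t$ with $\wt\psi_0=\wt\psi$, the bracket-preservation property, being a closed condition on sections, persists in the limit. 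Equivariance is inherited exactly as in Theorem \ref{th:anchored}: a $G$-equivariant choice of $\epsilon$ makes $\wt{X}$ $G$-invariant, whence all flows and limits, and therefore $\wt\psi$, are $G$-equivariant.

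The main obstacle I anticipate is handling the degeneration at $t=0$ carefully. For $t>0$ the source bundle $\kappa_t^!\psi^!E$ genuinely varies (the base map $\psi\circ\kappa_t=\lambda_t\circ\psi$ is a diffeomorphism onto $U$), whereas at $t=0$ the base map collapses via $\psi\circ\kappa_0=i\circ p$ and the source becomes $p^!i^!E$, a pullback along the non-injective $p$. One must confirm that the Lie algebroid structure on $\kappa_t^!\psi^!E$ varies smoothly through $t=0$ and limits to the standard structure on $p^!i^!E$, so that the limiting morphism really is bracket-preserving rather than merely anchor-preserving. Once this smooth-family argument is made rigorous, matching the anchor computation $\a(\wt{\lambda}_0(\xi))=Tp(T\psi^{-1}(\a(\xi)))$ already recorded in the proof of Theorem \ref{th:anchored}, the Lie algebroid refinement follows.
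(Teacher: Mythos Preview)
Your proposal is correct and follows essentially the same route as the paper: use the distinguished lift \eqref{eq:lalift} so that $\wt{X}=\wt{\a}(\epsilon)\in\aut_{LA}(E)$, invoke the family $\wt{\psi}_t$ from Remark~\ref{rem:directdesc}, observe that $\wt{\psi}_t$ is a Lie algebroid isomorphism for $t>0$ (being built from the Lie algebroid automorphism $\wt{\lambda}_t$), and pass to the limit $t\to 0$ by continuity. The obstacle you flag about the smooth degeneration at $t=0$ is the one nontrivial point; the paper handles it simply by appealing to continuity of the bracket condition, which is legitimate since the family \eqref{eq:fam1a} is manifestly smooth in $t\ge 0$.
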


\begin{proof}
 We use the same construction as in the proof of Theorem \ref{th:anchored}, but with the distinguished lift  \eqref{eq:lalift}. As discussed in Remark \ref{rem:directdesc}, the vector field $\wt{X}=\wt{\a}(\epsilon)$ determines a family of isomorphisms of anchored vector bundles $\wt{\psi}_t\colon \kappa_t^!\psi^!E\to E|_U$, for all $t\ge 0$.  For all $t>0$, these are given by the Lie algebroid automorphisms $\wt{\lambda}_t$,
and in particular preserve Lie brackets on sections. Hence, by continuity the map $\wt{\psi}_0$ preserves Lie brackets as well.
\end{proof}
If the normal bundle is trivial, $\nu_N=N\times \P$, then we obtain the simpler model
\[ p^!i^!E=i^!E\times T\P\]
\emph{as Lie algebroids}. In particular, we obtain:
\begin{corollary}[Local splitting of Lie algebroids]\label{cor:lialgebroids}
Let $(E,\a,[\cdot,\cdot])$ be a Lie algebroid over $M$, and $m\in M$. Let $i\colon N\hra  M$ be a submanifold containing $m$, such that $T_mN$ is a complement to $\P=\a_m(E_m)$ in
$T_mM$.  Then Lie algebroid $E$ is isomorphic, near $m$, to the direct product of Lie algebroids
$i^!E\times T\P$. If a compact Lie group $G$ acts on $E$ by Lie algebroid automorphisms, such that the action on $M$ fixes $m$ and preserves $N$, this isomorphism can be chosen $G$-equivariant.
\end{corollary}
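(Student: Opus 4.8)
The plan is to derive this directly from Theorem \ref{th:liealgebroids}, after trivializing the normal bundle. First I would note that the hypothesis $T_mN\oplus\P=T_mM$, with $\P=\a_m(E_m)$, says precisely that $N$ is transverse to the anchor at $m$; by openness of transversality, after shrinking $N$ to a smaller neighborhood of $m$ it becomes a transversal in the sense of the paper. Applying Theorem \ref{th:liealgebroids} then yields, for a suitable $\epsilon\in\Gamma(E)$ with $\epsilon|_N=0$ and $X=\a(\epsilon)$ Euler-like, a Lie algebroid isomorphism $\wt\psi\colon p^!i^!E\to E|_U$ over a tubular neighborhood embedding $\psi\colon\nu_N\to U$. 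The anchor identifies the fiber $\nu_N|_m=T_mM/T_mN$ with $\P$, and since any vector bundle is trivial over a sufficiently small neighborhood of a point, after further shrinking $N$ we may write $\nu_N=N\times\P$. Under such a trivialization, the pullback-along-projection property from Section \ref{subsec:pbAVB} (the case of a product $N\times\P$ projecting to $N$) gives $p^!i^!E=i^!E\times T\P$ \emph{as Lie algebroids}, exactly as recorded just before the statement; composing with $\wt\psi$ then produces the asserted isomorphism $E|_U\cong i^!E\times T\P$ near $m$, settling the non-equivariant case.

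For the equivariant statement, every step must be carried out $G$-equivariantly. Since $G$ acts by Lie algebroid automorphisms and fixes $m$, the anchor is $G$-equivariant and $\P=\a_m(E_m)$ is a $G$-invariant subspace of $T_mM$; likewise $T_mN$ is $G$-invariant because $G$ preserves $N$ and fixes $m$. By the $G$-equivariant part of Theorem \ref{th:liealgebroids}, the section $\epsilon$ may be chosen $G$-invariant, which makes $\wt\psi$ equivariant. The only delicate point—and where I expect the main obstacle to lie—is to trivialize $\nu_N$ \emph{equivariantly} near $m$: a generic local trivialization need not respect the $G$-action, and the radial contraction of a neighborhood of $m$ used implicitly in the non-equivariant argument is not a priori $G$-equivariant.

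To resolve this, I would use that $G$ is compact and fixes $m\in N$. By Bochner's linearization theorem one can choose $G$-equivariant coordinates identifying a neighborhood of $m$ in $N$ with a neighborhood of $0$ in the $G$-representation $T_mN$; such a neighborhood can be taken $G$-invariant and star-shaped, hence $G$-equivariantly contractible to $0$ via the linear scaling $x\mapsto tx$, which commutes with the linear $G$-action. A $G$-equivariant vector bundle over a $G$-equivariantly contractible base is equivariantly isomorphic to the trivial bundle with fiber the representation at the base point, here $\nu_N|_m\cong\P$. This provides a $G$-equivariant trivialization $\nu_N\cong N\times\P$, so that the identification $p^!i^!E=i^!E\times T\P$ is itself $G$-equivariant. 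Composing with the equivariant $\wt\psi$ then gives the required $G$-equivariant product decomposition, completing the proof.
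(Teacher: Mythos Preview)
Your proposal is correct and follows essentially the same route as the paper: derive the corollary from Theorem~\ref{th:liealgebroids} by locally trivializing the normal bundle as $\nu_N=N\times\P$, so that $p^!i^!E=i^!E\times T\P$ as Lie algebroids. The paper states this as an immediate consequence of the sentence preceding the corollary and gives no further argument; your Bochner-linearization justification for the $G$-equivariant trivialization of $\nu_N$ simply makes explicit a step the paper leaves to the reader.
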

For $G=\{1\}$ this result is due to Weinstein \cite{wei:alm}, Fernandes \cite{fer:lie}, and Dufour \cite{duf:nor}.

\subsection{Functorial properties}\label{subsec:funla}
The functorial properties of the construction are analogous to those for anchored vector bundles. Of particular interest is the functoriality with respect to Lie algebroid \emph{co}morphisms $\wt{\Phi}\colon E'\dasharrow  E$.

\begin{example}
Recall that any Poisson structure $\pi$ on $M$ makes $T^*M$ into a Lie algebroid, with anchor $\pi^\sharp\colon T^*M\to T^*M'$. Any Poisson map $M'\to M$ defines a comorphism of Lie algebroids $T^*M'\dasharrow T^*M$.
\end{example}
As remarked in Section \ref{subsec:funav}, if $N\subset M$ is a transversal for
$E$ which is also transverse to the map $\Phi$, then its pre-image $N'=\Phi^{-1}(N)$ is transversal for $E'$. Furthermore, if
$\epsilon\in \Gamma(E)$ has the properties in Theorem \ref{th:liealgebroids}, then
its image $\epsilon'$ under the map $\Gamma(E)\to \Gamma(E')$ determined by the comorphism has similar properties, with respect to $E'$.
Hence, in this situation \eqref{diag:funct4} is a commutative diagram of Lie algebroid comorphisms.

\subsection{Uniqueness of transverse structures}\label{subsec:uni2}
Just as in Section \ref{subsec:uni1}, we can consider families of Lie algebroids
$F_q\to N_q$, where $N_q$ are the fibers of a submersion $\psi\colon N\to Q$. For the  \emph{infinitesimal triviality} of such a family, one requires that for every $X\in\mf{X}(Q)$ there exists $\wt{Y}\in \aut_{LA}(E)$ such that $Y\sim_\psi X$. The same argument as in
Section \ref{subsec:uni1} shows that if $i_q\colon N_q\to M$ is a family of transversals for a Lie algebroid $(E,\a)$ then the family of Lie algebroids $i_q^!E$ is infinitesimally trivial. As a consequence, one obtains the natural analogue of Corollary~\ref{cor:uniqlocal} for Lie algebroids, which recovers  \cite[Thm.~1.2]{fer:lie}.

\subsection{Lie groupoids}
Suppose that $\G\rra M$ is a Lie groupoid, with source and target maps $\mathsf{s},\mathsf{t}\colon \G\to M$. The anchor $\a$ of its
Lie algebroid $E:=\nu(\G,M)$ is induced by $T\mathsf{s}-T\mathsf{t}\colon T\G\to TM$.
Using that $\ker (T_g\mathsf{t})$ (the tangent space to the $\mathsf{t}$-fiber at $g\in\G$)
is spanned by the left-invariant vector fields,
one sees that
\[ T_g\mathsf{s}(\ker (T_g\mathsf{t}))=\on{ran}(\a_{\mathsf{t}(g)}).\]
Hence, a smooth map
$\varphi\colon N\to M$ is transverse to $\a$ if and only if it is transverse to the restriction of $\mathsf{s}$ to every
$\mathsf{t}$-fiber. (Equivalently, it is transverse to the restriction of $\mathsf{t}$ to every
$\mathsf{s}$-fiber.)
In this case, there is a well-defined \emph{pull-back  Lie groupoid} \footnote{The condition is equivalent to transversality
of the maps $(\mathsf{t},\mathsf{s})\colon \G\to M\times M$ and $(\varphi,\varphi)\colon N\times N\to M\times M$;
hence $\varphi^!\G$ can be also regarded as a fibered product with respect to these two maps. See \cite[Appendix A]{bur:vec} for a general discussion, including a simplification of \cite[Proposition 2.3.1]{mac:gen}.}
%
\[ \varphi^!\G\rra N\]
where $\varphi^!\G=N\times_M \G\times_M N$ is the fiber product with respect to source and target maps. Here the transversality assumption ensures that the map $N\times_M \G\to M$ induced by the source map is transverse to $\varphi$; hence the second fiber product is well-defined. It also ensures that source and target for $\varphi^!\G$ are surjective submersions. The Lie algebroid of $\varphi^!\G$ is
$\varphi^!E$.

A \emph{transversal} for $\G$  is a submanifold $i\colon N\hra M$  such that $i$ is transverse to $\a$.  In this case,
$i^!\G=\G|_N=\mathsf{s}^{-1}(N)\cap \mathsf{t}^{-1}(N)$. Choose $\eps\in\Gamma(E)$ such that $X=\a(\eps)$ is Euler-like, defining a tubular neighborhood embedding $\psi\colon \nu_N\to U\subset M$.  The section $\eps$ defines a left-invariant vector field
$\eps^L\in\mf{X}(\G)$ tangent to the $\mathsf{t}$-fibers, and a right-invariant vector field $\eps^R\in \mf{X}(\G)$ tangent to the $\mathsf{s}$-fibers.
The difference
$\wt{X}=\eps^L-\eps^R$ is an infinitesimal Lie groupoid automorphism, related to
$X$ under both source and target maps, and is Euler-like along $i^!\G\subset \G$.  Similarly to Theorem \ref{th:liealgebroids}, using the flow of $\eps^L-\eps^R$
one obtains an isomorphism  of Lie groupoids
\[ \wt{\psi}\colon p^!i^!\G\xrightarrow{\cong} \G|_U\]
where  $\G|_U=\mathsf{s}^{-1}(U)\cap \mathsf{t}^{-1}(U)$.
In fact, one obtains a family $\wt{\psi}_t\colon \kappa_t^!\psi^!\G\to \G|_U$ of
groupoid isomorphisms, reducing to $\wt{\psi}$ at $t=0$. These are obtained as inverses of the maps
\[ \G|_U\xrightarrow{\cong}\kappa_t^!\psi^!\G\subset \G\times \nu_N\times \nu_N,\ \ g\mapsto \Big(\wt{\lambda}_t(g),\ \psi^{-1}\big(\mathsf{t}(g)\big),\ \psi^{-1}\big(\mathsf{s}(g)\big)\Big).\]

\section{Dirac manifolds}\label{sec:dirac}

We next obtain normal form theorems and splitting theorems for Dirac manifolds.
\subsection{Dirac structures}
We begin by recalling the definition of the standard \emph{Courant algebroid} $\T M$  over a manifold $M$, with possible twisting by a closed 3-form $\eta\in \Om^3(M)$.
References include Courant's original paper \cite{cou:di} as well as \cite{lib:cou,roy:co,sev:let,sev:poi}. We let
\[ \T M=TM\oplus T^*M\]
be equipped with the symmetric bilinear form
$\l v_1+\mu_1,v_2+\mu_2\r=\l\mu_1,v_2\r+\l\mu_2,v_1\r$,
the anchor $\a\colon \T M\to TM$ given by $v+\mu\mapsto v$,
and the \emph{Courant bracket} $\Cour{\cdot,\cdot}$ on its space of sections,
\[ \Cour{X_1+\alpha_1,X_2+\alpha_2}=[X_1,X_2]+\ca{L}_{X_1}\alpha_2-\iota_{X_2}\d \alpha_1
+\iota_{X_1}\iota_{X_2}\eta
\]
for vector fields $X_i$ and 1-forms $\alpha_i$.

A \emph{Dirac structure} on $M$ (relative to $\eta$)  is a subbundle $E\subset \T M$ such that $E=E^\perp$, and
such that the space of sections of  $E$ is closed under the bracket $\Cour{\cdot,\cdot}$. Dirac structures are Lie algebroids, with the anchor $\a_E$ and bracket $[\cdot,\cdot]$ obtained by restriction from $\T M$.
If $\eta=0$, then a Dirac structure with $E\cap TM=\{0\}$ is of the form $E=\on{Gr}(\pi)$, where
$\pi\in \Gamma(\wedge^2 TM)$ is a Poisson structure, and $\on{Gr}(\pi)$ is the graph of the bundle map $T^*M\to TM$ defined by $\pi$.
For any 2-form $\omega$ we define the \emph{B-field transform} $\ca{R}_\omega: \T M \to \T M$,
\[ \ca{R}_\omega(v+\mu)=v+\mu+\iota_v\omega,\]
and put $E^\omega=\ca{R}_\omega(E)$. Then  $E^\omega$ is a Dirac structure relative to the
3-form $\eta+\d\omega$. Given a smooth map $\varphi\colon N\to M$, define
\[ \varphi^!E = \{v'+\mu'\in \T N |\ \exists v+\mu\in E \colon v=T\varphi(v'),\ \mu'=\varphi^*\mu\}.\]
If $\varphi$ is transverse to $\a_E$, then $\varphi^!E\subset \T N$ is a Dirac structure relative  to $\varphi^*\eta$; as a Lie algebroid it coincides with the pull-back Lie algebroid discussed in Section \ref{sec:lialg}.  Given a 2-form $\omega$ on $M$, one finds that
\[
(\varphi^!E)^{\varphi^*\omega} = \varphi^!(E^{\omega}).
\]
If $\varphi$ is a diffeomorphism, and letting
\[ \T \varphi \colon  \T N \to \T M,\ \ \ v + \mu\mapsto T\varphi(v) + (T\varphi^{-1})^*(\mu) \]
(the sum of tangent and cotangent lifts), we have that $\varphi^!E=\T\varphi^{-1}(E)$.

\subsection{The normal form theorem}
A submanifold $i\colon N\hra M$ is called a \emph{transversal} for the Dirac structure $E$ if it is transverse to the
anchor of $E$. In this case, we obtain Dirac structures
$i^!E\subset \T N$ relative to $i^*\eta$ and $p^!i^!E\subset \T \nu_N$ relative to $p^*i^*\eta$.

\begin{theorem}[Normal form for Dirac stuctures]\label{th:dirac}
Let $E\subset \T M$ be a Dirac structure relative to $\eta$, and $i\colon N\hra M$ a transversal.  Choose $\epsilon=X+\alpha \in \Gamma(E)$  with $\epsilon|_N=0$, such that $X$ is Euler-like along $N$, and let $\psi\colon \nu_N\to U\subset M$ be the resulting tubular neighborhood embedding. Then $\T\psi\colon \T\nu_N\to \T M$ restricts to an isomorphism of Dirac structures
\[ (p^!i^!E)^{\omega}\to E|_U,\]
where $\omega\in \Om^2(\nu_N)$ is the 2-form
\begin{equation}\label{eq:w} \omega=\int_0^1 \f{1}{\tau}\kappa_\tau ^* \psi^*(\d\alpha+\iota_X\eta)\ \d \tau .\end{equation}
Given a proper $G$-action on $M$, preserving $\eta$ and such that its lift to $\T M$
preserves $E$, one can choose $\epsilon$ to be $G$-invariant. The resulting $\omega$ is then $G$-invariant, and the isomorphism $\T\psi$ is $G$-equivariant.
\end{theorem}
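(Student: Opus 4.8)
The plan is to reduce Theorem \ref{th:dirac} to the Lie algebroid normal form Theorem \ref{th:liealgebroids}, with the Courant-bracket data tracked as an extra layer on top. Recall that a Dirac structure $E\subset \T M$ is in particular a Lie algebroid, and that the section $\epsilon=X+\alpha$ projects under the anchor to the Euler-like field $X$. So Theorem \ref{th:liealgebroids} already produces the tubular neighborhood embedding $\psi\colon \nu_N\to U$ and, via the lift $\wt{\a}(\epsilon)$ given by the bracket $\Cour{\epsilon,\cdot}$, an isomorphism of the underlying Lie algebroids $p^!i^!E\to E|_U$. The task is therefore to show that, \emph{as a sub-bundle of }$\T M$, this abstract Lie-algebroid isomorphism is realized by $\T\psi$ after the $B$-field correction $\ca{R}_\omega$, with $\omega$ given by \eqref{eq:w}. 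The point is that $\T\psi$ is already a Courant-algebroid isomorphism $\T\nu_N\to\T M$ relative to the 3-form $\psi^*\eta$, so it preserves the bilinear form and sends Dirac structures to Dirac structures; what it need \emph{not} do is send $p^!i^!E$ exactly to $E|_U$, and the discrepancy is precisely a $B$-field.

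The key computation is to identify that discrepancy. First I would record that, by construction, $\epsilon=X+\alpha$ is $\wt{X}$-invariant in the sense that its flow $\wt{\lambda}_t=\wt{\Phi}_{-\log t}$ preserves $E$ (the flow of an infinitesimal Dirac automorphism), and that pulling $E|_U$ back along $\psi$ via $\T\psi^{-1}$ gives a family of Dirac structures $\kappa_t^!\psi^!E$ degenerating, as $t\to 0$, to $p^!i^!E$ — exactly as in Remark \ref{rem:directdesc}. The failure of $\psi^*E:=\T\psi^{-1}(E|_U)$ to equal $p^!i^!E$ is measured by how the twisting 3-form and the bracket change under the scaling $\kappa_t$. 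The natural way to extract $\omega$ is to differentiate in $t$: the Courant bracket $\Cour{\epsilon,\cdot}$ produces, through the $\ca{L}_X$ and $\iota_X\d$ and $\iota_X\eta$ terms in its formula, the 2-form $\d\alpha+\iota_X\eta$, and the homotopy/rescaling argument $\kappa_\tau^*$ with the factor $\f{1}{\tau}$ (the same normalization that appears in Lemma \ref{lem:linearization}) integrates this infinitesimal $B$-field into the closed-form expression \eqref{eq:w}. Concretely, one checks that $\tf{d}{dt}\big(\kappa_t^!\psi^!E\big)$ corresponds to the $B$-field generated by $\f{1}{t}\kappa_t^*\psi^*(\d\alpha+\iota_X\eta)$, so that integrating from $0$ to $1$ yields $E|_U=\T\psi\big((p^!i^!E)^\omega\big)$, equivalently $(p^!i^!E)^\omega=\T\psi^{-1}(E|_U)$.

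I expect the main obstacle to be this $t$-derivative identity: showing cleanly that the $t$-dependent Dirac structures $\kappa_t^!\psi^!E$ vary by a $B$-field transform whose generating 2-form is exactly $\f{1}{t}\kappa_t^*\psi^*(\d\alpha+\iota_X\eta)$, and that this is consistent with the twist shifting from $\psi^*\eta$ to $p^*i^*\eta$ under the rule $E^\omega$ is a Dirac structure relative to $\eta+\d\omega$. One must verify that $\d\omega=\psi^*\eta-p^*i^*\eta$ (so the twists match up on both sides), which should follow from $\tf{d}{dt}\kappa_t^*\psi^*\eta=\d\big(\f{1}{t}\kappa_t^*\psi^*\iota_X\eta\big)$ by Cartan's formula together with the scaling identity $t\tf{d}{dt}\kappa_t^*=\kappa_t^*\ca{L}_{\E}$ and $\E\sim_\psi X$. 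The bookkeeping of which bracket term contributes $\d\alpha$ versus $\iota_X\eta$ is delicate but routine once the rescaling framework is in place.

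Finally, the $G$-equivariant statement requires no new ideas: by Lemma \ref{lem:epsilon} (in its $G$-invariant form) one may take $\epsilon$ to be $G$-invariant when the action is proper and preserves $\eta$ and $E$; then $X$, $\psi$, and all the maps $\kappa_\tau^*\psi^*$ are $G$-equivariant, so the integrand in \eqref{eq:w} is $G$-invariant, whence $\omega$ is $G$-invariant and $\T\psi$ is $G$-equivariant.
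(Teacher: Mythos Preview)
Your proposal is correct and follows essentially the same route as the paper: study the $t$-family $\kappa_t^!\psi^!E$ of Dirac structures on $\nu_N$, show it varies by the $B$-field $\tfrac{1}{t}\kappa_t^*\psi^*(\d\alpha+\iota_X\eta)$, and integrate. The paper's proof is slightly more streamlined in that it never invokes Theorem~\ref{th:liealgebroids} at all---it works directly with the Courant-automorphism flow formula (Section~\ref{subsec:couaut}) to get $\lambda_t^!E=E^{\gamma_s}$ with $\psi^*\gamma_s=-\omega_t$, hence $(\kappa_t^!\psi^!E)^{\omega_t}=\psi^!E$ is constant in $t$; your $t$-derivative identity is exactly the infinitesimal version of this.
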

We will see that the 2-form $\omega$ is well-defined: the family of 2-forms
 $\f{1}{t}\kappa_t^*\psi^*(\d\alpha+\iota_X\eta) $ extends smoothly to $t=0$.
Furthermore,  $\d\omega=\psi^*\eta-p^*i^*\eta$.
The proof of Theorem \ref{th:dirac} will be given in Section \ref{subsec:proofdirac}; its functorial properties
will be discussed in Section \ref{subsec:diracfunc}.

\begin{remark} \label{rem:remark}
For later reference, we remark that Theorem \ref{th:dirac}, and its proof,  extend to \emph{complex Dirac structures} $E\subset \T_\C M$ inside the complexified Courant algebroid,
provided $\epsilon\in \Gamma(E)$ can be chosen in such a way that its vector field part is real, $X=\ol{X}$.
\end{remark}

\begin{remark}
Since $E$ is a Dirac structure, the Courant bracket restricts to a Lie-algebroid bracket
of $E$. Hence, the section $\epsilon$ defines an isomorphism of Lie algebroids
$p^!i^!E\to E|_U$, using the approach in Section \ref{sec:lialg}. The Theorem above gives a stronger  statement, since it treats $E$ not merely as a Lie algebroid, but as a Dirac structure embedded as a subbundle of $\T M$. Forgetting about this embedding, and identifying
$(p^!i^!E)^\omega$ with  $p^!i^!E$ \emph{as Lie algebroids}, one may verify that the isomorphism from Theorem
\ref{th:dirac} reduces to that of Theorem \ref{th:liealgebroids}.
\end{remark}

Theorem \ref{th:dirac} specializes to local splitting theorems for Dirac manifolds near given points $m\in M$.
\begin{corollary}\label{cor:dirac}
Let $E\subset \T M$ be a Dirac structure relative to the closed 3-form $\eta\in \Om^3(M)$, and $m\in M$. Let $N\subset M$ be a submanifold containing $m$, such that
$T_m N$ is a complement to $\P=\a_m(E_m)$ in $T_mM$. Then the Dirac structure $E$ is isomorphic, near $m$, to a Dirac structure of the form
\[ (i^!E\times T\P)^\omega\subset \T(N\times \P).\]
Here $\omega\in \Om^2(N\times \P)$ is a 2-form such that the (local) diffeomorphism of the base manifolds
$M$ and $N\times\P$ takes
$\eta$ to $p^*i^*\eta+\d\omega$. Given an action of a compact Lie group $G$ by Dirac automorphisms, fixing $m$ and preserving the submanifold $N$, one obtains a $G$-invariant $\omega$ and a $G$-equivariant isomorphism.
\end{corollary}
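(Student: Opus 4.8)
The plan is to derive Corollary \ref{cor:dirac} as the special case of Theorem \ref{th:dirac} in which the normal bundle $\nu_N$ is trivialized and $N$ is replaced by a sufficiently small neighborhood of $m$. First I would observe that the transversality hypothesis $\a_m(E_m)\oplus T_mN=T_mM$ is precisely the condition that $N$ be a transversal for $E$ at the point $m$; since transversality is an open condition, after shrinking $N$ we may assume $N$ is a transversal in the sense required by the theorem. By construction $\P=\a_m(E_m)$ has dimension equal to the rank of the normal bundle $\nu_N$ at $m$, namely $\dim M-\dim N=\dim\P$; shrinking $N$ further if necessary, the normal bundle $\nu_N\to N$ is trivializable, and I fix an identification $\nu_N\cong N\times \P$ as a vector bundle.

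Next I would invoke Lemma \ref{lem:epsilon} (in the form used in Theorem \ref{th:dirac}) to choose $\epsilon=X+\alpha\in\Gamma(E)$ with $\epsilon|_N=0$ and $X=\a(\epsilon)$ Euler-like along $N$; Theorem \ref{th:dirac} then produces a tubular neighborhood embedding $\psi\colon \nu_N\to U\subset M$ and an isomorphism of Dirac structures $\T\psi\colon (p^!i^!E)^\omega\xrightarrow{\cong} E|_U$, with $\omega$ given by \eqref{eq:w}. Using the trivialization $\nu_N\cong N\times \P$, the linear model $p^!i^!E$ decomposes as the direct product $i^!E\times T\P$ of Dirac structures inside $\T(N\times\P)=\T N\oplus \T\P$, exactly as recorded in the discussion preceding the corollary for Lie algebroids and anchored vector bundles; the $B$-field transform by $\omega$ then gives the asserted local model $(i^!E\times T\P)^\omega\subset\T(N\times\P)$. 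The statement that the base diffeomorphism (namely $\psi$, composed with the trivialization) carries $\eta$ to $p^*i^*\eta+\d\omega$ is immediate from the relation $\d\omega=\psi^*\eta-p^*i^*\eta$ noted right after the theorem, since a $B$-field transform by $\omega$ shifts the background $3$-form by $\d\omega$.

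For the equivariant refinement I would assume a compact Lie group $G$ acts by Dirac automorphisms fixing $m$ and preserving $N$. Compactness makes the action automatically proper, so Lemma \ref{lem:epsilon} yields a $G$-invariant section $\epsilon$, and the equivariant part of Theorem \ref{th:dirac} gives a $G$-invariant $\omega$ and a $G$-equivariant isomorphism $\T\psi$. The one point requiring a little care is the trivialization $\nu_N\cong N\times\P$: to keep the splitting $i^!E\times T\P$ manifestly $G$-equivariant one wants the identification to be $G$-equivariant near $m$. Since $G$ fixes $m$ and acts linearly on the fiber $\nu_{N,m}\cong\P=\a_m(E_m)$ through the isotropy representation, I would choose the trivialization to be $G$-equivariant over a small invariant neighborhood of $m$ by using the linear isotropy action on the fibers; compactness of $G$ guarantees such an invariant trivialization exists after shrinking. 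With this choice the product decomposition and the resulting isomorphism are $G$-equivariant.

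The main obstacle, such as it is, is the bookkeeping around the trivialization of $\nu_N$ and its compatibility with the product decomposition $p^!i^!E=i^!E\times T\P$: one must check that under $\nu_N\cong N\times\P$ the Euler vector field and the pulled-back Dirac structure genuinely split as a direct product of Dirac structures, and (in the equivariant case) that this can be done $G$-equivariantly. All the analytic content, however, is already carried by Theorem \ref{th:dirac} and the identity $\d\omega=\psi^*\eta-p^*i^*\eta$; the corollary is essentially a matter of specializing to trivial normal bundle and unwinding the definitions.
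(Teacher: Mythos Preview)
Your proposal is correct and follows exactly the approach the paper intends: the corollary is stated immediately after Theorem \ref{th:dirac} as a specialization to the case where the normal bundle is trivialized (the paper does not spell out a separate proof), and you have filled in precisely the required details---shrinking $N$ to make it a transversal with trivial normal bundle, identifying $p^!i^!E$ with $i^!E\times T\P$ under the trivialization, and reading off the relation $\psi^*\eta=p^*i^*\eta+\d\omega$. Your attention to the $G$-equivariant trivialization of $\nu_N$ near the fixed point $m$ is a detail the paper leaves implicit, and your treatment of it is fine.
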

For the case $\eta=0,\ G=\{1\}$, this result is due to Blohmann \cite[Theorem 3.2]{blo:rem}.

 \subsection{Courant automorphisms}
 \label{subsec:couaut}
For the $\eta$-twisted Courant algebroid $\T M$,
we denote by $\on{Aut}_{CA}(\T M)$ the group of \emph{Courant automorphisms}, consisting of vector bundle automorphisms preserving the anchor, the symmetric pairing and the bracket.
The Lie algebra of infinitesimal Courant automorphisms $\wt{X}$ is denoted  by $\aut_{CA}(\T M)$; the corresponding operators $D$ on sections of $\T M$ preserve the anchor, as well as the bracket and pairing in the sense that
\[ \ca{L}_X \l \sigma_1,\ \sigma_2\r=\l D(\sigma_1),\ \sigma_2\r +\l \sigma_1,\ D(\sigma_2)\r,\]
\[  D\Cour{\sigma_1,\ \sigma_2}=\Cour{D(\sigma_1),\,\sigma_2}+\Cour{\sigma_1,\,D(\sigma_2)},
\]
for all $\sigma_i=X_i+\alpha_i\in \Gamma(\T M)$.
Any section $\sigma=X+\alpha\in \Gamma(\T M)$ defines an infinitesimal Courant automorphism
with $D=\Cour{\sigma,\cdot}$.

The group $\on{Aut}_{CA}(\T M)$ and its Lie algebra $\aut_{CA}(\T M)$ have the following explicit description. The B-field transforms $\ca{R}_\omega$ and the bundle maps $\T\Phi$ defined by diffeomorphisms $\Phi$ of $M$ combine into an injective group homomorphism
\begin{equation}\label{eq:semi}
 \Omega^2(M)\rtimes \on{Diff}(M)\to \Aut(\T M),\
(\omega,\Phi)\mapsto \ca{R}_{-\omega}\circ \T \Phi.\end{equation}
The image consists of vector bundle automorphisms preserving the anchor and the pairing. Similarly, there is a Lie algebra morphism  $\Omega^2(M)\rtimes \mf{X}(M)\to \aut(\T M)$ with the action $(\vartheta,X).(Y+\beta)=[X,Y]+\ca{L}_X\beta-\iota_Y\vartheta$.
The following is proved in  \cite[Proposition 2.5]{gua:ge1} and \cite[Section 2]{hu:ham}, using slightly different sign conventions.
\begin{proposition}
The  map \eqref{eq:semi} restricts to an isomorphism from the group of all $(\omega,\Phi)$ such that
\[ (\Phi^{-1})^*\eta-\d\omega=\eta\]
onto $\Aut_{CA}(\T M)$. Similarly, $\aut_{CA}(\T M)$ is isomorphic to the Lie subalgebra of $ \Omega^2(M)\rtimes \mf{X}(M)$ consisting of all $(\vartheta,X)$ such that $\ca{L}_X\eta+\d\vartheta=0$.
\end{proposition}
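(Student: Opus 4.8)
The plan is to reduce the statement to two elementary transport identities describing how $\T\Phi$ and $\ca{R}_\omega$ move the twisting $3$-form of the Courant bracket, and then to read off both conditions. I write $\Cour{\cdot,\cdot}_\eta$ for the $\eta$-twisted bracket. The key preliminary observation is that the twist is recovered from the bracket: evaluating on vector fields (with vanishing cotangent parts) gives $\Cour{X_1,X_2}_\eta=[X_1,X_2]+\iota_{X_1}\iota_{X_2}\eta$, whose cotangent component determines $\eta$, so two twists inducing the same bracket on all sections must coincide. Moreover, a Courant automorphism preserves the anchor and the pairing by definition, so by the description of the image of \eqref{eq:semi} stated above, every element of $\Aut_{CA}(\T M)$ equals $\ca{R}_{-\omega}\circ\T\Phi$ for a \emph{unique} pair $(\omega,\Phi)$. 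The whole content is thus to determine which such maps preserve $\Cour{\cdot,\cdot}_\eta$.

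The two transport identities are as follows. For a diffeomorphism $\Phi$, the lift $\T\Phi$ acts on sections as the pushforward $X+\alpha\mapsto\Phi_*X+(\Phi^{-1})^*\alpha$, and since $[\cdot,\cdot]$, $\ca{L}$, $\iota$ and $\d$ are all natural under pushforward, a term-by-term comparison gives
\[ \T\Phi\,\Cour{\sigma_1,\sigma_2}_\eta=\Cour{\T\Phi\,\sigma_1,\T\Phi\,\sigma_2}_{(\Phi^{-1})^*\eta}. \]
For the $B$-field transform, a short Cartan-calculus computation -- the one underlying the already-quoted fact that $\ca{R}_\omega$ carries $\eta$-Dirac structures to $(\eta+\d\omega)$-Dirac structures -- gives
\[ \ca{R}_\omega\,\Cour{\sigma_1,\sigma_2}_\eta=\Cour{\ca{R}_\omega\,\sigma_1,\ca{R}_\omega\,\sigma_2}_{\eta+\d\omega}. \]
Since $\ca{R}_{-\omega}\circ\T\Phi$ applies $\T\Phi$ first, composing these (with $\omega$ replaced by $-\omega$ in the second) yields
\[ (\ca{R}_{-\omega}\circ\T\Phi)\,\Cour{\sigma_1,\sigma_2}_\eta=\Cour{(\ca{R}_{-\omega}\circ\T\Phi)\sigma_1,(\ca{R}_{-\omega}\circ\T\Phi)\sigma_2}_{(\Phi^{-1})^*\eta-\d\omega}. \]

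The finite statement is now immediate: because the twist is determined by the bracket, $\ca{R}_{-\omega}\circ\T\Phi$ lies in $\Aut_{CA}(\T M)$ if and only if $(\Phi^{-1})^*\eta-\d\omega=\eta$. As this condition set is exactly the preimage of the subgroup $\Aut_{CA}(\T M)$ under the injective homomorphism \eqref{eq:semi}, it is itself a subgroup and maps isomorphically onto $\Aut_{CA}(\T M)$. For the Lie-algebra statement I would differentiate this condition along a one-parameter subgroup through the identity with generator $(\vartheta,X)$ -- equivalently, impose directly that the operator $(Y+\beta)\mapsto[X,Y]+\ca{L}_X\beta-\iota_Y\vartheta$ is a derivation of $\Cour{\cdot,\cdot}_\eta$, using the infinitesimal forms of the two identities above -- obtaining $\ca{L}_X\eta+\d\vartheta=0$, and hence the asserted Lie subalgebra of $\Om^2(M)\rtimes\mf{X}(M)$.

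The one genuinely computational step is the $B$-field identity: one must verify that the extra term $\iota_{X_1}\iota_{X_2}\d\omega$ produced by $\ca{R}_\omega$ is precisely what shifts the twist from $\eta$ to $\eta+\d\omega$, all remaining contributions cancelling via $\ca{L}=\d\iota+\iota\d$ and $\ca{L}_{X_1}\iota_{X_2}-\iota_{X_2}\ca{L}_{X_1}=\iota_{[X_1,X_2]}$. The diffeomorphism identity is pure naturality, and the passage to the infinitesimal condition is routine though sign-sensitive; I anticipate no conceptual obstacle.
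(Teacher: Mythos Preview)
Your argument is correct and is the standard one. Note, however, that the paper does not give its own proof of this proposition: it is stated with attribution to Gualtieri \cite{gua:ge1} and Hu \cite{hu:ham}, so there is nothing in the paper to compare your approach against. The two transport identities you isolate---that $\T\Phi$ intertwines $\Cour{\cdot,\cdot}_\eta$ with $\Cour{\cdot,\cdot}_{(\Phi^{-1})^*\eta}$ by naturality, and that $\ca{R}_\omega$ intertwines $\Cour{\cdot,\cdot}_\eta$ with $\Cour{\cdot,\cdot}_{\eta+\d\omega}$ via the Cartan calculus---together with the observation that the twist is recoverable from the bracket on pure vector fields, are exactly what underlies the proofs in those references.
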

The formula for the Courant bracket shows that the infinitesimal Courant automorphism $\Cour{X+\alpha,\cdot}$ corresponds to $(\d\alpha+\iota_X\eta,\,X)\in \Om^2(M)\rtimes \mf{X}(M)$.
We will also need the following result from \cite[Proposition 2.6]{gua:ge1} and \cite[Section 2]{hu:ham}.
\begin{proposition}
Let $(\vartheta,X)\in \aut_{CA}(\T M)$, where $X$ is complete with flow $\Phi_s$. Then
the 1-parameter group of automorphisms defined by $(\vartheta,X)$ is
$(\gamma_s,\Phi_s)$, where
\[ \gamma_s=\int_0^s (\Phi_u^{-1})^*\vartheta\ \d u.\]
\end{proposition}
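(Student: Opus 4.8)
The plan is to use the group-level description of Courant automorphisms furnished by the preceding proposition, which realizes $\Aut_{CA}(\T M)$ as a subgroup of the semidirect product $\Om^2(M)\rtimes\on{Diff}(M)$ through the homomorphism \eqref{eq:semi}. Under this identification, a one-parameter group of Courant automorphisms generated by $(\vartheta,X)$ is simply a one-parameter subgroup $s\mapsto(\gamma_s,\Phi_s)$ whose derivative at $s=0$ equals $(\vartheta,X)$, and the task is to solve for $(\gamma_s,\Phi_s)$ explicitly. Completeness of $X$ guarantees that $\Phi_s$ exists for all $s$, while the integral defining $\gamma_s$ converges on every finite interval, so the one-parameter group will automatically be globally defined.

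First I would record the multiplication in the semidirect product. Using the elementary identity $\T\Phi\circ\ca{R}_\omega=\ca{R}_{(\Phi^{-1})^*\omega}\circ\T\Phi$, which follows at once from the definitions of $\T\Phi$ and of the $B$-field transform $\ca{R}_\omega$, the requirement that \eqref{eq:semi} be a group homomorphism forces the product
\[ (\omega_1,\Phi_1)\cdot(\omega_2,\Phi_2)=\bigl(\omega_1+(\Phi_1^{-1})^*\omega_2,\ \Phi_1\Phi_2\bigr). \]
Consequently the $\on{Diff}(M)$-component of any one-parameter subgroup must be the flow $\Phi_s$ of $X$, while the subgroup condition $(\gamma_{s+t},\Phi_{s+t})=(\gamma_s,\Phi_s)\cdot(\gamma_t,\Phi_t)$ reduces to the cocycle identity $\gamma_{s+t}=\gamma_s+(\Phi_s^{-1})^*\gamma_t$. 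Differentiating this in $t$ at $t=0$ and using $\tf{d}{dt}\big|_{t=0}\gamma_t=\vartheta$ yields the transport equation $\tf{d}{ds}\gamma_s=(\Phi_s^{-1})^*\vartheta$ with $\gamma_0=0$, whose unique solution is $\gamma_s=\int_0^s(\Phi_u^{-1})^*\vartheta\,\d u$. A short substitution $u=s+r$ confirms conversely that this $\gamma_s$ satisfies the cocycle identity, so $(\gamma_s,\Phi_s)$ is a one-parameter subgroup of $\Om^2(M)\rtimes\on{Diff}(M)$ with generator $(\vartheta,X)$.

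The step carrying the real content is to check that each $(\gamma_s,\Phi_s)$ actually lies in $\Aut_{CA}(\T M)$, that is, satisfies the finite constraint $(\Phi_s^{-1})^*\eta-\d\gamma_s=\eta$ of the preceding proposition. Since exterior differentiation commutes with pullback and with the integral, I would compute $\d\gamma_s=\int_0^s(\Phi_u^{-1})^*\d\vartheta\,\d u$. The defining relation of $\aut_{CA}(\T M)$ between $\d\vartheta$ and $\ca{L}_X\eta$, combined with the identity $\tf{d}{du}(\Phi_u^{-1})^*\eta=(\Phi_u^{-1})^*\ca{L}_X\eta$ (valid because, in the flow conventions of Section~\ref{subsec:notation}, $\Phi_u^{-1}$ is the flow of $X$), turns the integrand into the total derivative $\tf{d}{du}(\Phi_u^{-1})^*\eta$. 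The integral therefore telescopes to $(\Phi_s^{-1})^*\eta-\eta$, which is exactly $\d\gamma_s$; this verifies the constraint and shows $(\gamma_s,\Phi_s)\in\Aut_{CA}(\T M)$ for every $s$.

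The computation in the second paragraph shows that any one-parameter subgroup of $\Om^2(M)\rtimes\on{Diff}(M)$ with generator $(\vartheta,X)$ is necessarily $(\gamma_s,\Phi_s)$ with $\gamma_s$ as in the statement, while the third paragraph shows that this subgroup lies in $\Aut_{CA}(\T M)$; together these identify the one-parameter group of Courant automorphisms defined by $(\vartheta,X)$ with $(\gamma_s,\Phi_s)$, proving the formula. I expect the third paragraph to be the main obstacle: one must see that the infinitesimal relation between $\d\vartheta$ and $\ca{L}_X\eta$ integrates to the finite relation $(\Phi_s^{-1})^*\eta-\d\gamma_s=\eta$, so that the transport solution $\gamma_s$ never leaves the space of $2$-forms permitted by $\Aut_{CA}(\T M)$. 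The only genuinely delicate point is the bookkeeping of signs, governed by the flow convention of Section~\ref{subsec:notation}.
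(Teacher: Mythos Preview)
Your argument is correct, and there is nothing in the paper to compare it against: the paper does not prove this proposition but simply quotes it from \cite[Proposition 2.6]{gua:ge1} and \cite[Section 2]{hu:ham}. Your strategy---identify the semidirect-product multiplication, solve the resulting cocycle ODE for $\gamma_s$, and then verify that the solution satisfies the finite constraint $(\Phi_s^{-1})^*\eta-\d\gamma_s=\eta$ by integrating the infinitesimal relation $\ca{L}_X\eta+\d\vartheta=0$---is exactly the standard one, and each step you outline goes through.

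Your closing remark about signs is well placed and is the only thing to watch. With the flow convention of Section~\ref{subsec:notation} (so that $\Phi_u^{-1}$ is the ``standard'' flow of $X$, giving $\tf{d}{du}(\Phi_u^{-1})^*\eta=(\Phi_u^{-1})^*\ca{L}_X\eta$), the relation $\d\vartheta=-\ca{L}_X\eta$ makes the integrand $-\tf{d}{du}(\Phi_u^{-1})^*\eta$ rather than $+\tf{d}{du}(\Phi_u^{-1})^*\eta$; the telescoping then gives $\d\gamma_s=\eta-(\Phi_s^{-1})^*\eta$. Matching this against the finite constraint requires care about how the group $\Om^2(M)\rtimes\on{Diff}(M)$ is identified with its Lie algebra (equivalently, whether ``the one-parameter group defined by $(\vartheta,X)$'' has tangent $(\vartheta,X)$ or $(-\vartheta,-X)$ at $s=0$ under the paper's flow convention). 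This is purely a matter of consistently tracking the sign in the exponential map, not a gap in the argument, and the references the paper cites fix the conventions so that the stated formula holds.
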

If $E\subset \T M$ is a Dirac structure, and $X+\alpha\in \Gamma(E)$, then
$(\d\alpha+\iota_X\eta,\,X)\in\aut(\T M)$ preserves $E$, hence so does the resulting flow
$(\gamma_s,\Phi_s)$. That is, $\ca{R}_{-\gamma_s}((\T\Phi_s)(E))=E$, or
\begin{equation}\label{eq:gaugetr}
(\T\Phi_s)(E)=E^{\gamma_s}.
\end{equation}

\subsection{Proof of Theorem \ref{th:dirac}}\label{subsec:proofdirac}
\begin{proof}
Given $\eps\in\Gamma(E)$ as in Theorem \ref{th:dirac}, write $\eps=X+\alpha$ where
$X\in\mf{X}(M)$ is the vector field part and $\alpha\in \Om^1(M)$ the 1-form part.
By construction, $X$ is Euler-like along $N$, hence its flow $\Phi_s$  defines a tubular neighborhood embedding $\psi$ such that $\lambda_t\circ \psi=\psi\circ \kappa_t$,
for $t>0$, where we write $\lambda_t=\Phi_{{-\log(t)}}$.
Let
\[ \omega_t=\int_t^1 \f{1}{\tau} \kappa_\tau^*\psi^* (\d\alpha+\iota_X\eta)\,\d\tau,\]
so that $\omega=\omega_0$. We claim that the family of Dirac structures
\begin{equation}\label{eq:fam2}
 (\kappa_t^! \psi^! E)^{\omega_t}\subset \T \nu_N
 \end{equation}
is independent of $t\ge 0$.
This proves the theorem, because \eqref{eq:fam2} equals
 $\psi^! E=(\T \psi^{-1}) (E|_U)$ for $t=1$, and $(\kappa_0^!\psi^! E)^{\omega}
 =(p^!i^! E)^{\omega}$ for $t=0$.

By continuity, it suffices to prove the $t$-independence
 of \eqref{eq:fam2} for $t>0$. By \eqref{eq:gaugetr},  we have
\[ \lambda_t^! E=(\T \lambda_t^{-1})(E)=E^{\gamma_s}\]
for $s=-\log(t)$, where $\gamma_s\in \Om^2(U)$ are the 2-forms
\[ \gamma_s=\int_0^{-\log(t)}(\Phi_{u}^{-1})^*(\d\alpha+\iota_X\eta)\ \d u
=-\int_t^1 \f{1}{\tau} \lambda_\tau^*(\d\alpha+\iota_X\eta)\,\d\tau
.\]
With $\psi^*\gamma_s=-\omega_t$, it follows that
\[ \kappa_t^!\psi^!E=\psi^!\lambda_t^!E=
\psi^! E^{\gamma_s}
=(\psi^!E)^{-\om_t},\]
hence $(\kappa_t^!\psi^!E)^{\om_t}=\psi^! E$ is independent of $t$, as desired. \qedhere
\end{proof}

\subsection{Functorial properties of the normal form}\label{subsec:diracfunc}
Let $\Phi\colon M'\to M$ be a smooth map, and $E\subset \T M$ and $E'\subset \T M'$ Dirac structures relative to closed 3-forms
$\eta$ and $\eta'$. Then $\Phi$ is called a \emph{Dirac morphism} if $\Phi^*\eta=\eta'$, and for all $m=\Phi(m')$ and $v+\mu \in E_m$, there exists a {\em unique} element $v'+\mu'\in E'_{m'}$ such that $v=T\Phi(v')$ and $\mu'=\Phi^*\mu$. We denote such a Dirac morphism by
\[ \T\Phi\colon (\T M',E')\dasharrow (\T M,E).\]
It determines a
comorphism of Lie algebroids $E'\dasharrow E$; the corresponding map $\Phi^*E\to E'$ takes $v+\mu\in E$ to the unique
$v'+\mu'\in E'$ to which it is related.

Given a transversal $N\subset M$ for the Dirac structure $E$, and suppose that the map $\Phi$ is transverse to $N$. Then $N'=\Phi^{-1}(N)$ is a transversal for $E'$,
and given a section $\epsilon \in \Gamma(E)$ such that $X=\a(\epsilon)$ is Euler-like,
then its pull-back $\epsilon'\in\Gamma(E')$ defines an Euler-like vector field $X'=\a(\epsilon')$. Let $\omega$ be as in Theorem \ref{th:dirac}, and $\omega'$ its pull-back under $\nu(\Phi)$. (Equivalently, $\omega'$ is given by Equation \eqref{eq:omega}, using $\alpha'=\Phi^*\alpha$.)
We obtain a commutative diagram of Dirac morphisms,
\begin{equation}\label{diag:funct5} \xymatrixcolsep{50pt}
\xymatrix{ (\T\nu_{N'},\ ({p'}^!{i'}^!E')^{\omega'}) \ar@{-->}[r]_{\T\psi'}\ar@{-->}[d]_{\T \nu({\Phi})}    & (\T M',\ E')\ar@{-->}[d]^{{\T\Phi}}\\{(\T\nu_N,(p^!i^!E)^\omega)}\ar@{-->}[r]_{\T\psi} & (\T M,E).}\end{equation}

\subsection{Uniqueness properties for transversals}\label{subsec:uni3}
Let $E\subset \T M$ be a Dirac structure relative to a closed 3-form $\eta\in \Omega^3(M)$, and $i_q\colon N_q\to M$ a family of transversals labeled by points $q\in Q$. As in Section \ref{subsec:uni1},  $N_q$ are the fibers of a submersion $\psi\colon N\to Q$, and $i_q=\phi\circ j_q$,  where $j_q\colon N_q\to N$ is the inclusion.  Since $\phi$ is transverse to $\a$, it defines an Dirac structure
$F=\phi^!E\subset \T N$, and we have $i_q^!E=j_q^!F$. (Our notation here differs slightly from Section \ref{subsec:uni1}.)

Given any vector field $Z$ on $Q$, we can find a section $Y+\beta\in \Gamma(F)$ such that $Y\sim_\psi Z$. The section defines $(\d\beta+\iota_Y\phi^*\eta,\,Y)\in \aut_{CA}(\T N)$ preserving $F$. To simplify the discussion, let us assume that $Z$ and $Y$ are complete, with flows $\Phi_s^Z,\ \Phi_s^Y$ (in the general case, one has to work with local flows). Then the infinitesimal
automorphism integrates to a 1-parameter group of automorphisms
$(\gamma_s,\Phi_s^Y)$ where
\begin{equation}\label{eq:gammas}
 \gamma_s=\int_0^s (\Phi_{-u}^Y)^*(\d\beta+\iota_Y\phi^*\eta)\ \d u\in \Om^2(N).\end{equation}
As explained above, we have $F=(\Phi_s^Y)^! (F^{\gamma_s})$. Applying
$j_q^!$ to both sides and using that $\Phi_s^Y\circ j_q=j_{\Phi_s^Z(q)}\circ (\Phi_s^Y|_{N_q})$, we obtain
\[
 i_q^!E=j_q^!F=(\Phi_s^Y|_{N_q})^!j_{\Phi_s^Z(q)}^!(F^{\gamma_s})
=(\Phi_s^Y|_{N_q})^!\,(i_{\Phi_s(q)}^! E )^{\vartheta_s},\]
where
\begin{equation}\label{eq:iqe}\vartheta_s=(j_{\Phi_s^Z(q)})^*\gamma_s\in \Omega^2(N_{\Phi_s^Z(q)}).\end{equation}
That is, $\T (\Phi_s^Y|_{N_q})$ gives an isomorphism of Dirac structures
$i_q^!E\to (i_{\Phi_s(q)}^! E )^{\vartheta_s}$.

As a special case, given a leaf $S\subset M$ of the Dirac structure, and two transversals $i_0\colon N_0\to  M$ and $i_1\colon N_1\subset M$, intersecting $S$ in points $m_0,m_1$, we can extend to a family of transversals
$i_s\colon N_s\to M$ with $N_s\cap S=\{m_s\}$, and there is a family of isomorphisms
\begin{equation}\label{eq:famisom} i_0^!E\to (i_s^!E)^{\vartheta_s}.\end{equation}
(Cf. Corollary \ref{cor:uniqlocal}.)

\begin{remark}
Suppose $\eta=0$, and let  $i\colon N\to M$ be a transversal through a given point $m\in M$,
with $T_mM=T_mN\oplus \a(E_m)$. Then the Dirac structure $i^!E$ on $N$ is in fact
a \emph{Poisson structure} near $m$. A uniqueness theorem for these transverse Poisson structure was obtained by Dufour-Wade \cite[Theorem 4.5]{duf:lo}. It can be recovered from
our result, using the argument in Remark \ref{rem:w} \eqref{it:wc} below. (We are grateful to the referee for this remark.)
\end{remark}

\section{Poisson manifolds}\label{sec:poisson}
Let $(M,\pi)$ be a Poisson manifold. We denote by $\pi^\sharp\colon  T^*M\to TM$ the bundle map defined by $\pi$, and by
$E=\on{Gr}(\pi)\subset \T M$ the Dirac structure given as its graph. A submanifold $i\colon N\hra M$ is a
\emph{transversal} for $(M,\pi)$ if it is transverse to the map $\pi^\sharp$. Equivalently, the restriction of  $\pi^\sharp$ to $\on{ann}(TN)$ is injective. The Poisson bivector restricts to a
skew-symmetric bilinear form on the conormal bundle $\on{ann}(TN)\subset T^*M|_N$. The transversal $N$ has \emph{constant corank}
if this restriction has constant rank; these are special cases of the \emph{pre-Poisson submanifolds} studied in the work of Cattaneo-Zambon \cite{cat:coi} and Calvo-Falceto \cite{cal:poi}. If the bilinear form on $\on{ann}(TN)$ is non-degenerate, then $N$ is called a \emph{cosymplectic transversal}; these are discussed in work of Weinstein \cite{wei:loc}, Xu \cite{xu:dir}, Cattaneo-Zambon \cite{cat:coi},
and Frejlich-M\u{a}rcu\c{t} \cite{fre:nor} (under the name of \emph{Poisson transversal}).

For a cosymplectic transversal, the subbundle $\nu_N^*\cong \on{ann}(TN)\subset T^*M|_N$ is a
symplectic vector bundle, with the  fiberwise symplectic structure inverse to the
restriction of $\pi|_N$. The range of
$\pi^\sharp\colon \on{ann}(TN)\to TM|_N$ is a complement to $TN$, identifying $\pi^\sharp(\on{ann}(TN))\cong \nu_N$.
The non-degeneracy condition is equivalent to  the  direct sum decomposition
\begin{equation}\label{eq:split2a}
TM|_N=\pi^\sharp(\on{ann}(TN))\oplus TN,\end{equation}
or dually
\begin{equation}\label{eq:split2}
T^*M|_N=\on{ann}(TN)\oplus T^*N.
\end{equation}
Weinstein \cite[Proposition 1.4]{wei:loc} showed that any cosymplectic transversal $N$ inherits a Poisson structure $\pi_N$.  In fact, we have:
\begin{lemma}
A transversal $i\colon N\hra M$ for a Poisson manifold $(M,\pi)$ is cosymplectic if and only if  the Dirac structure $i^!E\subset \T N$ has trivial  intersection with $TN$.
In this case,  $i^!E=\on{Gr}(\pi_N)$.
\end{lemma}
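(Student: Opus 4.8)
The plan is to reduce the whole statement to pointwise linear algebra over each $x\in N$, working with the conormal space $\ann(T_xN)\subset T_x^*M$ and the skew form $B(\mu,\nu)=\pi(\mu,\nu)=\langle\nu,\pi^\sharp(\mu)\rangle$ that $\pi$ induces on it (the restriction of the Poisson bivector to the conormal bundle). First I would unwind the definition of the pullback Dirac structure: by definition, $(i^!E)_x$ consists of those $v'+\mu'\in T_xN\oplus T_x^*N$ for which there is some $\mu\in T_x^*M$ with $v'=\pi^\sharp(\mu)\in T_xN$ and $\mu'=i^*\mu$. Intersecting with $TN$ forces $\mu'=0$, i.e.\ $\mu\in\ann(T_xN)$, so that
\[
(i^!E\cap TN)_x=\{\pi^\sharp(\mu)\mid \mu\in\ann(T_xN),\ \pi^\sharp(\mu)\in T_xN\}=\pi^\sharp\big(\ann(T_xN)\cap(\pi^\sharp)^{-1}(T_xN)\big).
\]

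The key observation is that $\ann(T_xN)\cap(\pi^\sharp)^{-1}(T_xN)$ is exactly the radical of $B$. Indeed, for $\mu\in\ann(T_xN)$ one has $B(\mu,\nu)=\langle\nu,\pi^\sharp(\mu)\rangle=0$ for all $\nu\in\ann(T_xN)$ if and only if $\pi^\sharp(\mu)$ is annihilated by the entire conormal space, i.e.\ if and only if $\pi^\sharp(\mu)\in\ann(\ann(T_xN))=T_xN$. Hence $(i^!E\cap TN)_x=\pi^\sharp(\on{rad}(B))$. Since $N$ is a transversal, $\pi^\sharp$ is injective on $\ann(T_xN)$, and therefore injective on the subspace $\on{rad}(B)$; consequently $(i^!E\cap TN)_x=0$ if and only if $\on{rad}(B)=0$, that is, if and only if $B$ is nondegenerate, which is precisely the cosymplectic condition. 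This gives the claimed equivalence, the injectivity being the one place where the transversal hypothesis is used.

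For the second assertion I assume $N$ cosymplectic, so the first part yields $i^!E\cap TN=0$. As $\eta=0$ in the Poisson case and $i^!E\subset\T N$ is a Dirac structure (being the pullback of $E$ under a map transverse to the anchor), the fact recalled earlier---that an untwisted Dirac structure with trivial intersection with $TN$ is the graph of a Poisson bivector---produces a Poisson structure $\pi_N$ on $N$ with $i^!E=\on{Gr}(\pi_N)$. To match $\pi_N$ with Weinstein's transverse structure I would use the splitting \eqref{eq:split2}, which supplies a lift $j\colon T^*N\to T^*M|_N$ with image the complement $\ann(\pi^\sharp(\ann(TN)))$ to $\ann(TN)$; tracing through the description of $i^!E$ shows $\pi_N^\sharp=\pi^\sharp\circ j$, with values in $TN$ precisely because $j(\mu')$ annihilates $\pi^\sharp(\ann(TN))$, and this is exactly Weinstein's Dirac-bracket formula. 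The main obstacle I anticipate is not any single hard step but getting the linear algebra bookkeeping exactly right: correctly computing $i^!E\cap TN$, recognizing it as $\pi^\sharp(\on{rad}(B))$ via the double-annihilator identity $\ann(\ann(T_xN))=T_xN$, and then leveraging transversality-injectivity to pass between ``radical zero'' and ``image zero'' without sign or convention slips.
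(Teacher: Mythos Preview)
Your proposal is correct and follows essentially the same approach as the paper. Both arguments unwind the definition of $(i^!E\cap TN)_x$ to the condition that $\mu\in\ann(T_xN)$ with $\pi^\sharp(\mu)\in T_xN$ forces $\mu=0$, and identify this with the nondegeneracy of the form $B$ on the conormal space; you make the use of transversality (injectivity of $\pi^\sharp$ on $\ann(T_xN)$) and the identification of $\ann(T_xN)\cap(\pi^\sharp)^{-1}(T_xN)$ with $\on{rad}(B)$ more explicit than the paper does, and you also spell out the second assertion via the general fact about untwisted Dirac structures transverse to $TN$, which the paper leaves implicit.
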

\begin{proof}
By definition, $i^!E$ consists of all $v'+\mu'\in \T N$ such that
there exists $v+\mu\in \on{Gr}(\pi)$ with $v=Ti(v')$ and
$\mu'=i^*\mu$. Hence, $i^!E\cap TN=0$ holds if and only if the
conditions $i^*\mu=0$ and $\pi^\sharp(\mu)\in TN$ imply
that $\mu=0$.
But this is exactly the condition  $\pi^\sharp(\on{ann}(TN))\cap TN=0$ for a cosymplectic transversal.
\end{proof}
Suppose $N\subset M$ is a cosymplectic transversal.  Choose a 1-form $\alpha\in \Om^1(M)$ with $\alpha|_N=0$, such that the splitting given by the normal derivative $\d^N\alpha\colon \nu_N\to T^*M|_N$ coincides with the given inclusion of $\nu_N\cong \nu_N^*$. Then $X=\pi^\sharp(\alpha)$  has linearization equal to the
Euler-vector field on $\nu_N$. Multiplying $\alpha$ by a suitable bump function, we
 may arrange that $X=\pi^\sharp(\alpha)$ is complete. According to Theorem \ref{th:dirac}, the section $\epsilon=X+\alpha\in \Gamma(\T M)$  gives a local model
\begin{equation}\label{eq:poimodel}
(p^!\on{Gr}(\pi_N))^{\omega}\cong  \on{Gr}(\pi)|_U.
\end{equation}
Here $\psi\colon \nu_N\to U\subset M$ is the tubular neighborhood embedding defined
by the Euler-like vector field $X$, and
\begin{equation}\label{eq:omega}
  \omega=\int_0^1 \f{1}{\tau}\kappa_\tau^* \psi^*(\d\alpha)\ \d\tau
=\psi^* \int_0^1  \f{1}{\tau}\lambda_\tau^* (\d\alpha)\ \d\tau\end{equation}
The presymplectic leaves of the Dirac structure $p^!\on{Gr}(\pi_N)$
are the pre-images under $p$ of the symplectic leaves of $(N,\pi_N)$,
with the 2-forms obtained by pullback under $p$.
The Dirac structure $(p^!\on{Gr}(\pi_N))^\omega$ has the same leaves,
but with the pullback of $\omega$ added to the 2-forms on the leaves.
Let us describe the restriction of this 2-form to $TM|_N=TN\oplus \nu_N$.
\begin{lemma}
 The restriction  of $\omega$ to
 to $T\nu|_N$ has kernel $TN$, and equals the given symplectic form on $\nu_N$.
\end{lemma}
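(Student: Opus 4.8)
The plan is to evaluate $\omega$ pointwise along the zero section $N\subset\nu_N$, exploiting the weights of the fibrewise scaling $\kappa_\tau$. Since we are in the Poisson case, $\eta=0$, so by \eqref{eq:omega} we have $\omega=\int_0^1\tfrac1\tau\kappa_\tau^*\beta\,\d\tau$ with $\beta=\psi^*(\d\alpha)=\d(\psi^*\alpha)$; I denote by $\sigma$ the given symplectic form on $\nu_N$. First I would fix $m\in N$. Since $\kappa_\tau$ fixes $m$, its differential acts on $T_m\nu_N=T_mN\oplus\nu_N|_m$ as the identity on $T_mN$ and as multiplication by $\tau$ on $\nu_N|_m$; dually $\kappa_\tau^*$ is the identity on $T^*_mN$ and multiplication by $\tau$ on the conormal summand $\nu_N^*|_m=\ann(T_mN)$. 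I therefore decompose $\wedge^2 T^*_m\nu_N$ according to the number $q\in\{0,1,2\}$ of conormal factors: on the piece of bidegree $(2-q,q)$ the map $\kappa_\tau^*$ acts by $\tau^q$. Writing $\beta_m=\beta_m^{2,0}+\beta_m^{1,1}+\beta_m^{0,2}$ accordingly gives $\tfrac1\tau\kappa_\tau^*\beta_m=\tfrac1\tau\beta_m^{2,0}+\beta_m^{1,1}+\tau\,\beta_m^{0,2}$.

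Next I would compute $\beta_m$ from $\psi^*\alpha$. Because $\alpha|_N=0$, the form $\psi^*\alpha$ vanishes along $N$; applying Cartan's formula to $\d(\psi^*\alpha)$ and using that the surviving terms only see the values of $\psi^*\alpha$ on $N$, one checks $\beta_m^{2,0}=0$ (so the integrand extends smoothly to $\tau=0$) and, for $u\in T_mN$ and $v,v'\in\nu_N|_m$,
\[ \beta_m^{1,1}(u,v)=-\langle\,\d^N\alpha(v),\,u\,\rangle,\qquad \beta_m^{0,2}(v,v')=\langle\,\d^N\alpha(v),\,v'\,\rangle-\langle\,\d^N\alpha(v'),\,v\,\rangle, \]
where I have identified $\d^N(\psi^*\alpha)$ with $\d^N\alpha$ via the tubular neighbourhood embedding (legitimate since $\nu(\psi)=\on{id}$). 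Integrating over $\tau\in[0,1]$ then yields $\omega_m=\beta_m^{1,1}+\tfrac12\beta_m^{0,2}$.

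Finally I would feed in the defining property of $\alpha$: by the choice made just before the lemma, $\d^N\alpha$ is the bundle map $\nu_N\to\ann(TN)\subset T^*M|_N$, $v\mapsto\sigma(v,\cdot)$, induced by $\sigma$ and landing in $\ann(TN)$ by \eqref{eq:split2}. Since $\d^N\alpha(v)\in\ann(TN)$ annihilates $T_mN$, the expression for $\beta_m^{1,1}$ vanishes identically, so $\omega_m$ has neither a purely tangential nor a mixed component; and the pairing $\langle\d^N\alpha(v),v'\rangle$ then descends to $\nu_N$ and equals $\sigma(v,v')$, whence $\beta_m^{0,2}(v,v')=\sigma(v,v')-\sigma(v',v)=2\sigma(v,v')$. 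Therefore $\omega_m=\tfrac12\beta_m^{0,2}=\sigma$ on $\nu_N|_m$, and since $\sigma$ is non-degenerate the kernel of $\omega_m$ is exactly $T_mN$. As $m$ is arbitrary, this proves both assertions.

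The scaling bookkeeping — the weights $1,\tau,\tau^2$ and the resulting factor $\tfrac12$ — is routine; the one step requiring care is the invariant identification of $\beta_m^{1,1}$ and $\beta_m^{0,2}$ with $\d^N\alpha$, together with the observation that the hypothesis $\d^N\alpha(\nu_N)\subset\ann(TN)$ is precisely what both kills the mixed term and makes the pairing defining $\beta^{0,2}$ descend to $\nu_N$. The main thing to get right is the normalisation $\beta^{0,2}=2\sigma$, so that the factor $\tfrac12$ returns $\sigma$ rather than $2\sigma$; a coordinate computation with $\psi^*\alpha=\sum_i a_i\,\d x^i+\sum_j b_j\,\d y^j$, setting $A_{ik}=\partial_{x^k}a_i|_{x=0}$ and $B_{jk}=\partial_{x^k}b_j|_{x=0}$, confirms $B_{jk}=0$ and $\omega_m=\tfrac12\sum_{i,k}A_{ik}\,\d x^k\wedge\d x^i=\sigma$.
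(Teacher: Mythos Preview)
Your proof is correct, and it takes a genuinely different route from the paper's. The paper establishes $TN\subset\ker(\omega|_N)$ by an argument close to yours (observing that $\alpha|_N=0$ with $\d^N\alpha$ landing in $\ann(TN)$ forces $TN\subset\ker(\d\alpha|_N)$, hence the same for each $\tfrac{1}{\tau}\lambda_\tau^*\d\alpha$ and for $\omega$), but then identifies $\omega|_{\nu_N}$ \emph{indirectly}: it invokes the already-proved isomorphism $(p^!\on{Gr}(\pi_N))^\omega\cong\psi^!\on{Gr}(\pi)$ from \eqref{eq:poimodel}, restricts to $N$ where both sides split compatibly with $\T M|_N=\T N\oplus(\nu_N\oplus\nu_N^*)$, and reads off that the $B$-field transform must take the fibre $\nu_N$ to the graph of the symplectic form --- forcing $\omega|_{\nu_N}=\sigma$. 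Your argument instead evaluates the defining integral \eqref{eq:omega} directly, using the weight decomposition under $\kappa_\tau$ to isolate the surviving bidegree-$(0,2)$ piece and then matching it with $2\sigma$ via the hypothesis on $\d^N\alpha$. Your approach is more elementary and self-contained, independent of Theorem~\ref{th:dirac}; the paper's is more conceptual and avoids the scaling and normalisation bookkeeping (in particular the factor $\tfrac12$) by leveraging the Dirac isomorphism.
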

\begin{proof}
Since $\alpha|_N=0$, with normal derivative taking values in $\on{ann}(TN)$, the kernel of $\d\alpha|_N$ contains $TN$. The same is thus true for all $\f{1}{\tau}\lambda_\tau^*\d\alpha$, and hence for the 2-form $\omega$. Due to our choice of $\epsilon$, the differential $T\psi|_N\colon T\nu_N|_N\to TM|_N$
respects the decompositions
\[ TM|_N=TN\oplus \nu_N=T\nu_N|_N\]
where we identify $\pi^\sharp(\on{ann}(TN))=\nu_N$.
Together with the dual decompositions of the cotangent bundles, this means that
$\T\psi$ respects the decompositions
\[ \T M|_N=\T N\oplus (\nu_N\oplus \nu_N^*)=\T\nu_N|_N. \]
The subbundle $\on{Gr}(\pi)|_N\subset \T M|_N$
splits as the direct sum of $\on{Gr}(\pi_N)$ and the graph of the
symplectic form on $\nu_N$. Similarly, $p^!\on{Gr}(\pi_N)|_N\subset
\T\nu_N|_N$ is the direct sum of
$\on{Gr}(\pi_N)$ and $T\nu_N|_N$. Since $\omega|_N$ has kernel $TN$,
the $B$-field transform by $\omega|_N$ preserves this decomposition,
and is trivial on the first summand.
On the other hand, by \eqref{eq:poimodel} it takes
$p^!\on{Gr}(\pi_N)|_N$ to $\psi^!\on{Gr}(\pi)|_N$. This means that
$\omega|_N$ is just the given symplectic structure on $\nu_N\subset
T\nu_N|_N$.
\end{proof}
This allows us to recover the following result.
\begin{theorem}[Frejlich-M\u{a}rcu\c{t} \cite{fre:nor}]
Let $N\subset M$ be a cosymplectic transversal. Choose
 a closed 2-form $\omega\in \Om^2(\nu_N)$ on
the normal bundle, such that $\omega|_N$ has kernel $TN$ and restricts to the given symplectic form on $\nu_N\subset T\nu_N|_N$.
Then, near the zero section of $\nu_N$,
\[ (p^!\on{Gr}(\pi_N))^\omega\]
is the graph of  a Poisson structure, and there exists a tubular neighborhood embedding
$\psi\colon \nu_N\to M$, which is a Poisson map on some neighborhood of $N$.
\end{theorem}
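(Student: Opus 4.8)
The plan is to handle the two assertions in turn, reducing each to material already in place. For the first, I use that a Dirac structure $L\subset\T\nu_N$ relative to the zero $3$-form is the graph of a Poisson bivector exactly when $L\cap T\nu_N=\{0\}$, and that transversality of the two subbundles $L,\,T\nu_N\subset\T\nu_N$, each of rank $\dim M$, is an open condition on $\nu_N$. Thus it is enough to check $(p^!\on{Gr}(\pi_N))^{\omega}\cap T\nu_N=\{0\}$ along the zero section $N$, where only $\omega|_N$ enters. Here I invoke the decomposition from the preceding lemma: over $N$ one has $\T\nu_N|_N=\T N\oplus(\nu_N\oplus\nu_N^*)$, with $p^!\on{Gr}(\pi_N)|_N=\on{Gr}(\pi_N)\oplus\nu_N$, where $\on{Gr}(\pi_N)\subset\T N$ and the second summand is the vertical tangent bundle. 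Since $\ker(\omega|_N)=TN$, the $B$-field transform fixes $\on{Gr}(\pi_N)$ and sends the vertical $\nu_N$ to $\on{Gr}(\sigma)$, the graph of the fibrewise symplectic isomorphism $\sigma\colon\nu_N\xrightarrow{\cong}\nu_N^*$. As $\pi_N$ is Poisson and $\sigma$ is non-degenerate, $\on{Gr}(\pi_N)\cap TN=0$ and $\on{Gr}(\sigma)\cap\nu_N=0$, so the intersection with $T\nu_N|_N$ vanishes; by openness this persists near $N$. Nothing beyond the hypotheses on $\omega|_N$ was used, so the first assertion holds for every admissible $\omega$.

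For the second assertion I first realize the model for the distinguished $2$-form. Following the construction preceding the theorem, choose $\alpha\in\Om^1(M)$ with $\alpha|_N=0$ whose normal derivative $\d^N\alpha$ is the inclusion $\nu_N\cong\on{ann}(TN)$, and, after multiplying by a bump function, arrange that $X=\pi^\sharp(\alpha)$ is Euler-like; let $\psi$ be the associated tubular neighborhood embedding. Theorem \ref{th:dirac} then supplies an isomorphism of Dirac structures $\T\psi\colon(p^!\on{Gr}(\pi_N))^{\omega_0}\to\on{Gr}(\pi)|_U$ with $\omega_0$ the form \eqref{eq:omega}; by the preceding lemma $\omega_0$ meets the hypotheses, so by the first assertion its source is $\on{Gr}(\pi_{\nu_N})$ for a Poisson structure $\pi_{\nu_N}$ defined near $N$. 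An isomorphism of Dirac structures carrying one Poisson graph to another is precisely a Poisson diffeomorphism of the base maps, so $\psi\colon(\nu_N,\pi_{\nu_N})\to(U,\pi)$ is a Poisson map near $N$.

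It remains to see that an arbitrary admissible $\omega$ yields a model diffeomorphic to this one, and this is where I expect the real work. The difference $\beta=\omega-\omega_0$ is closed and vanishes at every point of $N$; since $\nu_N$ retracts onto $N$ through the scalings $\kappa_t$, the relative Poincar\'e lemma writes $\beta=\d b$ with $b|_N=0$. Setting $\on{Gr}(\pi_s)=\on{Gr}(\pi_{\nu_N})^{s\beta}$ for $s\in[0,1]$, each is a Poisson graph on a common neighborhood of $N$ (transversality holds along $N$, where $\beta$ vanishes, hence nearby, uniformly in $s$), and I would run a Moser argument: the gauge transformation by the exact form $\d b$ is generated by the flow $\chi_s$ of the time-dependent vector field $Y_s=\pi_s^\sharp(b)$, which fixes $N$ since $b|_N=0$ and satisfies $(\chi_s)_*\pi_{\nu_N}=\pi_s$; then $\psi\circ\chi_1^{-1}\colon(\nu_N,\pi_\omega)\to(U,\pi)$ is Poisson. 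The \emph{main obstacle} is to keep this isotopy compatible with the strong notion of tubular neighborhood embedding, i.e.\ to guarantee $\nu(\chi_1)=\on{id}$, so that $\nu(\psi\circ\chi_1^{-1})=\nu(\psi)=\on{id}$ by functoriality. This is the delicate point: by Remark \ref{rem:linearapprox}(iii) it amounts to making the linear approximation $\nu(Y_s)$ vanish, which is controlled by the order of vanishing of the primitive $b$ along $N$; exploiting that $\omega$ and $\omega_0$ have identical restriction to $N$, one chooses $b$ vanishing to high enough order that $\d^N Y_s=0$, securing $\nu(\chi_1)=\on{id}$ and completing the argument.
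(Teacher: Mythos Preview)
Your argument is correct and follows the paper's own strategy: prove the theorem for the distinguished form $\omega_0$ of \eqref{eq:omega} via Theorem~\ref{th:dirac} and the preceding lemma, then handle an arbitrary admissible $\omega$ by the Moser method applied to the exact difference $\omega-\omega_0=\d b$. The paper's proof is in fact briefer than yours: it does not isolate the first assertion separately (it follows for general $\omega$ only after the Moser step, since a Poisson diffeomorphism carries Poisson graphs to Poisson graphs), and it does not discuss the compatibility of the Moser isotopy with the strong tubular-neighborhood condition at all.

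The point you flag as the ``main obstacle'' is a legitimate detail the paper leaves implicit, but it is not as delicate as you suggest. Because both $\omega$ and $\omega_0$ have kernel $TN$ and agree on the fibers along $N$, one has $\omega|_N=\omega_0|_N$ pointwise, so the difference $\beta=\omega-\omega_0$ vanishes along $N$ as a section of $\wedge^2 T^*\nu_N$. The standard homotopy operator for the retraction $\kappa_t$ then produces a primitive $b=\int_0^1 t^{-1}\kappa_t^*(\iota_{\E}\beta)\,\d t$ which automatically vanishes to \emph{second} order along $N$ (a short coordinate check), hence $Y_s=\pi_s^\sharp(b)$ vanishes to second order and $\nu(Y_s)=0$. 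No special choice of primitive is needed beyond the one the paper already names. So your resolution is right, but you should present it as a routine verification rather than the crux of the proof.
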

\begin{proof}
We have proved the result for a particular $\omega$  (given by \eqref{eq:omega}). For the general case, note that it suffices to consider closed 2-forms defined
on an open neighborhood
$N\subset \nu_N$. Given two 2-form $\om,\om'$ as in the theorem, one has
$\omega'-\omega=\d\beta$, where $\beta$ is a 1-form, with
$\beta|_N=0$. (The homotopy operator for the retraction from $\nu_N$
to $N$ gives a canonical choice for $\beta$.) The Moser method for
Poisson manifolds (as in, e.g., \cite{al:gw}) gives a Poisson isomorphism
between the models over some neighborhood of the zero section.
\end{proof}
\begin{remark}
The argument from \cite{fre:nor} relies on Crainic-M\u{a}rcu\c{t}'s approach \cite{cra:exi} to symplectic realizations via \emph{Poisson sprays} on $T^*M$.
\end{remark}

\begin{remarks}\label{rem:w}
\begin{enumerate}
\item
The 2-form $\omega$ used in  \eqref{eq:poimodel} gives a Poisson structure over all of $\nu_N$, not only near the zero section. An alternative choice of $\omega$ uses the `minimal coupling'
procedure of Sternberg \cite{ste:min} and Weinstein \cite{wei:uni},
depending on the choice of a symplectic connection on the symplectic vector bundle $\nu_N$.
\item\label{it:wb}
If the normal bundle is \emph{trivial}, $\nu_N=N\times \R^{2k}$, we
may use the trivial connection. The normal form then says that a
neighborhood of $N$ in $M$ is Poisson diffeomorphic to a neighborhood
of $N\times\{0\}$ inside the product of Poisson manifolds,
\[ (N\times \R^{2k},\ \pi_N+\pi_0)\]
where $\pi_0=-\sum_i \f{\p}{\p q_i}\wedge \f{\p}{\p p^i}$ is the standard Poisson structure on $\R^{2k}$.
In particular, given any $m\in M$, consider a submanifold $N$
through $m$
such that
$T_mM=T_mN\oplus \pi^\sharp(T^*_mM)$.
Taking $N$ smaller if necessary, this submanifold is a cosymplectic transversal. One
refers to $\pi_N$ as the \emph{transverse Poisson structure}. We
recover the  \emph{Weinstein splitting theorem}
\cite{wei:loc}, identifying $U\subset M$ with an open
neighborhood of the direct product of Poisson manifolds $N\times
\R^{2k}$.
\item \label{it:wc}
Weinstein's uniqueness result \cite[Lemma~2.2]{wei:loc} for the transverse Poisson structure 
can be recovered from the more general result in Section~\ref{subsec:uni3}, applied to $E=\on{Gr}(\pi)$.
Consider a symplectic leaf $S$ and two transversals $N_0$ and $N_1$, such that $N_0\cap S = \{ m_0\}$ and $N_1\cap S=\{m_1\}$.
As discussed at the end of Section~\ref{subsec:uni3}, $N_0,N_1$ extend to a smooth family of transversals $i_s\colon N_s\to M$,
with $N_s\cap S=\{m_s\}$, and a family of isomorphisms $i_0^!E\cong (i_s^!E)^{\vartheta_s}$, with base maps
$\varphi_s\colon N_0\to N_s$,  for a suitable family of closed 2-forms $\vartheta_s\in\Om^2(N_s)$.
See Equation \eqref{eq:gaugetr}.  By the explicit formula \eqref{eq:gammas}, the forms $\vartheta_s$ are exact,
with a smooth family of primitives $\beta_s$. Equivalently, we obtain a family of isomorphisms
\[ (i_0^!E)^{-d\alpha_s}\cong i_s^!E,\]
where $\alpha_s=\varphi_s^* \beta_s$ are the pullbacks with respect to the underlying diffeomorphisms.
Since $i_s^!E=\on{Gr}(\pi_s)$ is the graph of the induced Poisson structure on $N_s$, this shows that the diffeomorphism $\varphi_s$
is a Poisson map, \emph{up to} a gauge transformation of $\pi_0$ by the exact 2-form $-d\alpha_s$. By the Moser argument for Poisson structures (see e.g. \cite[Section 3.3]{al:gw} or \cite[Section 1.3]{al:lin}), the form $\alpha_s$ defines a time dependent vector field on $N_0$ whose flow $\psi_s$ (defined on a sufficiently small neighborhood of $m_0$) intertwines  the gauge transformed Poisson structures.
Its composition with $\varphi_s$ gives a family of  Poisson diffeomorphisms $(N_0,\pi_0)\to (N_s,\pi_s)$.
\end{enumerate}
\end{remarks}

We also recover the functorial properties of the normal form, as in \cite{fre:nor2}.
Let $N\subset M$ and $\alpha$ as above. Suppose that $(M',\pi')$ is another Poisson manifold, and $\Phi\colon M'\to M$ is a Poisson map transverse to $N$.
Then the pre-image $N'=\Phi^{-1}(N)\subset M$ is a Poisson transversal, and the  pull-back $\alpha'=\Phi^*\alpha$ defines an Euler-like vector field $X'={\pi'}^\sharp\alpha'$, and hence a tubular neighborhood embedding $\psi'$. Since $X'\sim_{\Phi} X$,  we have that $\psi\circ \nu(\Phi)=\Phi\circ \psi'$. Since $\Phi$ is a Poisson map and since $\psi,\psi'$ are Poisson diffeomorphisms onto their images, it is immediate that the map $\nu(\Phi)$
between models  is Poisson. Equivalently, this follows because the 2-forms
are related by $\omega'=\nu(\Phi)^*\omega$.

\section{Generalized complex manifolds}\label{sec:gcs}
Let $\T M$ be equipped with the Courant bracket for the zero 3-form
$\eta=0$. Following
Hitchin \cite{hi:gen} and  Gualtieri \cite{gua:ge, gua:ge1}, one
defines a \emph{generalized complex structure} on $M$ to be a vector
bundle automorphism $\mathbb{J}\in \on{Aut}(\T M)$ with
$\mathbb{J}^2=-\on{id}$, such that $\mathbb{J}$ is orthogonal
(preserves the metric) and such that its $+\sqrt{-1}$ eigenbundle
$E=\ker(\mathbb{J}-\sqrt{-1}\on{id})\subset \T_\C M$ is a Dirac
structure for the complexified Courant bracket and metric. Conversely,
a generalized complex structure may be regarded as a complex Dirac
structure $E\subset \T_\C M$ such that  $E\cap \ol{E}=0$. An ordinary
complex structure $J$ on $TM$ defines a
generalized complex structure of \emph{complex type}, where $\mathbb{J}=J\oplus (J^{-1})^*$. At the opposite
extreme, any symplectic form $\omega$ on $M$ defines a generalized
complex structure of \emph{symplectic type}, with
$E=\on{Gr}(\sqrt{-1}\omega)$,
the graph of the  imaginary 2-form $\sqrt{-1}\omega$. If $\gamma \in\Om^2(M)$ is any
closed real 2-form, and
$E\subset \T_\C M$ is a generalized complex structure, then the
$B$-field transform
$\ca{R}_\gamma(E)$ is again a  generalized complex structure.

Any generalized complex structure $\mathbb{J}$ determines a Poisson
structure $\pi$ on $M$, by
\[ \pi^\sharp(\mu)=\a(\mathbb{J}\mu)\]
for all $\mu\in T^*M\subset \T M$.
See \cite[Section 3.4]{gua:ge1}.
This Poisson structure satisfies
\[ \on{ran}(\pi^\sharp)_\C=\a(E)\cap \a(\ol{E}).\]
If $\mathbb{J}$ is of complex type, then $\pi=0$, while
for $\mathbb{J}$
of symplectic type the Poisson structure is inverse to the given
symplectic form.

\begin{lemma}
Suppose that $i\colon N\hra M$ is a cosymplectic transversal with respect to $\pi$. Then $i^!E$ defines a generalized complex structure on $N$.
\end{lemma}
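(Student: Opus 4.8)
The plan is to show that the complex Dirac structure $i^!E \subset \T_\C N$ satisfies the defining condition for a generalized complex structure, namely $i^!E \cap \ol{i^!E} = 0$. Since $i^!E$ is automatically a complex Dirac structure (as the pull-back of a Dirac structure under a map transverse to the anchor, applied in the complexified setting), the only point requiring proof is this transversality-type condition on the $\pm\sqrt{-1}$ eigenbundles.

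First I would record the explicit description of the pull-back: for $v'+\mu' \in \T_\C N$, one has $v'+\mu' \in i^!E$ precisely when there exists $v+\mu \in E$ with $v = Ti(v')$ and $\mu' = i^*\mu$. Taking complex conjugates and using that $\ol{E}$ is the $-\sqrt{-1}$ eigenbundle, the same description gives $\ol{i^!E} = i^!\ol{E}$. Thus an element of the intersection $i^!E \cap \ol{i^!E}$ is represented by a vector $v' \in T_\C N$ that is the image (under $Ti$, i.e.\ the $N$-tangent part) of vectors lying in both $\a(E)$ and $\a(\ol{E})$, together with matching conormal data. The key algebraic fact from the generalized complex setting is that $\on{ran}(\pi^\sharp)_\C = \a(E) \cap \a(\ol{E})$, so the relevant vector directions are exactly those in the complexified range of $\pi^\sharp$.

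The heart of the argument is then to translate the condition $i^!E \cap \ol{i^!E} = 0$ into a statement about $\pi$ and $N$, and to invoke the cosymplectic hypothesis. I would argue that if $v'+\mu'$ lies in the intersection, then $v = Ti(v')$ lies in $\a(E)\cap\a(\ol{E}) = \on{ran}(\pi^\sharp)_\C$, so $v = \pi^\sharp(\xi)$ for some $\xi \in (T^*M)_\C$; meanwhile the conormal compatibility forces $\xi$ (after suitable adjustment) to annihilate $TN$, i.e.\ $\xi \in \on{ann}(TN)_\C$. But the cosymplectic condition for $N$ says precisely that $\pi^\sharp|_{\on{ann}(TN)}$ is injective with range transverse to $TN$ (the decomposition $TM|_N = \pi^\sharp(\on{ann}(TN)) \oplus TN$ from \eqref{eq:split2a}), and $v = Ti(v')$ is tangent to $N$; hence $v \in \pi^\sharp(\on{ann}(TN))_\C \cap (TN)_\C = 0$, forcing $v'=0$ and then $\mu'=0$. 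This mirrors exactly the real-coefficient computation in the Poisson lemma above showing $i^!\on{Gr}(\pi) \cap TN = 0$ under the cosymplectic assumption, now run over $\C$ and combined with the eigenbundle identity.

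The main obstacle I anticipate is the careful bookkeeping of the conormal (cotangent) data: establishing $i^!E \cap \ol{i^!E}=0$ requires not just that the vector parts vanish but that the covector parts are genuinely determined, since an element of $\T_\C N$ carries both. I would handle this by noting that once the vector part $v'$ is shown to vanish, the element $v'+\mu'$ reduces to a purely cotangent element $\mu'$ of $i^!E \cap \ol{i^!E}$; because $E$ is a generalized complex structure one has $E \cap \ol{E} = 0$ in $\T_\C M$, and the cotangent components pull back faithfully along $i^*$ on $\on{ann}(TN)_\C$, so $\mu'$ must vanish as well. Assembling these pieces gives $i^!E \cap \ol{i^!E} = 0$, which is the required condition, so $i^!E$ defines a generalized complex structure on $N$.
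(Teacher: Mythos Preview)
Your overall strategy---show $i^!E \cap \ol{i^!E} = 0$---is correct and matches the paper's, but the execution has a real gap at the crucial step. You claim that from $v \in \a(E)\cap\a(\ol{E}) = \on{ran}(\pi^\sharp)_\C$ and ``conormal compatibility'' the preimage $\xi$ can be adjusted to lie in $\on{ann}(TN)_\C$. This is not established, and knowing only $v \in (TN)_\C \cap \on{ran}(\pi^\sharp)_\C$ does \emph{not} force $v=0$: the cosymplectic condition gives $TN \cap \pi^\sharp(\on{ann}(TN)) = 0$, whereas $TN \cap \on{ran}(\pi^\sharp)$ is typically nonzero (it contains $\on{ran}(\pi_N^\sharp)$). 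You must exhibit a \emph{specific} conormal preimage of $v$, and your ``suitable adjustment'' does not supply one. Your final paragraph is also off: $i^*$ vanishes on $\on{ann}(TN)$, so the claimed faithfulness there is the opposite of what actually holds.

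The paper closes the gap by first reducing to real $v,\mu$, choosing a lift $v+\lambda\in E$, and taking the imaginary part of the anchored eigenvalue equation $\a(\mathbb{J}(v+\lambda))=\sqrt{-1}\,v$ to obtain $\pi^\sharp(\on{Im}(\lambda))=v$ with $\on{Im}(\lambda)\in\on{ann}(TN)$ (since $i^*\lambda=\mu$ is real); cosymplecticity then forces $\on{Im}(\lambda)=0$, so $v+\lambda$ is real and hence lies in $E\cap\ol{E}=0$. A variant closer to your outline also works: take lifts $v+\lambda\in E$ and $v+\lambda'\in\ol{E}$ with $i^*\lambda=i^*\lambda'=\mu'$; anchoring the two eigenvalue equations and subtracting gives $\pi^\sharp(\lambda-\lambda')=2\sqrt{-1}\,v$ with $\lambda-\lambda'\in\on{ann}(TN)_\C$, whence $v=0$ and $\lambda=\lambda'$, and then $(0,\lambda)\in E\cap\ol{E}=0$ yields $\mu'=i^*\lambda=0$.
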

\begin{proof}
Suppose $v+\mu\in \T M$ lies in the intersection $i^!E\cap \ol{i^!E}$. We want to show that $v=0$ and $\mu=0$.
By treating real and imaginary parts separately, we may assume that
$v=\ol{v}$ and $\mu=\ol{\mu}$.
By definition of $i^!E$, we have that $v\in TS$, and there exists
$\lambda\in T^*_\C M$ with $v+\lambda\in E$ and $i^*\lambda=\mu$.
Taking the imaginary part of
\[ \a(\mathbb{J}(v+\lambda))
=\a(\sqrt{-1}(v+\lambda))=\sqrt{-1} v\]
we see that $\pi^\sharp(\on{Im}(\lambda))=v\in TN$. On the other hand,
taking the imaginary part of $i^*\lambda=\mu$ we get
$i^*\on{Im}(\lambda)=0$, hence $\on{Im}(\lambda)\in \on{ann}(TN)$. By
definition of cosymplectic, this shows that $\on{Im}(\lambda)=0$.
We conclude that $v+\lambda\in E$ is real, and therefore zero. Hence
also $v+\mu=0$, which proves that $ i^!E\cap \ol{i^!E}=0$.
\end{proof}

Letting $p\colon \nu_N\to N$ be the bundle projection as before, the pull-back
$p^!i^!E$ does not define a generalized complex structure, since it
contains the real subbundle
$\ker(Tp)$. However, if $\omega$ is a closed 2-form on $\nu_N$ whose
restriction to
$TM|_N=TN\oplus \nu_N$ has kernel $TN$ and coincides with the  given
form $\omega_0$ on the symplectic vector bundle $\nu_N$, then the
$B$-field transform $(p^!i^!E)^{\sqrt{-1}\omega}$ is a generalized
complex structure on some open neighborhood of $N$. Indeed,
\[p^!i^!E|_N=i^!E\oplus \nu_N\subset \T N\oplus (\nu_N\oplus \nu_N^*),\]
and the gauge transform takes this to $i^!E\oplus \on{Gr}(\sqrt{-1}\omega_0)$.

A version of the following result was independently obtained by Bailey-Cavalcanti-Duran \cite[Section 3.2]{bai:blo}.

\begin{theorem}
Let $E\subset \T_\C M$ be a generalized complex structure, and $N\subset M$ a
cosymplectic transversal for the underlying
Poisson structure  $\pi$. Choose a
1-form $\alpha\in \Gamma(TM)$ as in Section \ref{sec:poisson}, defining an
Euler-like vector field $X=\pi^\sharp(\alpha)$ with corresponding tubular neighborhood embedding $\psi\colon \nu_N\to M$. Then $\psi^*E\subset \T \nu_N$ equals, up
to gauge transformation by a closed real 2-form $\gamma\in \Om^2(\nu_N)$
(defined below), the generalized complex structure
\[ (p^!i^!E)^{\sqrt{-1}\omega} \subset \T_\C\nu_N\]
where $\omega\in \Om^2(\nu_N)$ is the closed 2-form \eqref{eq:omega}.
\end{theorem}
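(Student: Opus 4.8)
The plan is to apply the normal form theorem for \emph{complex} Dirac structures, that is, Theorem~\ref{th:dirac} together with Remark~\ref{rem:remark}, to the generalized complex structure $E\subset\T_\C M$ viewed as a complex Dirac structure relative to $\eta=0$. For this I must produce a section $\epsilon\in\Gamma(E)$ with $\epsilon|_N=0$ whose vector field part is \emph{real} and equal to the prescribed Euler-like field $X=\pi^\sharp(\alpha)$, so that the tubular neighborhood embedding furnished by the theorem is the given $\psi$. The natural candidate is
\[
\epsilon=\mathbb{J}\alpha+\sqrt{-1}\,\alpha.
\]
Using $\mathbb{J}^2=-\on{id}$ one computes $\mathbb{J}\epsilon=-\alpha+\sqrt{-1}\,\mathbb{J}\alpha=\sqrt{-1}\,\epsilon$, so $\epsilon$ lies in the $+\sqrt{-1}$ eigenbundle $E$. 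Its anchor is $\a(\epsilon)=\a(\mathbb{J}\alpha)+\sqrt{-1}\,\a(\alpha)=\pi^\sharp(\alpha)=X$, where $\a(\alpha)=0$ and the defining relation $\pi^\sharp(\mu)=\a(\mathbb{J}\mu)$ are used. Finally $\alpha|_N=0$, and since $\mathbb{J}$ is a real bundle map also $(\mathbb{J}\alpha)|_N=0$, so $\epsilon|_N=0$.

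Next I would run the normal form. Write $\mathbb{J}\alpha=X+\mu$, with covector part $\mu\in\Om^1(M)$; then the $1$-form part of $\epsilon$ is $\beta=\sqrt{-1}\,\alpha+\mu$. Because $\mathbb{J}$ is a real operator and $\alpha$ is real, $\mathbb{J}\alpha$ is a real section of $\T M$, so $\mu$ is a \emph{real} $1$-form. By Remark~\ref{rem:remark} the hypotheses of Theorem~\ref{th:dirac} hold for $\epsilon$ (its vector part $X$ is real and Euler-like, and $\epsilon|_N=0$), and since $X$ is exactly the field in the statement it determines, by Proposition~\ref{prop:euler}, the given embedding $\psi$. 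The theorem then yields an isomorphism of complex Dirac structures $\T\psi\colon(p^!i^!E)^{\omega'}\to E|_U$, i.e. $\psi^*E=(p^!i^!E)^{\omega'}$, with
\[
\omega'=\int_0^1\f{1}{\tau}\,\kappa_\tau^*\psi^*(\d\beta)\,\d\tau,
\]
the term $\iota_X\eta$ being absent since $\eta=0$.

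It remains to split off the `symplectic' part of $\omega'$. Since $\d\beta=\sqrt{-1}\,\d\alpha+\d\mu$,
\[
\omega'=\sqrt{-1}\int_0^1\f{1}{\tau}\,\kappa_\tau^*\psi^*(\d\alpha)\,\d\tau+\int_0^1\f{1}{\tau}\,\kappa_\tau^*\psi^*(\d\mu)\,\d\tau=\sqrt{-1}\,\omega+\gamma,
\]
where $\omega$ is precisely the $2$-form \eqref{eq:omega} and $\gamma:=\int_0^1\f{1}{\tau}\,\kappa_\tau^*\psi^*(\d\mu)\,\d\tau$. The form $\gamma$ is well-defined, its integrand extending smoothly to $\tau=0$ as a summand of the well-defined $\omega'$; it is \emph{real} because $\mu$ is real; and it is closed, since $\d$ commutes with $\kappa_\tau^*$, $\psi^*$ and the $\tau$-integral, giving $\d\gamma=\int_0^1\f{1}{\tau}\,\kappa_\tau^*\psi^*(\d\d\mu)\,\d\tau=0$. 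Using additivity of $B$-field transforms, $\ca{R}_{\sqrt{-1}\,\omega+\gamma}=\ca{R}_{\gamma}\circ\ca{R}_{\sqrt{-1}\,\omega}$, we conclude
\[
\psi^*E=(p^!i^!E)^{\sqrt{-1}\,\omega+\gamma}=\big((p^!i^!E)^{\sqrt{-1}\,\omega}\big)^{\gamma},
\]
which is exactly the assertion that $\psi^*E$ agrees with the generalized complex structure $(p^!i^!E)^{\sqrt{-1}\,\omega}$ up to gauge transformation by the closed real $2$-form $\gamma$.

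The one place requiring genuine care is the choice of $\epsilon$: its vector part must be the \emph{real} Euler-like field $X$ of the prescribed tubular neighborhood, which forces $\epsilon=\mathbb{J}\alpha+\sqrt{-1}\,\alpha$, and one must then check that the remainder $\gamma$ separating $\omega'$ from $\sqrt{-1}\,\omega$ is an honest real closed $2$-form, so that the corresponding $B$-field transform preserves the generalized complex condition. Both facts rest on $\mathbb{J}$ being a real operator, which makes $\mathbb{J}\alpha$ real and hence $\mu=\mathbb{J}\alpha-X$, and therefore $\gamma$, real; closedness and the smooth extension to $\tau=0$ are inherited from the properties of $\omega'$ already established in Theorem~\ref{th:dirac}.
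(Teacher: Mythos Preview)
Your proof is correct and follows essentially the same approach as the paper: choose $\epsilon=(\mathbb{J}+\sqrt{-1}\,\on{id})\alpha$, check it lies in $E$ with real vector part $X=\pi^\sharp(\alpha)$, apply the complex version of Theorem~\ref{th:dirac}, and split the resulting complex $2$-form into $\sqrt{-1}\,\omega$ plus a real closed form. Your $\mu$ is exactly the paper's $\beta$ (the covector part of $\mathbb{J}\alpha$), and your $\gamma$ coincides with the paper's.
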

\begin{proof}
By the results
for Poisson manifolds (Section \ref{sec:poisson}), the pre-image of $\on{Gr}(\pi)\subset \T M$ under $\T \psi$ is $(p^!\on{Gr}(\pi_N))^\omega\subset \T\nu_N$.
The Euler-like vector field  $X=\pi^\sharp(\alpha)$ lifts to a section of $E$,
given as \[\epsilon:=(\mathbb{J}+\sqrt{-1}\on{id})\alpha
=X+\beta+\sqrt{-1}\alpha,
\]
where the real 1-form $\beta$ is defined by this equation. (The
definition of $\epsilon$ implies
that $\mathbb{J}\epsilon=\sqrt{-1}\epsilon$, as well as
$\a(\epsilon)=\a(\mathbb{J} \alpha)=\pi^\sharp(\alpha)=X$.) Let
$\gamma\in \Om^2(\nu_N)$ be the real 2-form defined similarly to $\omega$
(see Equation \eqref{eq:omega}), but with $\alpha$ replaced by
$\beta$:
\[ \gamma=\int_0^1 \f{1}{\tau}\kappa_\tau^* \psi^*(\d\beta)\ \d\tau .
\]
 The normal form theorem for Dirac structures (Theorem \ref{th:dirac})
shows that the pre-image of $E$ under the complexified map $T_\C\psi\colon \T_\C\nu_N\to \T_\C M$ is
\[  \psi^!E=(p^!i^!E)^{\gamma+\sqrt{-1}\omega}=((p^!i^!E)^{\sqrt{-1}\omega})^\gamma\]
as subbundles of $\T_\C\nu_N$. Since $E$ is a generalized
complex structure,
$(p^!i^!E)^{\sqrt{-1}\omega}=(\psi^!E)^{-\gamma}$ is one also.
\end{proof}

Suppose that the normal bundle $\nu_N$ is trivial. By the Weinstein
splitting theorem (cf.~ Remark \ref{rem:w} \eqref{it:wb}), one obtains a Poisson isomorphism of a neighborhood
of $N$ in $M$ with a neighborhood of $N\times \{0\}$ inside $N\times
\R^{2k}$, where $N$ has the Poisson structure $\pi_N$, and  $\R^{2k}$
has its standard linear Poisson structure $\pi_0=-\sum_i \f{\p}{\p
q_i}\wedge \f{\p}{\p p^i}$, inverse to the symplectic structure
$\omega_0=\sum_i \d q_i\wedge \d p^i$. Using this model as a starting
point, we may take
$\alpha=\sum_i (q_i \d p^i -p^i \d q_i)$. We then obtain $\d\alpha=2\om_0$,
hence $\f{1}{\tau}\kappa_\tau^*\d\alpha=2\tau \om_0$,  and finally
$\omega=\omega_0$.
In particular, we recover the splitting theorem  for generalized
complex manifolds, due to  Abouzaid-Boyarchenko \cite[Theorem
1.4]{abo:lo}:
\begin{corollary}
Let $M$ be a generalized complex manifold, with underlying Poisson
structure $\pi$,
and $m\in M$. Put  $\P=\on{ran}(\pi_m^\sharp)$, and let
$N\subset M$ be a submanifold containing $m$, and such that
$T_mM=T_mN\oplus \P$. Give
$\P$ the generalized complex structure corresponding to its symplectic
form, and give
$N$ the generalized complex structure with corresponding Dirac
structure $i^!E$.
Up to a $B$-field transform, there is an isomorphism of generalized
complex manifolds from a neighborhood of $m$ in $M$ and in the product
$N\times \P$.
\end{corollary}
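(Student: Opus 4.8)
The plan is to obtain the Corollary as the special case of the preceding Theorem in which $N$ is a small transversal through $m$ and the normal bundle is trivialized. First I would verify that the submanifold $N$, chosen so that $T_mM=T_mN\oplus \P$ with $\P=\on{ran}(\pi_m^\sharp)$, is indeed a cosymplectic transversal near $m$: at $m$ this decomposition forces $\pi^\sharp$ to restrict injectively on $\on{ann}(T_mN)$ with image $\P$, so the skew form induced by $\pi$ on $\on{ann}(T_mN)$ is nondegenerate, and after shrinking $N$ this persists on a neighborhood (exactly as in Remark \ref{rem:w}\eqref{it:wb}). This is what makes the Theorem applicable to $E$ along $N$.

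Next I would exploit triviality of the normal bundle. After shrinking $N$, the symplectic vector bundle $\nu_N$ is trivial, so we may pass to the Weinstein model of Remark \ref{rem:w}\eqref{it:wb}: a Poisson isomorphism identifying a neighborhood of $N$ with a neighborhood of $N\times\{0\}$ in $(N\times\R^{2k},\,\pi_N+\pi_0)$. Working in this model I would take the explicit primitive $\alpha=\sum_i(q_i\,\d p^i-p^i\,\d q_i)$ of twice the standard symplectic form, so that $\d\alpha=2\om_0$ and the defining integral \eqref{eq:omega} collapses to $\om=\om_0$. The associated Euler-like vector field $X=\pi^\sharp(\alpha)$ then produces a tubular neighborhood embedding $\psi$ to which the Theorem directly applies (its hypothesis that the vector field part of $\epsilon$ be real is satisfied, since $X=\pi^\sharp(\alpha)$ is real).

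With these choices, the Theorem yields that $\psi^*E$ agrees, up to a $B$-field transform by the real closed 2-form $\gamma$, with $(p^!i^!E)^{\sqrt{-1}\om_0}$. The point is to recognize the latter as a genuine product: over the trivialization $\nu_N=N\times\R^{2k}$ one has $p^!i^!E=i^!E\times T\R^{2k}$, and the gauge transform by $\sqrt{-1}\om_0$ converts the $T\R^{2k}$ factor into $\on{Gr}(\sqrt{-1}\om_0)$, the symplectic-type generalized complex structure on $\P=\R^{2k}$. Hence $(p^!i^!E)^{\sqrt{-1}\om_0}=i^!E\times\on{Gr}(\sqrt{-1}\om_0)$ is precisely the product generalized complex structure on $N\times\P$, with $N$ carrying $i^!E$ and $\P$ carrying its symplectic structure. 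Composing $\T\psi$ with the $B$-field transform by $\gamma$ then delivers the asserted isomorphism, valid up to a $B$-field transform, between a neighborhood of $m$ in $M$ and a neighborhood of $(m,0)$ in $N\times\P$.

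The main obstacle, though it is essentially bookkeeping once the Theorem is in hand, is to confirm that the construction yields the \emph{literal} product rather than merely a fibered object over $N$. This rests on the two identifications above: that the chosen $\alpha$ forces $\om=\om_0$ through the explicit integral \eqref{eq:omega}, and that $B$-field transforming the $T\R^{2k}$ summand by $\sqrt{-1}\om_0$ produces exactly the symplectic-type structure on $\P$. One must also observe that the leftover $\gamma$ is a bona fide real closed 2-form, so transforming by it is a legitimate equivalence of generalized complex structures — which is why the conclusion can only be stated up to a $B$-field transform.
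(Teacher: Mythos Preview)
Your proposal is correct and follows essentially the same route as the paper: the paragraph preceding the Corollary already carries out exactly this argument, passing to the Weinstein model of Remark~\ref{rem:w}\eqref{it:wb}, choosing the same explicit $\alpha=\sum_i(q_i\,\d p^i-p^i\,\d q_i)$, computing $\omega=\omega_0$ via $\tfrac{1}{\tau}\kappa_\tau^*\d\alpha=2\tau\,\omega_0$, and then reading off the product $i^!E\times\on{Gr}(\sqrt{-1}\omega_0)$ from the Theorem. Your additional remarks on why $N$ is cosymplectic near $m$ and why the $B$-field transform of the $T\R^{2k}$ factor yields the symplectic-type structure are accurate elaborations of steps the paper leaves implicit.
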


\begin{appendix}

\section{Normal bundles of vector subbundles}\label{app:tnu}
For a vector bundle $\pr: E\to M$, with a vector subbundle $F\to N$
along a submanifold $N\subseteq M$, the normal bundle $\nu(E,F)$ is a vector bundle
over $\nu(M,N)$ with projection $\nu(\pr): \nu(E,F)\to \nu(M,N)$.
In fact, $\nu(E,F)$ fits into a {\em double vector bundle}
\[
\xymatrix{
{\nu(E,F)} \ar[r]^{}\ar[d]_{\nu(\pr)}    & F \ar[d]\\
{\nu(M,N)}\ar[r]_{} & N, }
\]
that is, in this diagram both horizontal and vertical arrows are vector-bundle
projections, and the horizontal and vertical scalar multiplications
commute (see \cite{gra:hig} for this characterization of double vector bundles).
In particular, for a submanifold $N\subseteq M$, we have a double vector bundle
\[
\xymatrix{
{\nu(TM,TN)} \ar[r]^{}\ar[d]_{}    & TN \ar[d]\\
{\nu(M,N)}\ar[r]_{} & N. }
\]
The tangent bundle to $\nu(M,N)$ also gives rise to a double vector bundle, the so-called {\em tangent prolongation} of $p:\nu(M,N)\to N$:
\[
\xymatrix{
{T\nu(M,N)} \ar[r]^{\;Tp}\ar[d]_{}    & TN \ar[d]\\
{\nu(M,N)}\ar[r]_{} & N. }
\]
\begin{lemma}\label{lem:TnuT}
There is a natural map $\nu(TM,TN)\to T\nu(M,N)$ which is a vector-bundle isomorphism with respect to the vector-bundle structures over $TN$ and $\nu(M,N)$, covering the identity map in each case.
In particular, $\nu(TM,TN)$ and $T\nu(M,N)$ are identified as double vector bundles.
\end{lemma}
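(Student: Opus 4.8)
The plan is to exhibit the isomorphism as a \emph{single natural map} built from two standard ingredients: the canonical involution (flip) $J\colon TTM\to TTM$ and the quotient vector bundle map $q\colon TM|_N\to \nu(M,N)$, and then to read off all the required properties from this description. Write $\pi_M\colon TM\to M$ for the tangent projection and $\pi_{TM}\colon TTM\to TM$ for the tangent projection of $TM$, so that $\pi_{TM}\circ J=T\pi_M$ and $T\pi_M\circ J=\pi_{TM}$. Recall that $\nu(TM,TN)=\pi_{TM}^{-1}(TN)/TTN$, while $TM|_N=\pi_M^{-1}(N)$ and $\nu(M,N)=TM|_N/TN$, with $q$ the bundle projection.

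First I would check that $J$ carries $\pi_{TM}^{-1}(TN)$ into $T(TM|_N)$. Indeed, if $w\in\pi_{TM}^{-1}(TN)$, its base point $\pi_{TM}(w)=\eta$ lies in $TN$, so $w$ can be represented by a square $\gamma(s,t)$ in $M$ for which $t\mapsto\gamma(0,t)$ is a curve in $N$ with velocity $\eta$. Then $Jw$ is the velocity at $t=0$ of the curve $t\mapsto \chi_t:=\tfrac{\partial}{\partial s}\big|_{s=0}\gamma(s,t)$, which lies in $TM|_N$ because $\gamma(0,t)\in N$; hence $Jw\in T(TM|_N)$. I then set
\[
\Theta:=Tq\circ J\colon \pi_{TM}^{-1}(TN)\longrightarrow T\nu(M,N),
\]
and observe that $\Theta$ annihilates $TTN$: for $w\in TTN$ the square may be chosen entirely inside $N$, so $\chi_t\in TN$ and $q(\chi_t)=0$, giving $\Theta(w)=0$. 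Thus $\Theta$ descends to the desired natural map $\nu(TM,TN)\to T\nu(M,N)$, which unwinds to $[w]\mapsto \tfrac{d}{dt}\big|_{t=0}\big[\tfrac{\partial}{\partial s}\big|_{s=0}\gamma(s,t)\big]$.

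Next I would verify that $\Theta$ intertwines the two pairs of vector bundle structures, covers the identity on each base, and is bijective. Over $TN$: since $p\circ q\colon TM|_N\to N$ is the restriction of $\pi_M$, one has $Tp\circ Tq=T(p\circ q)$, and $T(p\circ q)(Jw)=\tfrac{d}{dt}\big|_{0}\gamma(0,t)=\eta=\pi_{TM}(w)$, which is exactly the $TN$-projection of $[w]\in\nu(TM,TN)$ induced by $\pi_{TM}$. Over $\nu(M,N)$: since $q$ is smooth and the footpoint of $Jw$ in $TM|_N$ is $\pi_{TM}(Jw)=T\pi_M(w)$, we get $\pi_{\nu_N}(\Theta[w])=q(T\pi_M(w))=[T\pi_M(w)]$, which is the value $\nu(\pi_M)([w])$ of the other projection. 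So $\Theta$ covers the identity on both $TN$ and $\nu(M,N)$. For linearity and bijectivity I would pass to an adapted chart $(x^i,y^j)$ on $M$ with $N=\{x=0\}$: writing $((y,v),(x,u))$ for the base/fibre coordinates on $\nu(TM,TN)$ (the transverse directions to $TN=\{x=0,u=0\}$) and $(y,x,\dot y,\dot x)$ on $T\nu(M,N)$, the flip is the coordinate swap $(u,v)\leftrightarrow(\dot x,\dot y)$ and a short computation gives $\Theta\big((y,v),(x,u)\big)=(y,x,v,u)$, visibly a linear isomorphism on each fibre. Hence $\Theta$ is a fibrewise isomorphism covering the identity on $N$, so it is a diffeomorphism and an isomorphism of double vector bundles.

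The main obstacle is purely bookkeeping: one must establish the non-obvious inclusion $J\big(\pi_{TM}^{-1}(TN)\big)\subseteq T(TM|_N)$, and keep straight which of the two structures on each side corresponds to which — for it is precisely the flip $J$ that \emph{interchanges} the $\pi_{TM}$- and $T\pi_M$-structures while nevertheless producing a map that covers the identity on each of $TN$ and $\nu(M,N)$. Once $\Theta$ is written as $Tq\circ J$, naturality in the pair $(M,N)$ — and hence the commuting square stated just after \eqref{eq:tnu} — is immediate from the naturality of both $J$ and the quotient maps $q$.
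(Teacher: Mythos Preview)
Your proof is correct and follows essentially the same route as the paper's: both build the isomorphism from the canonical involution $J\colon TTM\to TTM$, using that $J$ interchanges the submanifolds $T(TM|_N)=(T\pi_M)^{-1}(TN)$ and $(TTM)|_{TN}=\pi_{TM}^{-1}(TN)$ while preserving $TTN$, and then pass to quotients. The paper phrases this as ``$J$ restricts to an isomorphism of sub-double vector bundles and preserves $TTN$, hence descends to the quotient double vector bundles,'' whereas you make the descent explicit by writing the map as $Tq\circ J$ and add a local-coordinate verification of bijectivity and linearity; the content is the same.
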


\begin{proof}
Let $\pr_M: TM\to M$ denote the tangent bundle to $M$. The iterated tangent bundle $T(TM)$ is a double vector bundle
\[  \xymatrixcolsep{30pt}
\xymatrixrowsep{24pt}
\xymatrix{ TTM\ar[r]^{T\pr_M}\ar[d]_{\pr_{TM}} & TM\ar[d]\\ TM\ar[r] & M.
}
\]
There is a canonical involution $J: TTM\to TTM$ satisfying $(T\pr_M) \circ J = \pr_{TM}$ and which interchanges the vertical and horizontal vector bundle structures. See e.g. \cite[Section 9.6]{mac:gen}.

For a submanifold $N\subseteq M$, the submanifolds $T(TM|_N)$ and $(TTM)|_{TN}$ of $TTM$ are both sub-double vector bundles:
\[\xymatrixcolsep{38pt}
\xymatrixrowsep{24pt}
\xymatrix{T(TM|_N)\ar[r]^{T\pr_M}\ar[d]_{\pr_{TM}} & TN\ar[d]\\ TM|_N\ar[r] & N,
}\  \ \ \ \ \ \ \ \
\xymatrix{(TTM)|_{TN}\ar[r]^{T\pr_M}\ar[d]_{\pr_{TM}} & TM|_N\ar[d]\\ TN\ar[r] & N.
}
\]
The involution $J: TTM\to TTM$ restricts to an isomorphism $T(TM|_N)\to (TTM)|_{TN}$ between these two double vector bundles. This map also restricts to the canonical involution of $TTN$, viewed as submanifolds of
 $T(TM|_N)$ and $(TTM)|_{TN}$. In this way, $J$ gives rise to an isomorphism between the two double vector bundles
 \[
 \xymatrixcolsep{38pt}
\xymatrixrowsep{24pt}
\xymatrix{T(\nu(M,N))\ar[r]^{T p}\ar[d]_{\pr_{\nu(M,N)}} & TN\ar[d]\\ \nu(M,N)\ar[r] & N,
}\  \ \ \ \ \ \ \ \
\xymatrix{\nu(TM,TN)\ar[r]^{p}\ar[d]_{\nu(p)} & \nu(M,N) \ar[d]\\ TN\ar[r] & N,
}
 \]
as desired.

\end{proof}

\end{appendix}

\bibliographystyle{amsplain}


\begin{thebibliography}{10}
\bibitem{abo:lo}
M.~Abouzaid and M.~Boyarchenko, \emph{Local structure of generalized complex
  manifolds}, J. Symplectic Geom. \textbf{4} (2006), no.~1, 43--62.

\bibitem{ab:ma}
R.~Abraham, J.~Marsden, and T.~Ratiu, \emph{Manifolds, tensor analysis and
  applications}, Addison-Wesley, Reading, 1983.

\bibitem{al:lin}
A.~Alekseev and E.~Meinrenken, \emph{{Linearization of Poisson Lie group
  structures}}, J. Symplectic Geom. \textbf{14} (2016), no.~1, 227--267.

\bibitem{al:gw}
\bysame, \emph{Ginzburg-{W}einstein via {G}elfand-{Z}eitlin}, J. Differential
  Geom. \textbf{76} (2007), no.~1, 1--34.

\bibitem{and:hol}
I.~Androulidakis and G.~Skandalis, \emph{The holonomy groupoid of a singular
  foliation}, J. Reine Angew. Math. \textbf{626} (2009), 1--37.

\bibitem{bai:blo}
M.~Bailey, G.~Cavalcanti, and J.~van~der Leer~Duran, \emph{{Blow-ups in
  generalized complex geometry}}, Preprint, 2016, arXiv:1602.02076.

\bibitem{bal:not}
R.~Balan, \emph{A note about integrability of distributions with
  singularities}, Boll. Un. Mat. Ital. A (7) \textbf{8} (1994), no.~3,
  335--344.

\bibitem{bas:lin}
J.~Basto-Goncalves, \emph{Linearization of resonant vector fields}, Trans.
  Amer. Math. Soc. \textbf{362} (2010), no.~12, 6457--6476.


\bibitem{blo:rem}
C.~Blohmann, \emph{Removable presymplectic singularities and the local
  splitting of {D}irac structures}, 2014, arXiv:1410.5298. To appear in Int. Math. Res. Notices.

\bibitem{bur:vec}
H.~Bursztyn,  A.~Cabrera and  M.~ del Hoyo, \emph{{Vector bundles over Lie groupoids and algebroids}}, Adv. in Math. \textbf{290} (2016), 163--207.

\bibitem{cal:poi}
I.~Calvo and F.~Falceto, \emph{{Poisson reduction and branes in Poisson sigma
  models}}, Lett.~Math.~Phys. \textbf{70} (2004), 231---247.

\bibitem{cat:coi}
A.~S. Cattaneo and M.~Zambon, \emph{Coisotropic embeddings in {P}oisson
  manifolds}, Trans. Amer. Math. Soc. \textbf{361} (2009), no.~7, 3721--3746.

\bibitem{cou:di}
T.~Courant, \emph{Dirac manifolds}, Trans.~Amer.~Math.~Soc. \textbf{319}
  (1990), no.~2, 631--661.

\bibitem{cra:exi}
M.~Crainic and I.~Marcut, \emph{On the existence of symplectic realizations},
  J. Symplectic Geom. \textbf{9} (2011), no.~4, 435--444.

\bibitem{dra:smo}
L.~Drager, J.~Lee, E.~Park, and K.~Richardson, \emph{Smooth distributions are
  finitely generated}, Annals of Global Analysis and Geometry \textbf{41}
  (2012), no.~3, 357--369 (English).

\bibitem{duf:nor}
J.-P. Dufour, \emph{Normal forms for {L}ie algebroids}, Lie algebroids and
  related topics in differential geometry ({W}arsaw, 2000), Banach Center
  Publ., vol.~54, Polish Acad. Sci. Inst. Math., Warsaw, 2001, pp.~35--41.

\bibitem{duf:lo}
J.-P. Dufour and A.~Wade, \emph{On the local structure of {D}irac manifolds},
  Compos. Math. \textbf{144} (2008), no.~3, 774--786.

\bibitem{duf:po}
J.-P. Dufour and N.T. Zung, \emph{Poisson structures and their normal forms},
  Progress in Mathematics, vol. 242, Birkh\"auser Verlag, Basel, 2005.

\bibitem{fer:lie}
R.~Fernandes, \emph{Lie algebroids, holonomy and characteristic classes}, Adv.
  Math. \textbf{170} (2002), no.~1, 119--179.

\bibitem{fre:nor}
P.~Frejlich and I.~Marcut, \emph{The local normal form around {P}oisson
  transversals}, 2013, arXiv1306.6055. To appear in Pacific J. Math.

\bibitem{fre:nor2}
\bysame, \emph{{Normal forms for Poisson maps and symplectic groupoids around
  Poisson transversals}}, 2015, arXiv:1508.05670.

\bibitem{gra:hig}
J.~Grabowski and M.~Rotkiewicz, \emph{Higher vector bundles and multi-graded
  symplectic manifolds}, J. Geom. Phys. \textbf{59} (2009), no.~9, 1285--1305.

\bibitem{gua:ge}
M.~Gualtieri, \emph{{Generalized complex geometry}}, Ph.D. thesis, Oxford,
  2004, arXiv:math.DG/0401221.

\bibitem{gua:ge1}
\bysame, \emph{Generalized complex geometry}, Ann. of Math. (2) \textbf{174}
  (2011), no.~1, 75--123.

\bibitem{gu:gea}
V.~Guillemin and S.~Sternberg, \emph{Geometric asymptotics}, revised ed.,
  Mathematical Surveys and Monographs, vol.~14, Amer.~Math.~Soc., Providence,
  R.~I., 1990.

\bibitem{hi:gen}
N.~Hitchin, \emph{Generalized {C}alabi-{Y}au manifolds}, Q.~J.~Math.
  \textbf{54} (2003), no.~3, 281--308.

\bibitem{hu:ham}
S.~Hu, \emph{Hamiltonian symmetries and reduction in generalized geometry},
  Houston J. Math \textbf{35} (2009), no.~3, 787--811.

\bibitem{lan:dif}
S.~Lang, \emph{{Differential and Riemannian Manifolds}}, vol. 160,
  Springer-Verlag, 1995.

\bibitem{lib:cou}
D.~Li-Bland and E.~Meinrenken, \emph{{C}ourant algebroids and {P}oisson
  geometry}, International Mathematics Research Notices \textbf{11} (2009),
  2106--2145.

\bibitem{mac:gen}
K.~Mackenzie, \emph{General theory of {L}ie groupoids and {L}ie algebroids},
  London Mathematical Society Lecture Note Series, vol. 213, Cambridge
  University Press, Cambridge, 2005.

\bibitem{mir:no}
E.~Miranda and N.~T. Zung, \emph{A note on equivariant normal forms of
  {P}oisson structures}, Math. Res. Lett. \textbf{13} (2006), no.~5-6,
  1001--1012.

\bibitem{roy:co}
D.~Roytenberg, \emph{{Courant algebroids, derived brackets and even symplectic
  supermanifolds}}, Thesis, Berkeley 1999. arXiv:math.DG/9910078.

\bibitem{sev:let}
P.~{\v{S}}evera, \emph{{Letters to Alan Weinstein}},
  http://sophia.dtp.fmph.uniba.sk/~severa/letters/, 1998-2000.

\bibitem{sev:poi}
\bysame, \emph{{Poisson Lie T-duality and Courant algebroids}}, Lett.~ Math.~
  Phys. \textbf{105} (2015), no.~12, 1689--1701.

\bibitem{ste:int}
P.~Stefan, \emph{Integrability of systems of vector fields}, J. London Math.
  Soc. (2) \textbf{21} (1980), no.~3, 544--556.

\bibitem{ste:loc}
S.~Sternberg, \emph{Local contractions and a theorem of {P}oincar\'e}, Amer. J.
  Math. \textbf{79} (1957), 809--824.

\bibitem{ste:str}
\bysame, \emph{On the structure of local homeomorphisms of {E}uclidean
  {$n$}-space. {II}.}, Amer. J. Math. \textbf{80} (1958), 623--631.

\bibitem{ste:min}
\bysame, \emph{On minimal coupling and the symplectic mechanics of a classical
  particle in the presence of a {Y}ang-{M}ills field}, Proc.~ Nat.~ Acad.~
  Sci.~ USA \textbf{74} (1977), 5253--5254.

\bibitem{sus:orb}
H.~Sussmann, \emph{Orbits of families of vector fields and integrability of
  distributions}, Trans. Amer. Math. Soc. \textbf{180} (1973), 171--188.

\bibitem{wei:uni}
A.~Weinstein, \emph{A universal phase space for particles in {Y}ang-{M}ills
  fields}, Lett. Math. Phys. \textbf{2} (1978), 417?--420.

\bibitem{wei:loc}
\bysame, \emph{The local structure of {P}oisson manifolds}, J. Differential
  Geom. \textbf{18} (1983), no.~3, 523--557.

\bibitem{wei:alm}
\bysame, \emph{Almost invariant submanifolds for compact group actions}, J.
  Eur. Math. Soc. (JEMS) \textbf{2} (2000), no.~1, 53--86.


\bibitem{xu:dir}
P.~Xu, \emph{Dirac submanifolds and {P}oisson involutions}, Ann. Sci. \'Ecole
  Norm. Sup. (4) \textbf{36} (2003), no.~3, 403--430.

\end{thebibliography}

\def\cprime{$'$} \def\polhk#1{\setbox0=\hbox{#1}{\ooalign{\hidewidth
  \lower1.5ex\hbox{`}\hidewidth\crcr\unhbox0}}} \def\cprime{$'$}
  \def\cprime{$'$} \def\cprime{$'$} \def\cprime{$'$}
  \def\polhk#1{\setbox0=\hbox{#1}{\ooalign{\hidewidth
  \lower1.5ex\hbox{`}\hidewidth\crcr\unhbox0}}} \def\cprime{$'$}
  \def\cprime{$'$} \def\cprime{$'$} \def\cprime{$'$} \def\cprime{$'$}
\providecommand{\bysame}{\leavevmode\hbox to3em{\hrulefill}\thinspace}
\providecommand{\MR}{\relax\ifhmode\unskip\space\fi MR }
\providecommand{\MRhref}[2]{%
  \href{http://www.ams.org/mathscinet-getitem?mr=#1}{#2}
}
\providecommand{\href}[2]{#2}

\end{document}